\newtheorem{theorem}{Theorem}
\newtheorem{definition}[theorem]{Definition}
\long\def\symbolfootnote[#1]#2{\begingroup
	\def\thefootnote{\fnsymbol{footnote}}\footnote[#1]{#2}\endgroup}
\newcommand{\first}{\mathrm{first}}
\newcommand{\last}{\mathrm{last}}
\newcommand{\col}{\text{Col}}
\newcommand{\la}{\lambda}
\newcommand{\ris}{\mathrm{ris}}
\newcommand{\des}{\mathrm{des}}
\newcommand{\qbinom}[2]{\genfrac{[}{]}{0pt}{}{#1}{#2}_{q}}
\newcommand{\sg}{\sigma}
\newcommand{\cref}[1]{Corollary \ref{corollary:#1}}
\newcommand{\inv}{\mathrm{inv}}
\newcommand{\coinv}{\mathrm{coinv}}
\newcommand{\red}{\mathrm{red}}
\newcommand{\Pmch}{\text{$P$-$\mathrm{mch}$}}
\newcommand{\tmch}{\text{$\tau$-$\mathrm{mch}$}}
\newcommand{\Gmch}{\text{$\Gamma$-$\mathrm{mch}$}}
\newcommand{\fig}[3]
{\begin{figure}[ht]
		\centerline{\scalebox{#1}{\epsfig{file=#2.eps}}}
		\vspace{-1mm}
		\caption{#3}
		\label{fig:#2}
	\end{figure}}
\title{Block patterns in generalized Euler Permutations}
	\author{
		Ran Pan \\
		\small Department of Mathematics\\[-0.8ex]
		\small University of California, San Diego\\[-0.8ex]
		\small La Jolla, CA 92093-0112. USA\\[-0.8ex]
		\small \texttt{ran.pan.math@gmail.com}
		\and
		Jeffrey Brian Remmel \\
		\small Department of Mathematics\\[-0.8ex]
		\small University of California, San Diego\\[-0.8ex]
		\small La Jolla, CA 92093-0112. USA\\[-0.8ex]
		\small \texttt{remmel@math.ucsd.edu}
		\and
		%ls
	}
	\date{\small Submitted: Date 1;  Accepted: Date 2;
		Published: Date 3.\\
		\small MR Subject Classifications: 05A15, 05E05}
\begin{document}
\maketitle

\begin{abstract}
Goulden and Jackson introduced a very powerful method to study the distributions of certain consecutive 
patterns in permutations, words, and other combinatorial 
objects which is now called the cluster method. There are a number of natural 
classes of combinatorial objects which start with either permutations or words 
and add additional restrictions.  These include up-down permutations, 
generalized Euler permutations, words with no consecutive repeated letters, Young tableaux, and non-backtracking random walks. We develop an extension 
of the cluster method which we call the {\em generalized cluster method} to study the distribution of certain consecutive patterns in such restricted combinatorial objects. In this paper, we focus on block patterns in generalized Euler permutations. 
\end{abstract}

\section{Introduction}

Goulden and Jackson \cite{GJ} 
introduced a very powerful method to study the 
distributions of certain consecutive 
patterns in permutations, words, and other combinatorial 
objects which is now called the cluster method. 
There are now numerous applications of the cluster 
method to study consecutive patterns in permutations and words, 
see, for example, \cite{DK,EN,KS,N,NZ}. 

The main goal of this paper is to give an extension of cluster method 
which we call the {\bf generalized cluster method}
 which is suitable for studying 
the distribution of consecutive patterns in permutations and words 
that satisfy additional restrictions. These include various subsets of 
permutations with regular descent patterns such as up-down permutations,   
generalized Euler permutations,  
permutations and words associated with various classes of 
Young tableaux, and various subsets 
of words such as words with no consecutive repeated letters and words 
associated with non-backtracking random walks. Our work was motivated 
by a paper from the second author \cite{R} who studied 
consecutive patterns in up-down permutations.  In particular, \cite{R} 
introduced the notion of generalized maximum packings which is an extension of the notion of maximum packings introduced by Duane and Remmel \cite{DR}. We realized that one could reformulate the results of \cite{DR} in terms of the cluster method and that a similar reformulation of the results \cite{R} led to a natural generalization of the cluster method 
which we call the 
generalized cluster method.

 In this paper, we will illustrate our method by 
studying the distribution 
of certain consecutive patterns permutations in permutations 
which have regular descent patterns. Let $\sg = \sg_1 \ldots \sg_n$ 
be a permutation in the symmetric group $S_n$. Then we let 
\begin{eqnarray*}
Des(\sg) &=& \{i:\sg_i > \sg_{i+1}\}, \ \  \ \des(\sg) =|Des(\sg)|, \\
Ris(\sg) &=& \{i:\sg_i < \sg_{i+1}\}, \ \ \ \ris(\sg) =|Ris(\sg)|, \\ 
\inv(\sg) &=&  \sum_{1 \leq i < j \leq n} \chi(\sg_i > \sg_j) \  \mbox{and} \\ 
\coinv(\sg) &=& \sum_{1 \leq i < j \leq n} \chi(\sg_i < \sg_j),
\end{eqnarray*}
where for any statement $A$, $\chi(A)$ equals 1 if $A$ is true 
and $\chi(A) =0$ if $A$ is false. 

For any $n > 0$, we let 
\begin{align*}
&[n]_{q} = \frac{1-q^n}{1-q} =1+q+ \cdots + q^{n-1}, \\[1ex]
&[n]_{q}! =  [n]_q [n-1]_q \cdots [1]_q, \\[1ex]
&\qbinom{n}{k} = \frac{[n]_q!}{[k]_q![n-k]_q!}, \qquad \text{and} \\[1ex]
&\qbinom{n}{\la_1,\dots,\la_\ell} =
\frac{[n]_q!}{[\la_1]_q!\cdots [\la_\ell]_q!} 
\end{align*}
be the usual $q$-analogues of $n$, $n!$, $\binom{n}{k}$, 
and $\binom{n}{\la_1, \ldots, \la_{\ell}}$, respectively.

Given integers $k \geq 2$ and $i,j,n \geq 0$, we let 
$\mathcal{C}^{i,j,k}_{i+kn+j}$ denote the set of permutations 
$\sigma = \sg_1 \ldots \sg_{i+kn+j}$ in $S_{i+kn+j}$ such that 
$Des(\sg) \subseteq \{i,i+k, \ldots, i+nk\}$ and
$C^{i,j,k}_{i+kn+j}=|\mathcal{C}^{i,j,k}_{i+kn+j}|$.  Thus permutations 
in $\mathcal{C}^{i,j,k}_{i+kn+j}$ start with an increasing block 
of size $i$ followed by $n$ increasing blocks of size $k$ and end  
with an increasing block of size $j$. 

We let 
$\mathcal{E}^{i,j,k}_{i+kn+j}$ denote the set of
permutations  $\sigma \in S_{i+kn+j}$ with
$Des(\sg) = \{i,i+k, \ldots, i+nk\}$ and $E^{i,j,k}_{i+kn+j} = |\mathcal{E}^{i,j,k}_{i+kn+j}|$.  For any $\sg \in S_{i+kn+j}$, let
$$Ris_{i,k}(\sg) = \{s:0 \leq s \leq n \text{ and } \sg_{i+sk} < \sg_{i+sk+1}\}$$ and
$ris_{i,k}(\sg) =|Ris_{i,k}(\sg)|$.  Then $E^{i,j,k}_{i+k n+j}$ is the number of
$\sg \in \mathcal{C}^{i,j,k}_{i+kn+j}$ such that
$Ris_{i,k}(\sg) = \varnothing$. Thus permutations $\sg$ in $\mathcal{E}^{i,j,k}_{i+kn+j}$ 
have the same block structure as permutations in 
$\mathcal{C}^{i,j,k}_{i+kn+j}$, but we require the additional restriction 
that for any two consecutive blocks $B$ and $C$ in $\sg$, the last element 
of block $B$ must be larger than the first element of block $C$.

In the special case where $k=2$, $i=0$, and $j=2$,
$E^{0,2,2}_{2n+2}$ is the number of permutations in $S_{2n+2}$ with
descent set $\{2,4,\dots,2n\}$.  These are the classical
up-down permutations.  Andr\'{e} \cite{Andre1,Andre2}
proved that
\begin{equation*}
1+ \sum_{n\geq 0} \frac{E^{0,2,2}_{2n+2}}{(2n+2)!} t^{2n+2} = \sec t.
\end{equation*}
Similarly, $E^{0,1,2}_{2n+1}$ counts the number of odd up-down permutations and
 Andr\'{e} proved that 
\begin{equation*}
\sum_{n\geq 0} \frac{E^{0,1,2}_{2n+1}}{(2n+1)!} t^{2 n + 1} = \tan t.
\end{equation*}
These numbers are also called the Euler numbers. When $k > 2$,
$E^{0,j,k}_{kn+j}$ are called generalized Euler numbers \cite{LM}.
There are well-known generating functions for $q$-analogues of the
generalized Euler numbers; see Stanley's book \cite{StanBook}, page 148.
Various divisibility properties of the $q$-Euler numbers have been
studied in \cite{AF,AG,Foata} and of the generalized $q$-Euler
numbers in \cite{Gessel,SZ}. Prodinger \cite{P2} also studied
$q$-analogues of the number $E^{1,2,2}_{2n+1}$ and
$E^{1,1,2}_{2n+2}$.

Mendes, Remmel, and Riehl \cite{MenRemRie} computed the generating 
functions 
\begin{equation}
\label{q=1gf}
\sum_{n \geq 0}
\frac{t^{i+kn+j}}{[i+kn+j]_q!}\sum_{\sg \in \mathcal{C}^{i,j,k}_{i+kn+j}}
x^{ris_{i,k}(\sg)} q^{\inv(\sg)}.
\end{equation}
Setting $x=0$ in \eqref{q=1gf} gives the generating function 
\begin{equation}
\label{q=1gf2}
\sum_{n \geq 0}
\frac{t^{i+kn+j}}{[i+kn+j]_q!}\sum_{\sg \in \mathcal{E}^{i,j,k}_{i+kn+j}}
q^{\inv(\sg)}.
\end{equation}

For our purposes, we shall picture a $\sg$ in  
$\mathcal{C}^{i,j,k}_{i+kn+j}$ as an array $F(\sg)$ starting 
with a column of size $i$, followed by $n$ columns of size $k$, 
and ending with a column of size $j$ filled with the permutation 
$\sg$ so that  one recovers $\sg$ by reading the elements in 
each column from bottom to top and reading the columns from left to right. 
This means that in each column, the numbers are increasing when 
read from bottom to top. 
For example, the array associated with the 
permutation $$\sg = 2~5~6~8~9~1~7~10~4~11~12~3$$
in $\mathcal{C}^{2,1,3}_{12}$ is pictured in Figure \ref{fig:Cijk}. 
Elements of $\mathcal{E}^{i,j,k}_{i+kn+j}$ can be viewed as 
restricted arrays where the top element of each column has to 
be bigger than the bottom element in the column immediately to its right. 

\fig{1.20}{Cijk}{The array for an element of 
$\mathcal{C}^{2,1,3}_{12}$.}

More generally,  let $D^{i,j,k}_{i+kn+j}$ 
denote the diagram which consists of a column of height $i$, followed 
by $n$ columns of height $k$, and ending with a column of height $j$. 
We let $(s,t)$ denote the cell which is in the $s^{th}$ column 
reading from left to right, and the $t^{th}$ row reading from 
bottom to top. For example, for the filling 
of $D^{2,1,3}_{12}$ is pictured in Figure \ref{fig:Cijk},  
the number $7$ is in cell $(3,2)$. Let $F^{i,j,k}_{i+kn+j}$ denote 
the set of all fillings of $D^{i,j,k}_{i+kn+j}$ with the 
elements $1, \ldots, i +kn +j$. The word $w(F)$ of any filling 
$F \in  F^{i,j,k}_{i+kn+j}$ is 
the word obtained by reading the columns from bottom to top and 
the columns from left to right. 
We let $\mathcal{P}^{i,j,k}_{i+kn+j}$ denote the set 
of all fillings of  $D^{i,j,k}_{i+kn+j}$ with the elements of $1, \ldots, 
i+kn+j$ such that the elements are increasing reading 
from bottom to top in each column. Thus for any 
$F \in \mathcal{P}^{i,j,k}_{i+kn+j}$, $w(F) \in \mathcal{C}^{i,j,k}_{i+kn+j}$.

In this paper, we will be mostly interested in patterns that occur 
between columns of height $k$ for elements of 
$\mathcal{P}^{i,j,k}_{i+kn+j}$. These types of patterns were first studied 
by Harmse and Remmel \cite{HR} for elements in $\mathcal{P}^{0,0,k}_{nk}$.

If $F$ is any filling of a $k \times n$-rectangle with pairwise distinct positive 
integers, then  
we let $\red(F)$ denote the filling which results from $F$ by replacing 
the $i$-th smallest element of $F$ by $i$. 
For example, Figure \ref{fig:red} we picture  
a filling, $F$, with its corresponding reduced filling, $\red(F)$.

\fig{1.30}{red}{An example of the reduction operation.}

If $F \in \mathcal{F}^{i,j,k}_{i+kn+j}$ and $2 \leq c_1 < \cdots < c_s\leq n+1$, then 
we let $F[c_1,\ldots,c_s]$ be the filling of the $k \times j$ rectangle 
where the elements in column $a$ of $F[c_1,\ldots,c_s]$ equal the elements 
in column $c_a$ in $F$ for $a = 1, \ldots, j$.   We can 
then extend the usual pattern matching definitions from permutations 
to elements of  $\mathcal{F}^{i,j,k}_{i+kn+j}$ as follows. 

\begin{definition}\label{def1} Let $\Gamma$ be a set of  elements of 
$\bigcup_{r \geq 2}\mathcal{F}^{0,0,k}_{kr}$ and 
$F \in \mathcal{F}^{i,j,k}_{i+kn+j}$. 
Then we say
\begin{enumerate}
\item  $\Gamma$ {\bf occurs} in $F$ if there is an $r \geq 1$ and 
$2 \leq i_1 < i_2 < \cdots < i_r \leq n+1$ such that
$\red(F[i_1, \ldots, i_r]) \in \Gamma$,

\item $F$ {\bf avoids} 
$\Gamma$ if there is no occurrence of $\Gamma$ in $F$, and   

\item there is a {\bf $\Gamma$-match  in $F$ starting at
column $i$} if there is an $r \geq 1$ such that \\
$\red(F[i,i+1, \ldots, i+r-1]) \in \Gamma$.
\end{enumerate}
\end{definition}
We let $\Gmch(F)$ denote the number of $\Gamma$-matches in $F$. 
In the case where $\Gamma =\{P\}$ is a singleton, we  
shall simply write $P$ occurs in $F$ ($P$ avoids $F$, $F$ has a $P$-match 
starting at column $i$) for $\Gamma$ occurs in $F$ ($\Gamma$ avoids $F$,
 $F$ has a $\Gamma$-match 
starting at column $i$).
When $i =j =0$ and $k=1$, then $\mathcal{P}^{0,0,1}_n=S_n$, where $S_n$ is the symmetric 
group, and our definitions reduce to the standard definitions that 
have appeared in the pattern matching literature. We note 
that Kitaev, Mansour, and Vella \cite{K1} have studied pattern matching 
in matrices which is a more general setting than the one we are considering 
for $i=j=0$ in this paper.  

We will illustrate our methods by finding generating functions 
for the distribution of such $\Gamma$-matches in various restricted 
arrays in $\mathcal{F}^{i,j,k}_{i+kn+j}$. To illustrate our methods 
in this paper, we will restrict ourselves to studying restrictions that arise 
by specifying some binary relation $\mathscr R$ between pairs of columns 
of integers. Then we let  
$\mathcal{P}^{i,j,k}_{i+kn+j,\mathscr R}$ denote the set of all 
elements $F$ in  $\mathcal{P}^{i,j,k}_{i+kn+j}$ such that for all 
$1 \leq i \leq n+1$, $(F[i],F[i+1]) \in \mathscr R$. For example, consider the following 
relations $\mathscr R$. 
\begin{enumerate}
\item Let $\mathscr R$ be the relation that holds between 
a pair if columns of integers $(C,D)$ if and only if 
the top element of $C$ is greater than the bottom element of 
$D$. Then it is easy to see that  $\mathcal{E}^{i,j,k}_{i+kn+j}$ 
equals $\mathcal{P}^{i,j,k}_{i+kn+j,\mathscr R}$. 

\item Let $\mathscr R$ be the relation that holds between 
a pair of columns of integers $(C,D)$ if and only if 
in the array $CD$, the rows are strictly increasing. 
Then it is easy to see that set of standard tableaux of shape 
$n^k$ equals $\mathcal{P}^{0,0,k}_{kn,\mathscr R}$.

\item Let $\mathscr R$ be the relation that holds between 
a pair of columns of integers $(C,D)$ of integers if and only if 
the bottom element in column $C$ is less than the bottom element of 
column $D$.  It is easy to see that if $i,j \geq 1$ and $k \geq 2$, then 
$\mathcal{P}^{i,j,k}_{i+kn+j,\mathscr R}$ is the set of all elements 
$F \in \mathcal{P}^{i,j,k}_{i+kn+j}$ such that 
$F(1,1) < F(2,1) < \cdots < F(n+2,1)$. 
\end{enumerate}
We should note that our methods can be applied to study more complicated 
restrictions than those that are determined by binary relations 
on columns of integers, but we shall not study such restrictions in this paper.

Our methods can also be applied to study more general 
types of patterns in such fillings. For example, suppose 
that we wanted to find the distribution of the pattern 
$Q=\begin{array}{|c|c|}
 \hline 3 & 4 \\
 \hline 1 & 2 \\
 \hline
 \end{array}$ in the first two rows of fillings 
in $\mathcal{P}^{2,3,4}_{25}$. Then for any $F \in \mathcal{P}^{2,3,4}_{25}$, 
we would let 
$$Q\text{-mch}(F) = |\{i: \red\left( \begin{array}{|c|c|}
 \hline F(i,2) & F(i+1,2) \\
 \hline F(i,1) & F(i+1,1) \\
 \hline
 \end{array} \right) =Q\}|.$$  
For example, for the $F 
\in  \mathcal{P}^{2,3,4}_{25}$ pictured in Figure \ref{fig:P23427}, 
$Q\text{-mch}(F) =3$ as there are $Q$-matches starting in 
columns 1, 3, and 6.

\fig{1.20}{P23427}{$Q$-matches.}

The main goal of the paper is to study the generating 
functions of the distribution of $\Gamma$-matches 
in restricted arrays such at the ones described 
above.  For example, suppose that $\mathscr R$ is some binary relation 
on pairs of columns whose entries strictly increase, reading from 
bottom to top. Suppose that we are given a sequence 
of pairwise disjoint sets of patterns $\Gamma_1, 
\ldots, \Gamma_s$ such that for each $i$, 
$\Gamma_i \subseteq \mathcal{P}^{0,0,k}_{r_ik}$ where $r_i \geq 2$. 
Then we will be interested in computing generating 
functions of the form 
\begin{equation}
G^{i,j,k}_{\Gamma_1, \ldots, \Gamma_s, \mathscr R}(x_1, \ldots, x_s, t) = 
\sum_{n \geq 0} \frac{t^{i+kn+j}}{(i+kn+j)!} 
\sum_{F \in \mathcal{P}^{i,j,k}_{i+kn+j}} \prod_{r=1}^s x_r^{\Gamma_r\text{-}\mathrm{mch}(F)}.
\end{equation}
In the case where $s = 1$ and $\Gamma_1 =\{P\}$, then we shall let 
\begin{equation}
G^{i,j,k}_{P, \mathscr R}(x, t) = 
\sum_{n \geq 0} \frac{t^{i+kn+j}}{(i+kn+j)!} 
\sum_{F \in \mathcal{P}^{i,j,k}_{i+kn+j}} x^{\Pmch(F)}.
\end{equation}

The outline of this paper is as follows, 
In Section 2, we shall recall the cluster method and extend it to 
cover the types of filled arrays described above. We will 
also introduce the generalized cluster method to study generating 
functions of the form $G^{0,0,k}_{P, \mathscr R}(x, t)$. 
In section 3, we shall describe how one can extend the methods 
of Section 2 to study generating functions of the form 
$G^{i,0,k}_{P, \mathscr R}(x, t)$, $G^{0,j,k}_{P, \mathscr R}(x, t)$, 
and $G^{i,j,k}_{P, \mathscr R}(x, t)$ where $i,j > 0$ and $k \geq 2$. 
In section 4, we shall introduce the notion of joint clusters and joint 
generalized clusters to study generating functions 
of the form 
$G^{i,j,k}_{\Gamma_1, \ldots, \Gamma_s, \mathscr R}(x_1, \ldots, x_s, t)$.

\section{Clusters and Generalized Clusters for $\mathcal{P}^{0,0,k}_{kn}$.}

In this section, we shall describe the cluster method and the generalized 
cluster method for elements in $\mathcal{P}^{0,0,k}_{kn}$.

We start by recalling the definition of clusters for 
permutations. Let $\tau \in S_j$ be a permutation. Then 
for any $n \geq 1$, we let $\mathcal{M}S_{n,\tau}$ 
denote the set of all permutations in $\sg \in S_n$ where we have 
marked some of the $\tau$-matches in $\sg$ by placing an 
$x$ at the start of $\tau$-match in $\sg$. Given a $\sg \in \mathcal{M}S_{n,\tau}$, we 
let $m_{\tau}(\sg)$ be the number of marked $\tau$-matches in $\sg$. 
For example, 
suppose that $\tau = 132$ and $\sg = 15478263$. Then there are 
two $\tau$-matches in $\sg$, one starting at position 1 and one starting 
at position 6.  Thus $\sg$ gives rises to four elements 
of $\mathcal{M}S_{8,\tau}$.
$$\begin{array}{cc}
 15478263 &  \overset{x}{1}5478263 \\
 15478\overset{x}{2}63 & \overset{x}{1}5478\overset{x}{2}63
\end{array}$$
A {\bf $\tau$-cluster} is an element of $\sg = \sg_1 \ldots \sg_n \in \mathcal{M}S_{n,\tau}$ such that 
\begin{enumerate}
 \item every $\sg_i$ is an element of a marked 
$\tau$-match in $\sg$ and  
\item any two consecutive marked $\tau$-matches share at 
least one element. 
\end{enumerate}
We let $\mathcal{CM}S_{n,\tau}$ denote the set of all $\tau$-clusters 
in $\mathcal{M}S_{n,\tau}$. 

For each $n \geq 2$, we define  the cluster polynomial 
$$C_{n,\tau}(x) = \sum_{\sg \in \mathcal{CM}S_{n,\tau}} x^{m_{\tau}(\sg)}.$$
By convention, we set $C_{1,\tau}(x) =1$.

We say that a permutation 
$\tau \in S_j$ is {\bf minimal overlapping} if the smallest 
$n$ such that there exists a $\sg \in S_n$ where $\tmch(\sg) =2$ is 
$2j-1$. This means that two consecutive $\tau$-matches in a permutation 
$\sg$ can share at most one element which must be the element at 
the end of the first $\tau$-match and the element which is at the start of 
the second $\tau$-match. In such a situation, the smallest 
$m$ such that there exists a $\sg \in S_m$ such that $\tmch(\sg) =n$ is 
$n(j-1)+1$.  We call elements of $\sg \in S_{n(j-1)+1}$ such 
that $\tmch(\sg) =n$ {\bf maximum packings} of $\tau$.  We let 
$\mathcal{MP}_{n(j-1)+1}$ denote the set of maximum packings for $\tau$ in 
$S_{n(j-1)+1}$ and $mp_{n(j-1)+1,\tau} = |\mathcal{MP}_{n(j-1)+1}|$. 
It is easy 
to see that if $\tau \in S_j$ is minimal overlapping, then the only 
$\tau$-clusters are maximum packings for $\tau$ where the start of each 
$\tau$-match is marked with an $x$. For example, $\tau = 132$ is a minimal overlapping 
 permutation and it is easy to compute the number of 
maximum packings of size $2n+1$ for any $n \geq 1$. That is, 
if $\sg = \sg_1 \ldots \sg_{2n+1}$ is in $\mathcal{MP}_{2n+1,132}$, 
then there must be $132$-matches starting at positions $1,3, 5, \ldots, 2n-1$. 
It easily follows that for each $i =0, \ldots,n-1$, 
$\sg_{2i+1}$ is smaller than $\sg_j$ for 
all $j > 2i+1$. Hence $\sg_1 =1$ and $\sg_3 =2$. We then have 
$2n-1$ choices for $\sg_3$. Hence, it follows that  
$$mp_{2n+1,132} =(2n-1)mp_{2n-1,132} = \prod_{i=0}^{n-1}(2i+1).$$
Thus $C_{2n+1,132}(x) = x^n \prod_{i=0}^{n-1}(2i+1)$ for all $n \geq 1$ 
and $C_{2n,132}(x) =0$ for all $n \geq 1$. 

On the other hand, suppose that $\tau =1234$.  It is easy to see 
that for any $\tau$-cluster in $\mathcal{CM}S_{n,\tau}$ where  
$n \geq 4$, the underlying permutation must be the identity permutation. 
Moreover, if $\sg = \sg_1 \ldots \sg_n \in \mathcal{CM}S_{n,\tau}$, 
then $\sg_1$ must be marked with an $x$ because $\sg_1$ must be 
an element in a marked $\tau$-match and $\sg_{n-3}$ must be marked since 
$\sg_n$ must be an element of a marked $\tau$-match. Thus for 
$n =7$, we are forced to mark 1 and 4, $\displaystyle \overset{x}{1}23\overset{x}{4}567$. 
However we free to mark either 2 or 3 with an $x$. Hence $C_{7,1234}(x) = x^2(1+x)^2.$

Then for example, Elizalde and Noy \cite{EN} proved the following 
theorem which is a simple application of the Goulden-Jackson 
cluster method \cite{GJ1}. 
  
\begin{theorem}\label{thm:GJperm}
 Let $\tau \in S_j$ where $j \geq 2$. Then 
$$1+\sum_{n \geq 1} \frac{t^n}{n!} 
\sum_{\sg \in S_n} x^{\tmch(\sg)} = \frac{1}{1-t - \sum_{n \geq 2} 
\frac{t^n}{n!} C_{n,\tau}(x-1)}.$$
\end{theorem}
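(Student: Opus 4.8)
The plan is to prove Theorem~\ref{thm:GJperm} via the Goulden--Jackson cluster method, following the standard inclusion--exclusion setup. First I would introduce the generating function for permutations with \emph{marked} $\tau$-matches, namely
\[
H(t,x) = 1 + \sum_{n \geq 1} \frac{t^n}{n!} \sum_{\sg \in \mathcal{M}S_{n,\tau}} x^{m_\tau(\sg)}
= 1 + \sum_{n \geq 1} \frac{t^n}{n!} \sum_{\sg \in S_n} (1+x)^{\tmch(\sg)},
\]
where the last equality holds because a permutation $\sg$ with $\tmch(\sg)=m$ gives rise to exactly $2^m$ elements of $\mathcal{M}S_{n,\tau}$, one for each subset of the $\tau$-matches chosen to be marked, and summing $x^{|S|}$ over subsets $S$ of an $m$-element set gives $(1+x)^m$. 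Replacing $x$ by $x-1$ in this identity converts $(1+x)^{\tmch(\sg)}$ into $x^{\tmch(\sg)}$, so it suffices to show that $H(t,x) = \frac{1}{1-t-\sum_{n\geq 2}\frac{t^n}{n!}C_{n,\tau}(x)}$ and then substitute.

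Next I would set up the cluster decomposition. Given a marked permutation $\sg \in \mathcal{M}S_{n,\tau}$, the positions covered by marked $\tau$-matches, together with the overlap structure among consecutive marked matches, break $\sg$ into maximal runs; each maximal run of mutually overlapping marked matches is, after order-isomorphism, a $\tau$-cluster, and the positions not covered by any marked match are ``free.'' Conversely, an element of $\mathcal{M}S_{n,\tau}$ is determined by choosing an ordered sequence of blocks, where each block is either a single unmarked position or a $\tau$-cluster, choosing how the values $1,\dots,n$ are distributed among the blocks, and filling each block with the appropriate relative order. In exponential-generating-function terms this is a sequence construction: the EGF for a single free position is $t$, and the EGF for a cluster is $\sum_{n \geq 2}\frac{t^n}{n!}C_{n,\tau}(x)$ by definition of the cluster polynomial (clusters have size at least $2$, and $C_{n,\tau}(x)$ records the marked-match statistic with the sign already absorbed once we pass to the $x-1$ substitution). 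Since a marked permutation is an \emph{ordered} sequence of such blocks and the labels interleave according to the product rule for EGFs, we get
\[
H(t,x) = \sum_{\ell \geq 0} \left( t + \sum_{n \geq 2}\frac{t^n}{n!}C_{n,\tau}(x) \right)^{\!\ell} = \frac{1}{1 - t - \sum_{n \geq 2}\frac{t^n}{n!}C_{n,\tau}(x)},
\]
valid as a formal power series since the bracketed quantity has no constant term.

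I expect the main subtlety, rather than an obstacle, to be justifying the cluster decomposition bijection precisely: one must check that ``maximal run of overlapping marked matches'' is well defined, that distinct choices of blocks and label distributions yield distinct marked permutations, and that every marked permutation arises exactly once. The key point making this clean is that the EGF sequence construction $\mathrm{Seq}(\mathcal{A})$ has generating function $1/(1-A(t))$ precisely when objects decompose uniquely into an ordered list of $\mathcal{A}$-structures with labels partitioned among them, which is exactly the situation here once we declare the two block types (free singleton, cluster). Finally I would carry out the substitution $x \mapsto x-1$: on the left this replaces $(1+x)^{\tmch(\sg)}$ by $x^{\tmch(\sg)}$, and on the right it replaces $C_{n,\tau}(x)$ by $C_{n,\tau}(x-1)$, yielding exactly the claimed formula
\[
1 + \sum_{n \geq 1}\frac{t^n}{n!}\sum_{\sg \in S_n} x^{\tmch(\sg)} = \frac{1}{1-t-\sum_{n\geq 2}\frac{t^n}{n!}C_{n,\tau}(x-1)}.
\]
The role of the $-1$ shift is the usual inclusion--exclusion: marking is an over-count, and the sign corrects it, which is why the theorem is stated with $C_{n,\tau}(x-1)$ rather than $C_{n,\tau}(x)$.
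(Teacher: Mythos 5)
Your proof is correct and follows essentially the same route the paper takes: the paper does not prove this theorem directly (it cites Elizalde and Noy), but its proof of the analogous Theorem \ref{thm:Gammacluster} is exactly your argument — replace $x$ by $x+1$ to pass to marked permutations, expand the geometric series, and interpret the product of cluster polynomials and multinomial coefficients as the unique decomposition of a marked permutation into free positions and maximal clusters. Your sequence-construction phrasing and the final $x \mapsto x-1$ substitution match the paper's treatment, so there is nothing to add.
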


It is easy to generalize this result to deal with elements of 
$\mathcal{P}^{0,0,k}_{nk}$.   
Suppose that we are given a set of fillings 
$ \Gamma \subseteq \mathcal{P}^{0,0,k}_{tk}$ where $t \geq 2$.
For any $n \geq 1$, we let $\mathcal{MP}^{0,0,k}_{kn,\Gamma}$ 
denote the set of all fillings $F \in \mathcal{P}^{0,0,k}_{nk}$ where we have 
marked some of the $\Gamma$-matches in $F$ by placing an 
$x$ on top of the column that starts a $\Gamma$-match in $F$. 
For example, 
suppose that suppose $\Gamma = \{P\}$ where $P = \begin{array}{|c|c|c|}
\hline 
4 & 6 & 5 \\
\hline
1 & 2 & 3 \\
\hline
\end{array}$
and $F \in \mathcal{P}^{0,0,2}_{12}$ pictured in Figure \ref{fig:markedP}.
Then there are 
two $P$-matches in $F$, one starting at column 1 and one starting 
at column 4.  Thus $F$ gives rise to four elements 
of $\mathcal{MP}^{0,0,2}_{12}$. Given a $F \in \mathcal{MP}^{0,0,k}_{kn,\Gamma}$, 
we let $m_{\Gamma}(F)$ be the number of marked $\Gamma$-matches in $F$. 

\fig{1.20}{markedP}{$P$-marked fillings.}

We can also extend the reduction operation to $\Gamma$-marked fillings. That is, suppose 
$\Gamma \subseteq \mathcal{P}^{0,0,k}_{tk}$ where $t \geq 2$ and 
$F$ is a filling of the $k \times n$ array with integers which 
strictly increasing in columns, reading from bottom to top, where we 
have marked some of the $\Gamma$-matches by placing an $x$ at the top of the column 
that starts a marked $\Gamma$-match. Then by $\red(F)$, we mean the element 
of $\mathcal{MP}^{0,0,k}_{kn}$ that results by replacing the $i^{th}$ smallest 
element in $F$ by $i$ and marking a column in $\red(F)$ if and only if it 
is marked in $F$.

A {\bf $\Gamma$-cluster} is a filling  of $F \in \mathcal{MP}^{0,0,k}_{kn,P}$ such that 
\begin{enumerate}
 \item every column of $F$ is contained in a marked 
$\Gamma$-match of $F$ and  
\item any two consecutive consecutive marked $\Gamma$-matches share at 
least one column. 
\end{enumerate}
We let $\mathcal{CM}^{0,0,k}_{kn,\Gamma}$ denote the set of all $\Gamma$-clusters 
in $\mathcal{MP}^{0,0,k}_{kn,\Gamma}$. 
For each $n \geq 2$, we define the cluster polynomial 
$$C^{0,0,k}_{kn,\Gamma}(x) = \sum_{F \in \mathcal{CM}^{0,0,k}_{kn,\Gamma}} x^{m_{\Gamma}(F)}$$ where $m_{\Gamma}(F)$ is the number of marked $\Gamma$-matches in $F$. 
By convention, we let $C^{0,0,k}_{k,\Gamma}(x) =1$.

\begin{theorem}\label{thm:Gammacluster}
 Let $\Gamma \subseteq  \mathcal{P}^{0,0,k}_{rk}$ where $r \geq 2$. Then 
\begin{equation}\label{eq:pcluster}
1+\sum_{n \geq 1} \frac{t^n}{(kn)!} 
\sum_{F \in \mathcal{P}^{0,0,k}_{kn}} x^{\Gmch(F)} =  
\frac{1}{1-\sum_{n \geq 1} 
\frac{t^{kn}}{(kn)!} C^{0,0,k}_{kn,\Gamma}(x-1)}.
\end{equation}
\end{theorem}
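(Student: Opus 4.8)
The plan is to mimic the Goulden--Jackson cluster argument (i.e.\ the proof of \tref{thm:GJperm}) verbatim, replacing "permutations of $[n]$" by "fillings in $\mathcal{P}^{0,0,k}_{kn}$" and "positions" by "columns." First I would set up the weighted count: for $F\in\mathcal{P}^{0,0,k}_{kn}$ and a formal variable $y$, expand
\[
x^{\Gmch(F)} = (1+(x-1))^{\Gmch(F)} = \sum_{S}\,(x-1)^{|S|},
\]
where $S$ ranges over all subsets of the set of columns of $F$ at which a $\Gamma$-match starts. Summing over all $F$, this says that the left-hand side of \eref{eq:pcluster} is the exponential generating function (in the variable $t$, with $(kn)!$ in the denominator) for the set $\mathcal{MP}^{0,0,k}_{kn,\Gamma}$ of $\Gamma$-marked fillings, each weighted by $(x-1)^{m_\Gamma(F)}$.

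Next I would invoke the cluster decomposition. Given a marked filling $F\in\mathcal{MP}^{0,0,k}_{kn,\Gamma}$, consider the columns that lie in some marked $\Gamma$-match. These columns group into maximal "runs" of overlapping marked matches, each such run being, after reduction, a $\Gamma$-cluster; the remaining columns are unmarked and unconstrained. Reading the array of columns left to right, $F$ is thus uniquely built by interleaving a sequence of clusters (of sizes $k c_1, k c_2,\dots$) with a sequence of single "free" columns. The standard transfer-theorem / exponential-formula bookkeeping then gives: a single free column contributes $\dfrac{t^{1}\cdot(\text{choose its }k\text{ entries})}{\cdots}$ and a cluster of $kc$ columns contributes $\dfrac{t^{kc}}{(kc)!}C^{0,0,k}_{kc,\Gamma}(x-1)$, and the labeled structures with $(kn)!$ normalization combine multiplicatively. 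Concretely, if $A(t)=\sum_{n\ge1}\frac{t^{kn}}{(kn)!}C^{0,0,k}_{kn,\Gamma}(x-1)$ is the cluster generating function and $B(t)=t$ accounts for a lone column (the single-column filling, with $C^{0,0,k}_{k,\Gamma}\equiv1$ handled by convention), then summing over all sequences of clusters-and-free-columns yields
\[
1+\sum_{n\ge1}\frac{t^n}{(kn)!}\sum_{F\in\mathcal{P}^{0,0,k}_{kn}}x^{\Gmch(F)} \;=\; \frac{1}{1-\bigl(B(t)-t\bigr)-A(t)} \;=\; \frac{1}{1-A(t)},
\]
since the "free column" generating function $B(t)-t$ is empty once we package a single column of $k$ entries as the $n=1$ term with trivial cluster polynomial $1$; matching conventions ($C^{0,0,k}_{k,\Gamma}(x)=1$) makes the $n=1$ term of $A(t)$ exactly $\frac{t^k}{k!}\cdot1$...

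The one genuine point to get right — and the place I expect to spend the most care — is the labeling/normalization bookkeeping: one must check that distributing $1,\dots,kn$ among the columns of a length-$n$ sequence of blocks (clusters and free columns) of total column-length $n$, and then filling each block with its own increasing-column constraint, multiplies correctly so that the $\frac{1}{(kn)!}$ normalization turns the "sequence of blocks" construction into the geometric series $\frac{1}{1-\sum_{n\ge1}\frac{t^{kn}}{(kn)!}C^{0,0,k}_{kn,\Gamma}(x-1)}$, with the free single column absorbed as the $n=1$, $C\equiv1$ term. This is precisely the same normalization miracle that underlies \tref{thm:GJperm}, and here it works for the same reason: the reduction operation $\red$ is a bijection between fillings with a fixed underlying relative order and the increasing-column fillings of each block, and the multinomial coefficient counting the distribution of labels is exactly $\binom{kn}{kc_1,kc_2,\dots}$, which cancels against the product of $(kc_i)!$'s. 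Once that is verified, reading off the coefficient identity gives exactly \eref{eq:pcluster}.
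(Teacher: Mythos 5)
Your proposal is correct and follows essentially the same route as the paper's proof: expanding $x^{\Gmch(F)}=(1+(x-1))^{\Gmch(F)}$ to pass to marked fillings weighted by $(x-1)^{m_\Gamma(F)}$ (the paper does this via the substitution $x\mapsto x+1$), then decomposing each marked filling into its maximal $\Gamma$-subclusters interleaved with free single columns, and checking that the multinomial coefficient $\binom{kn}{ka_1,\ldots,ka_m}$ distributing labels among blocks turns the sequence-of-blocks construction into the geometric series, with the lone column absorbed as the $n=1$ term via the convention $C^{0,0,k}_{k,\Gamma}(x)=1$. The normalization point you flag as the one delicate step is exactly the computation the paper carries out when extracting the coefficient of $t^{ks}/(ks)!$.
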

\begin{proof}
Replace $x$ by $x+1$ in (\ref{eq:pcluster}).  Then the left-hand side 
of (\ref{eq:pcluster}) is the generating function 
of $m_\Gamma(F)$ over all $F \in \mathcal{MP}^{0,0,k}_{kn,\Gamma}$. That is, 
it easy to see that 
\begin{equation}\label{eq:cluster2} 
1+\sum_{n \geq 1} \frac{t^n}{(kn)!} 
\sum_{F \in \mathcal{P}^{0,0,k}_{kn}} (x+1)^{\Gmch(F)} = 
1+\sum_{n \geq 1} \frac{t^n}{(kn)!} 
\sum_{F \in \mathcal{MP}^{0,0,k}_{kn,\Gamma}} x^{m_\Gamma (F)}.
\end{equation}
Thus we must show that 
\begin{equation}\label{eq:cluster3}
 1+\sum_{n \geq 1} \frac{t^n}{(kn)!} 
\sum_{F \in \mathcal{MP}^{0,0,k}_{kn,\Gamma}} x^{m_\Gamma (F)} =
\frac{1}{1-\sum_{n \geq 1} 
\frac{t^{kn}}{(kn)!} C^{0,0,k}_{kn,\Gamma}(x)}.
\end{equation}
Now 
\begin{equation}\label{eq:cluster33}
\frac{1}{1-\sum_{n \geq 1} 
\frac{t^{kn}}{(kn)!} C^{0,0,k}_{kn,\Gamma}(x)} = 
1+ \sum_{m \geq 1} \left(\sum_{n \geq 1} 
\frac{t^{kn}}{(kn)!} C^{0,0,k}_{kn,\Gamma}(x)\right)^m.
\end{equation} 
Taking the coefficient of $\frac{t^{ks}}{(ks)!}$ on both sides of 
(\ref{eq:cluster3}) where $n \geq 1$, we see that we must show that 
\begin{eqnarray}\label{eq:cluster4}
\sum_{F \in \mathcal{MP}^{0,0,k}_{sn,\Gamma}} x^{m_\Gamma (F)} &=& 
\sum_{m=1}^\infty \left(\sum_{n=1}^\infty  
\frac{t^{kn}}{(kn)!} C^{0,0,k}_{kn,\Gamma}(x) \right)^m|_{\frac{t^{ks}}{(ks)!}} 
\nonumber \\
&=& \sum_{m=1}^s \left(\sum_{n = 1}^s  
\frac{t^{kn}}{(kn)!} C^{0,0,k}_{kn,\Gamma}(x)\right)^m|_{\frac{t^{ks}}{(ks)!}}
\nonumber \\
&=&  \sum_{m =1}^s \sum_{\overset{a_1+ a_2 + \cdots +a_m=s}{a_i \geq 1}} 
\binom{ks}{ka_1,\ldots,ka_m} \prod_{j=1}^m C^{0,0,k}_{ka_j,\Gamma}(x).
\end{eqnarray}

The right-hand side of (\ref{eq:cluster4}) is now easy to interpret. 
First we pick an $m$ such that $1 \leq m \leq s$. Then we 
pick $a_1, \ldots, a_m \geq 1$ such that $a_1+a_2+ \cdots +a_m = s$. 
Next the binomial coefficient $\binom{kn}{ka_1,\ldots,ka_m}$ allows 
us to pick a sequence of sets $S_1, \ldots, S_m$ which partition 
$\{1, \ldots, ks\}$ such that $|S_i| =ka_i$ for $i=1, \ldots ,m$. 
Finally the product $\prod_{j=1}^m C^{0,0,k}_{ka_j,\Gamma}(x)$ allows 
us to pick clusters $C_i \in \mathcal{CM}^{0,0,k}_{ka_i,\Gamma}$ for 
$i=1,\ldots, m$ with weight $\prod_{j=1}^m x^{m_\Gamma(C_i)}$. Note 
that in the cases where $a_i =1$, we will interpret 
$C_i$ as just a column of height $k$ filled with the numbers $1, \ldots, k$ 
which is increasing, reading from bottom to top.

\fig{1.20}{Cluster}{Construction for the right-hand side of (\ref{eq:cluster4}).}

For example, suppose that $k=2$ and $\Gamma = \{P\}$ where 
$P = \begin{array}{|c|c|c|}
\hline 
6 & 5 & 4 \\
\hline
1 & 2 & 3 \\
\hline
\end{array}$.
Then in Figure \ref{fig:Cluster}, we have pictured 
$S_1, S_2, S_3, S_4, S_5$ which partition $\{1, \ldots, 30\}$ and 
corresponding clusters $C_1, \ldots, C_5$. Then for each $i =1, 
\ldots, m$, we create a cluster $D_i$ which results by replacing 
each $j$ in $C_i$ by the $j^{th}$ smallest element of $S_i$.  If we 
concatenate $D_1 \ldots D_m$ together, then we will obtain 
an element of $Q \in \mathcal{M}_{kn,\Gamma}^{0,0,k}$. It is easy to see 
that one can recover $D_1, \ldots, D_5$ from $Q$.  That is, 
given an element $F \in \mathcal{M}^{0,0,k}_{kn,\Gamma}$, we say that a marked 
subsequence $F[i,i+1, \ldots, j]$ is a {\bf maximal $\Gamma$-subcluster} of $F$ if 
$\red(F[i,i+1, \ldots, j])$ is a $\Gamma$-cluster and $F[i,i+1, \ldots, j]$ is not 
properly contained in a marked subsequence $F[a,a+1, \ldots, b]$ such that 
$\red(F[a,a+1, \ldots, b])$ is a $\Gamma$-cluster. In the special case 
where $i=j$ and the column $F[i]$ is not marked, then we say  
$F[i]$ is maximal $\Gamma$-subcluster if $F[i]$ is not 
properly contained in a marked subsequence $F[a,a+1, \ldots, b]$ such that 
$\red(F[a,a+1, \ldots, b])$ is a $\Gamma$-cluster. Thus $D_1, \ldots ,D_5$ are 
the maximal $\Gamma$-subclusters of $Q$. Of course, once we have 
recovered $D_1, \ldots, D_5$, we can recover the sets 
$S_1, \ldots, S_5$ and the $\Gamma$-clusters $C_1, \ldots, C_5$. 

In this manner, we can see that 
the right-hand side of (\ref{eq:cluster4}) just classifies 
the elements of $\mathcal{M}^{0,0,k}_{kn,\Gamma}$ by its maximal $\Gamma$-subclusters 
which proves our theorem. 

\end{proof}

Next suppose that we are given a binary relation $\mathscr R$ between 
$k \times 1$ arrays of integers and a set of patterns 
$\Gamma \subseteq   \mathcal{P}^{0,0,k}_{rk}$ where $r \geq 2$.  
\begin{definition}\label{def:gc}
We say that $Q \in \mathcal{MP}^{0,0,k}_{kn,\Gamma}$ is a 
{\bf generalized $\Gamma,\mathscr R$-cluster} if 
we can write \\
$Q=B_1B_2\cdots B_m$ where $B_i$ are blocks of consecutive columns 
in $Q$ such that  
\begin{enumerate}
	\item either $B_i$ is a single column or $B_i$ consists of $r$-columns 
where $r \geq 2$, $\red(B_i)$ is a $\Gamma$-cluster in $\mathcal{MP}_{kr,\Gamma}$, 
and any pair of consecutive columns in $B_i$ are in $\mathscr R$ and
	\item for $1\leq i\leq m-1$, the pair $(\last(B_i),\first(B_{i+1})$ is 
not in $\mathscr R$ where for any $j$, $\last(B_j)$ is the right-most column of $B_j$ and $\first(B_j)$ is 
the left-most column of $B_j$.  
\end{enumerate}
\end{definition}
Let $\mathcal{GC}^{0,0,k}_{kn,\Gamma, \mathscr R}$ denote the set of all generalized 
$\Gamma,\mathscr R$-clusters which have $n$ columns of height $k$. For example, 
suppose that $\mathscr R$ is the relation that holds for a pair of columns $(C,D)$ if 
and only if the top element of column $C$ is greater than the bottom element 
of column $D$ and $\Gamma = \{P\}$ where $P = \begin{array}{|c|c|c|}
\hline 
6 & 5 & 4 \\
\hline
1 & 2 & 3 \\
\hline
\end{array}$.
Then in Figure \ref{fig:PRcluster}, we have pictured a generalized $\Gamma,\mathscr R$-cluster with 
5 blocks $B_1,B_2,B_3,B_4,B_5$.

\fig{1.20}{PRcluster}{A generalized $\Gamma,\mathscr R$-cluster.}

Given $Q =B_1 B_2 \ldots B_m \in \mathcal{GC}^{0,0,k}_{kn,\Gamma, \mathscr R}$, we 
define the weight of $B_i$, $w_{\Gamma, \mathscr R}(B_i)$, to be 1 if $B_i$ is a single column and 
$x^{m_{\Gamma}(\red(B_i))}$ if $B_i$ is order isomorphic to a $\Gamma$-cluster. Then we 
define the weight of $Q$, $w_{\Gamma, \mathscr R}(Q)$, to be $(-1)^{m-1}\prod_{i=1}^m 
w_{\Gamma, \mathscr R}(B_i)$. We 
let 
\begin{equation}\label{GPRC}
GC^{0,0,k}_{kn,\Gamma, \mathscr R}(x) = \sum_{Q \in \mathcal{GC}^{0,0,k}_{kn,\Gamma, \mathscr R}} w_{\Gamma,\mathscr R}(Q).
\end{equation}

Then we have the following theorem. 

\begin{theorem}\label{thm:mainGamma}
Let $\mathscr R$ be a binary relation on pairs of columns $(C,D)$ of height $k$ 
which are filled 
with integers which are increasing from bottom to top. 
 Let $\Gamma \subseteq \mathcal{P}^{0,0,k}_{rk}$ where $r \geq 2$. Then 
\begin{equation}\label{eq:prcluster}
1+\sum_{n \geq 1} \frac{t^{kn}}{(kn)!} 
\sum_{F \in \mathcal{P}^{0,0,k}_{kn,\mathscr R}} x^{\Gmch(F)} =  
\frac{1}{1-\sum_{n \geq 1} 
\frac{t^{kn}}{(kn)!} GC^{0,0,k}_{kn,\Gamma,\mathscr R}(x-1)}.
\end{equation}
\end{theorem}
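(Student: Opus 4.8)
The plan is to mimic the proof of Theorem~\ref{thm:Gammacluster} but to replace ordinary $\Gamma$-clusters by generalized $\Gamma,\mathscr R$-clusters so as to account for the extra restriction that every pair of adjacent columns of a filling must lie in $\mathscr R$. As in that proof, I would first substitute $x \mapsto x+1$ in \eqref{eq:prcluster}; by the same binomial-expansion argument used to pass from \eqref{eq:pcluster} to \eqref{eq:cluster2}, the left-hand side becomes
\[
1+\sum_{n \geq 1} \frac{t^{kn}}{(kn)!}\sum_{F \in \mathcal{MP}^{0,0,k}_{kn,\mathscr R,\Gamma}} x^{m_\Gamma(F)},
\]
where $\mathcal{MP}^{0,0,k}_{kn,\mathscr R,\Gamma}$ is the set of $\mathscr R$-restricted fillings of the $k\times n$ array with some of their $\Gamma$-matches marked. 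So the real content is the identity
\[
1+\sum_{n \geq 1} \frac{t^{kn}}{(kn)!}\sum_{F \in \mathcal{MP}^{0,0,k}_{kn,\mathscr R,\Gamma}} x^{m_\Gamma(F)}
= \frac{1}{1-\sum_{n \geq 1}\frac{t^{kn}}{(kn)!}\,GC^{0,0,k}_{kn,\Gamma,\mathscr R}(x)},
\]
which I would prove by extracting the coefficient of $t^{ks}/(ks)!$ from both sides (after expanding the right side as a geometric series) and giving a sign-reversing-involution / inclusion–exclusion interpretation of the resulting sum
\[
\sum_{m=1}^{s}\sum_{\substack{a_1+\cdots+a_m=s\\ a_i\geq 1}}\binom{ks}{ka_1,\ldots,ka_m}\prod_{j=1}^{m} GC^{0,0,k}_{ka_j,\Gamma,\mathscr R}(x).
\]

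The combinatorial core is as follows. Fix $F \in \mathcal{MP}^{0,0,k}_{kn,\mathscr R,\Gamma}$. Call a maximal run of columns, any two consecutive of which lie in $\mathscr R$, an \emph{$\mathscr R$-segment}; since $F$ is $\mathscr R$-restricted as a whole, $F$ itself is one $\mathscr R$-segment — but inside a \emph{generalized cluster} the point is precisely that $\mathscr R$ is \emph{allowed to fail} at block boundaries, and the sign $(-1)^{m-1}$ will cancel the over/undercounting. Concretely, I would show that the right-hand sum above counts pairs $(F,\mathcal{S})$, where $F \in \mathcal{MP}^{0,0,k}_{kn,\mathscr R,\Gamma}$ and $\mathcal{S}$ is a way of cutting the column sequence of $F$ into consecutive blocks, each block being either a single column or (the re-labelling of) a $\Gamma$-cluster with all internal adjacencies in $\mathscr R$, weighted by $(-1)^{(\#\text{blocks})-1}$ times the marked-match weight; the $\binom{ks}{ka_1,\ldots,ka_m}$ distributes the label set $\{1,\ldots,ks\}$ among the blocks, and $\prod GC^{0,0,k}_{ka_j,\Gamma,\mathscr R}(x)$ chooses the block contents. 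I would then run the standard cluster-method involution on such pairs: scan the blocks left to right and look for the first place where $(\last(B_i),\first(B_{i+1}))\in\mathscr R$ — if the block $B_i$ is a single column, merge it with $B_{i+1}$; if it is a $\Gamma$-cluster, split off its last column. (One must be slightly careful about which of several such merges/splits is "canonical," but the usual first-occurrence rule works.) This is sign-reversing and fixed-point-free on the set of pairs whose cutting is \emph{not} a genuine generalized $\Gamma,\mathscr R$-cluster, i.e. where $\mathscr R$ holds at some block boundary. The surviving pairs are exactly those where $\mathscr R$ fails at every block boundary — that is, $(F,\mathcal{S})$ with $F = B_1\cdots B_m \in \mathcal{GC}^{0,0,k}_{kn,\Gamma,\mathscr R}$ read off from its decomposition — and summing their weights gives $GC^{0,0,k}_{kn,\Gamma,\mathscr R}(x)$ on each block, hence the geometric-series right-hand side.

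Equivalently and perhaps more cleanly, I would phrase it the other way around: start from a plain marked filling $F \in \mathcal{MP}^{0,0,k}_{kn,\mathscr R,\Gamma}$ on the left, decompose it canonically into maximal $\Gamma$-subclusters and singleton columns exactly as in the proof of Theorem~\ref{thm:Gammacluster}, and then observe that each such maximal piece, together with the forced failure of $\mathscr R$ at the cuts, is recorded by $GC^{0,0,k}$ via its own internal sign-reversing involution; composing the two yields the quotient formula. The main obstacle — and the only place real care is needed — is the bookkeeping around the relation $\mathscr R$ at block boundaries: in Theorem~\ref{thm:Gammacluster} the boundaries between maximal $\Gamma$-subclusters are unconstrained, whereas here the \emph{global} object must satisfy $\mathscr R$ everywhere, so one has to verify that the generalized clusters (which by Definition~\ref{def:gc} \emph{violate} $\mathscr R$ at their block boundaries) nonetheless assemble, via the geometric series, into objects that satisfy $\mathscr R$ everywhere, with all spurious configurations killed by the involution. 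Checking that the first-failure merge/split is a genuine involution (in particular that it returns you to the same pair, and that it never produces a block that is neither a single column nor a $\Gamma$-cluster) is the detail to get right; everything else is the formal manipulation already carried out for \eqref{eq:cluster33}--\eqref{eq:cluster4}.
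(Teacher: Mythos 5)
Your overall architecture is the paper's: substitute $x\mapsto x+1$, extract the coefficient of $t^{ks}/(ks)!$ to reduce the identity to a statement about concatenations of generalized $\Gamma,\mathscr R$-clusters, and kill the unwanted terms with a sign-reversing involution. But the combinatorial core — the involution itself and the identification of its fixed points — is wrong as stated, in two ways.

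First, the objects produced by the right-hand side are \emph{not} pairs $(F,\mathcal{S})$ with $F$ already in $\mathcal{MP}^{0,0,k}_{kn,\Gamma,\mathscr R}$: relabelling and concatenating generalized clusters $G_1,\dots,G_m$ yields marked fillings in which $\mathscr R$ can fail both at the internal block boundaries of each $G_i$ (where it \emph{must} fail, by Definition~\ref{def:gc}) and possibly between consecutive $G_i$'s (which are unconstrained). You have conflated the two levels of structure (blocks $B_j$ inside a generalized cluster versus the generalized clusters $E_i$ being concatenated), and consequently you run the involution on the wrong set and land on the wrong fixed points: you claim the survivors are the configurations "where $\mathscr R$ fails at every block boundary," i.e.\ single generalized clusters — but that would evaluate the right-hand side to $\sum_s \frac{t^{ks}}{(ks)!}GC^{0,0,k}_{ks,\Gamma,\mathscr R}(x)$ rather than to the left-hand side. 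The fixed points must instead be exactly the globally $\mathscr R$-restricted marked fillings, each cut into its maximal $\Gamma$-subclusters with every such piece forming a one-block generalized cluster. Second, the involution you describe ("find the first boundary where $(\last(B_i),\first(B_{i+1}))\in\mathscr R$; if $B_i$ is a single column merge it forward, if it is a $\Gamma$-cluster split off its last column") does not typecheck: merging across a boundary where $\mathscr R$ \emph{holds} produces a putative generalized cluster with an internal boundary satisfying $\mathscr R$, which is forbidden, and a $\Gamma$-cluster with its last column removed is generally not a $\Gamma$-cluster, so neither operation stays inside the set of admissible configurations, nor do the two operations invert each other. The correct move (which your closing sentence's phrase "first-failure merge/split" hints at, contradicting your earlier "first place where $\mathscr R$ holds") is: scan the $E_i$ and at the first $i$ where either $E_i$ has more than one block (split off its \emph{first} block as a new one-block generalized cluster) or $E_i$ is a single block with $(\last(E_i),\first(E_{i+1}))\notin\mathscr R$ (merge $E_i$ into $E_{i+1}$ as a new leading block); equivalently, toggle the cut status of the first block boundary at which $\mathscr R$ fails. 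With that replacement, and with the fixed points re-identified as above, your outline becomes the paper's proof.
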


\begin{proof}
Replace $x$ by $x+1$ in (\ref{eq:prcluster}).  Then 
it easy to see that 
\begin{equation}\label{eq:prcluster2} 
1+\sum_{n \geq 1} \frac{t^{kn}}{(kn)!} 
\sum_{F \in \mathcal{P}^{0,0,k}_{kn,\mathscr R}} (x+1)^{\Gmch(F)} = 
1+\sum_{n \geq 1} \frac{t^{kn}}{(kn)!} 
\sum_{F \in \mathcal{MP}^{0,0,k}_{kn,\Gamma,\mathscr R}} x^{m_\Gamma(F)}.
\end{equation}
Thus we must show that 
\begin{equation}\label{eq:prcluster3}
 1+\sum_{n \geq 1} \frac{t^{kn}}{(kn)!} 
\sum_{F \in \mathcal{MP}^{0,0,k}_{kn,\Gamma,\mathscr R}} x^{m_\Gamma(F)} =
\frac{1}{1-\sum_{n \geq 1} 
\frac{t^{kn}}{(kn)!} GC^{0,0,k}_{kn,\Gamma,\mathscr R}(x)}.
\end{equation}
Now 
\begin{equation}\label{eq:prcluster33}
\frac{1}{1-\sum_{n \geq 1} \frac{t^{kn}}{(kn)!} GC^{0,0,k}_{kn,\Gamma,\mathscr R}(x)} = 
1+ \sum_{m \geq 1} \left(\sum_{n \geq 1} 
\frac{t^{kn}}{(kn)!} GC^{0,0,k}_{kn,\Gamma,\mathscr R}(x)\right)^m.
\end{equation} 
Taking the coefficient of $\frac{t^{ks}}{(ks)!}$ on both sides of 
(\ref{eq:cluster3}) where $n \geq 1$, we see that we must show that 
\begin{eqnarray}\label{eq:prcluster4}
\sum_{F \in \mathcal{MP}^{0,0,k}_{sn}} x^{m_\Gamma(F)} &=& 
\sum_{m=1}^\infty \left(\sum_{n \geq 1} 
\frac{t^{kn}}{(kn)!} GC^{0,0,k}_{kn,\Gamma,\mathscr R}(x)\right)^m|_{\frac{t^{ks}}{(ks)!}} 
\nonumber\\
&=& \sum_{m=1}^s \left(\sum_{n = 1}^s  
\frac{t^{kn}}{(kn)!} GC^{0,0,k}_{kn,\Gamma,\mathscr R}(x)\right)^m|_{\frac{t^{ks}}{(ks)!}}
\nonumber \\
&=& \sum_{m =1}^s \sum_{\overset{a_1+ a_2 + \cdots +a_m=s}{a_i \geq 1}} 
\binom{ks}{ka_1,\ldots,ka_m} \prod_{j=1}^m GC^{0,0,k}_{ka_j,\Gamma,r}(x).
\end{eqnarray}
The right-hand side of (\ref{eq:prcluster4}) is now easy to interpret. 
First we pick an $m$ such that $1 \leq m \leq s$. Then we 
pick $a_1, \ldots, a_m \geq 1$ such that $a_1+a_2+ \cdots +a_m = s$. 
Next the binomial coefficient $\binom{ks}{ka_1,\ldots,ka_m}$ allows 
us to pick a sequence of sets $S_1, \ldots, S_m$ which partition 
$\{1, \ldots, ks\}$ such that $|S_i| =ka_i$ for $i=1, \ldots ,m$. 
Finally the product $\prod_{j=1}^m GC^{0,0,k}_{ka_j,\Gamma,\mathscr R}(x)$ allows 
us to pick generalized $\Gamma,\mathscr R$-clusters $G_i \in \mathcal{GC}^{0,0,k}_{ka_i,\Gamma,\mathscr R}$ for 
$i=1,\ldots, m$ with weight $\prod_{j=1}^m w_{\Gamma,\mathscr R}(G_i)$. Note 
that in the cases where $a_i =1$, our definitions imply that  
$C_i$ is just a column of height $k$ filled with the numbers $1, \ldots, k$ 
which is increasing, reading from bottom to top.

\fig{1.20}{PRCluster2}{Construction for the right-hand side of (\ref{eq:prcluster4}).}

For example, suppose that $k=2$ and $\Gamma =\{P\}$ where 
$P = \begin{array}{|c|c|c|}
\hline 
6 & 5 & 4 \\
\hline
1 & 2 & 3 \\
\hline
\end{array}$.
Suppose that $\mathscr R$ is relation where,  for any two columns $C$ and $D$ which filled with 
integers and are strictly increasing in columns, $(C,D) \in \mathscr R$ if and only if 
the top element of $C$ is greater than the bottom element of $D$. 
Then in Figure \ref{fig:PRCluster2}, we have pictured 
$S_1, S_2, S_3, S_4, S_5$ which partition $\{1, \ldots, 30\}$ and 
corresponding generalized $\Gamma,\mathscr R$-clusters $G_1, \ldots, G_5$. For each $i$, 
we have indicated the separation between the blocks of $G_i$ by dark black 
lines.  Then for each $i =1, 
\ldots, m$, we create a cluster $E_i$ which results by replacing 
each $j$ in $G_i$ by the $j^{th}$ smallest element of $S_i$.  If we 
concatenate $E_1 \ldots E_5$ together, then we will obtain 
an element of $Q \in \mathcal{M}_{kn,\Gamma}^{0,0,k}$. The weight 
of $Q$ equals $\prod_{j=1}^5 w_{\Gamma,\mathscr R}(G_i)$ where 
\begin{eqnarray*}
w_{\Gamma,\mathscr R}(G_1) &=& (-1)^2 x, \\
w_{\Gamma,\mathscr R}(G_2) &=& 1, \\
w_{\Gamma,\mathscr R}(G_3) &=& x, \\
w_{\Gamma,\mathscr R}(G_4) &=& (-1)^1 x^2, \ \mbox{and}\\
w_{\Gamma,\mathscr R}(G_5) &=& 1.
\end{eqnarray*}
In Figure \ref{fig:PRCluster2}, we have indicated the boundaries between 
the $E_i$s by 
light lines. 

We let $\mathcal{HGC}_{ks,\Gamma,\mathscr R}$ denote the set of all elements that can be constructed 
in this way. Thus $Q =E_1 \ldots E_m$ is an element of 
$\mathcal{HGC}_{ks,\Gamma,\mathscr R}$ if and only if for each $i =1, \ldots, m$, 
$\red(E_i)$ is a generalized $\Gamma,\mathscr R$-cluster. Next we define a sign 
reversing involution $\theta:\mathcal{HGC}_{ks,\Gamma,\mathscr R} \rightarrow \mathcal{HGC}_{ks,\Gamma,\mathscr R}$. 
Given $Q =E_1 \ldots E_m \in \mathcal{HGC}_{ks,\Gamma,\mathscr R}$, look for the first $i$ such that either 
\begin{enumerate}
\item the block structure of $\red(E_i) = B^{(i)}_1 \ldots B^{(i)}_{k_i}$ consists 
of more than one block or 
\item $E_i$ consists of a single block $B^{(i)}_1$ and $(\last(B^{(i)}_1),\first(E_{i+1}))$ is 
not in $\mathscr R$.
\end{enumerate}
In case (1), we let $\theta(E_1 \ldots E_m)$ 
be the result of  replacing $E_i$ by two generalized $\Gamma,\mathscr R$-clusters, $E_i^*$ and 
$E_i^{**}$ where $E_i*$ consists just of $B^{(i)}_1$ and $E_i^{**}$ consists 
of $B^{(i)}_2 \ldots B^{(i)}_{k_i}$. Note that in this case 
$w_{\Gamma,\mathscr R}(E_i) = (-1)^{k_i-1}\prod_{j=1}^{k_i} w_{\Gamma,\mathscr R}(B^{(i)}_j)$ while 
$w_{\Gamma,\mathscr R}(E_i^*)w_{\Gamma,\mathscr R}(E_i^{**}) =(-1)^{k_i-2}\prod_{j=1}^{k_i} 
w_{\Gamma,\mathscr R}(B^{(i)}_j)$. 
In case (2), we let $\theta(E_1 \ldots E_m)$ 
be the result of  replacing  $E_i$ and $E_{i+1}$ by the single generalized 
$\Gamma,\mathscr R$-cluster $E= B^{(i)}_1 B^{(i+1)}_1 \ldots B^{(i+1)}_{k_{i+1}}$. Note 
that since $(\last(B^{(i)}_1),\first(E_{i+1}))$ is 
not in $\mathscr R$, $B^{(i)}_1 B^{(i+1)}_1 \ldots B^{(i+1)}_{k_{i+1}}$ reduces 
to a generalized $\Gamma,\mathscr R$-cluster. In this 
case,  $w_{\Gamma,\mathscr R}(E_i) w_{\Gamma,\mathscr R}(E_{i+1}) = (-1)^{k_{i+1}-1} w_{\Gamma,\mathscr R}(B^{(i)}_1) 
\prod_{j=1}^{k_{i+1}} w_{\Gamma,\mathscr R}(B^{(i+1)}_j)$ and  
$w_{\Gamma,\mathscr R}(E) = (-1)^{k_{i+1}} w_{\Gamma,\mathscr R}(B^{(i)}_1) 
\prod_{j=1}^{k_{i+1}} w_{\Gamma,\mathscr R}(B^{(i+1)}_j)$.
If neither case (1) or case (2) applies, then we let $\theta(E_1 \ldots E_m) =
E_1 \ldots E_m$.  For example, suppose that $\mathscr R$ is the binary 
relation where, for any two columns $C$ and $D$, which filled with 
integers and are strictly increasing in columns, $(C,D) \in \mathscr R$ if and only if 
the top element of $C$ is greater than the bottom element of $D$ and 
$\Gamma = \{P\}$ where $P = \begin{array}{|c|c|c|}
\hline 
6 & 5 & 4 \\
\hline
1 & 2 & 3 \\
\hline
\end{array}$.
Then  if $Q=E_1 \ldots E_5$ is the 
generalized $\Gamma,\mathscr R$-cluster pictured in Figure \ref{fig:PRCluster2}, 
then we are in case (1) with $i=1$ since $E_1$ consists of more than one block. 
Thus $\theta(Q)$ results by breaking that generalized $\Gamma,\mathscr R$-cluster into to 
two clusters $E_1^*$ of size 1 and $E^{**}_1$ of size 4. $\theta(Q)$ is 
pictured in Figure \ref{fig:PRCluster3}.

\fig{1.20}{PRCluster3}{The involution $\theta$.}

It is easy to see that $\theta$ is an involution. That is, if 
$Q=E_1 \ldots E_m$ is in case (1) using $E_i$, then $\theta(Q)$ will be 
in case (2) using $E_i^*$ and $E_i^{**}$.  Similarly 
if $Q=E_1 \ldots E_m$ is in case (1) using $E_i$ and $E_{i+1}$, then $\theta(Q)$ will be 
in case (2) using $E =E_iE_{i+1}$. It follows that if 
$\theta (E_1 \ldots E_m) \neq E_1 \ldots E_m$, then 
$w_{\Gamma,\mathscr R}(E_1 \ldots E_m) = - w_{\Gamma,\mathscr R}(\theta(E_1 \ldots E_m))$ so 
that the right-hand side of (\ref{eq:prcluster4}) equals 
$$\sum_{Q = E_1 \ldots E_m \in \mathcal{HGC}_{ks,\Gamma,\mathscr R}, \theta(Q) =Q}
\prod_{i=1}^m w_{\Gamma,\mathscr R}(E_i).$$
Thus we must examine the fixed points of $\theta$.  

If $Q = E_1 \ldots E_m \in \mathcal{HGC}_{ks,\Gamma,\mathscr R}$ and $\theta(Q) =Q$, 
then it must be the case that each $E_i$ consists of single column of weight 1 or 
it reduces to generalized 
$\Gamma,\mathscr R$-cluster $\overline{E}_i$ consisting of a single block $B^{(i)}_1$
whose weight is the weight of $\red(B^{(i)}_1)$ as
a $\Gamma$-cluster. Moreover, it must be the case that for all $i=1, \ldots m-1$, 
$(\last(E_i),\first(E_{i+1})$ is in $\mathscr R$. But this means for all $j =1, \ldots, n-1$, 
$(Q[j],Q[j+1])$ is in $\mathscr R$.  That is, either $Q[j]$ equals $\last(E_i)$ for some $i$ 
or column $j$ is contained in one of the $\Gamma$-clusters $E_i$ in which 
case $(Q[j],Q[j+1])$ is in $\mathscr R$ by our definition of generalized $\Gamma,\mathscr R$-clusters. Thus any 
fixed point $Q$ of $\theta$ is an element $\mathcal{MP}^{0,0,k}_{ks,\mathscr R}$. Then just like 
our proof Theorem \ref{thm:Gammacluster}, it follows that  $E_1, \ldots, E_m$ are just 
the maximal $\Gamma$-subclusters of an element in $\mathcal{P}^{0,0,k}_{ks,\mathscr R}$. 
Vice versa, if $T =F_1 \ldots F_r$ is an element of $\mathcal{P}^{0,0,k}_{ks,\mathscr R}$ 
where $F_1, \ldots, F_r$ are the maximal $\Gamma$-subclusters of $T$, then 
$T = F_1 \ldots F_r$ is a fixed point of $\theta$. Thus we have proved 
that the right-hand side of (\ref{eq:prcluster4}) equals 
$$\sum_{F \in \mathcal{MP}^{0,0,k}_{ks,\mathscr R}} x^{m_\Gamma(F)}$$
which is what we wanted to prove. 
\end{proof}

We note that Theorem \ref{thm:Gammacluster} is a special case 
of Theorem \ref{thm:mainGamma} in the case where $\mathscr R$ holds 
for any pair of columns $(C,D)$.

\subsection{Examples}

We end this section with two examples.  Our goal is to study patterns 
in generalized Euler permutations $E^{0,0,k}_{kn}$. Thus the relation 
that we want to consider is that if $C$ and $D$ are fillings of column 
of height $k$ in which the numbers increase, reading from bottom to top, 
then $(C,D) \in \mathscr R$ if and only if the top element of column 
$C$ is greater than the bottom element of column $D$. Let $A_{k,3}$ be the 
element of $\mathcal{P}^{0,0,k}_{3}$ whose $2j+1^{\mathrm{st}}$ row consists 
of the elements $6j+1,6j+2,6j+3$, reading from left to right. and 
whose $2j+2^{\mathrm{th}}$ row consists of the $6j+4,6j+5,6j+6$, reading from 
right to left. For example, we have pictured $A_{3,3}$ and $A_{4,3}$ 
in Figure \ref{fig:Adiagrams}. There is a natural poset associated 
to $A_{3,3}$ and $A_{4,3}$. The Hasse diagrams of 
such posets are obtained by having a node for each integer and drawing 
a directed arrow from the node corresponding to $i$ to the node corresponding 
to $i+1$ for each $i \geq 1$.  The Hasse diagrams of the 
posets corresponding to $A_{3,3}$ and $A_{4,3}$ are pictured in 
Figure \ref{fig:Adiagrams}.

\fig{1.20}{Adiagrams}{$A_{3,3}$ and $A_{4,3}$.}

It is easy to see that in a $A_{k,3}$-cluster, the fact that every column 
is part of marked cluster and the fact that any two consecutive marked 
clusters must share a column implies that in a $A_{k,3}$-cluster with $n$ 
columns, the elements in the first row are the $n$ smallest elements, 
the elements in the second row are the next $n$ smallest elements, etc. 
Moreover, elements in odd rows are increasing, reading from left to right, 
and the elements in even rows are increasing, reading from right to left. 
For example, we have pictured the Hasse diagrams corresponding  
$A_{3,3}$-clusters at the top of Figure \ref{fig:Aclusters} and 
the Hasse diagrams corresponding to $A_{4,3}$-clusters 
at the bottom of Figure \ref{fig:Aclusters}. 
This means that in a $A_{k,3}$-cluster with $n$ columns, 
there are $A_{k,3}$-matches starting a positions $1, \ldots, n-2$. 
Thus to determine $C_{A_{k,3}}(x)$, we need only have to consider the 
possible ways to mark $A_{k,3}$-matches. Clearly we must 
mark the $A_{k,3}$-match starting at column 1 and the 
$A_{k,3}$-match starting at column $n-2$ since the first and last column 
must be part of marked $A_{k,3}$-matches. It follows 
that for all $k \geq 2$, 
\begin{eqnarray*}
C^{0,0,k}_{k,A_{k,3}}(x) &=& 1, \\
C^{0,0,k}_{2k,A_{k,3}}(x) &=& 0, \\
C^{0,0,k}_{3k,A_{k,3}}(x) &=& x, \ \mbox{and} \\
C^{0,0,k}_{4k,A_{k,3}}(x) &=& x^2.
\end{eqnarray*}
For $C^{0,0,k}_{nk,A_{k,3}}(x)$, for $k \geq 5$, there are two cases depending 
on whether the $A_{k,3}$-match starting at column $n-3$ is marked or not. 
If the $A_{k,3}$-match starting at column $n-3$ is marked, then we can remove 
the last column, the elements $3n,3n-1,3n-2$, and the mark on column $n-2$ 
to obtain of $A_{k,3}$-cluster with $n-1$-columns after we reorder the elements. 
If the $A_{k,3}$-match starting at column $n-3$ is not marked, then we can remove 
the last two columns, the elements $3n,3n-1,3n-2,3n-3,3n-4,3n-5$, 
and the mark on column $n-2$ 
to obtain of $A_{k,3}$-cluster with $n-2$-columns after we reorder the elements. 
These two cases are pictured in Figure \ref{fig:Aclusters}. It follows 
that for $n \geq 5$, 
\begin{equation}
C^{0,0,k}_{nk,A_{k,3}}(x) = 
x C^{0,0,k}_{(n-1)k,A_{k,3}}(x) + xC^{0,0,k}_{(n-2)k,A_{k,3}}(x). 
\end{equation}
Then one can easily compute that for all $k \geq 2$, 
\begin{eqnarray*}
C^{0,0,k}_{5k,A_{k,3}}(x) &=& x^2+x^3, \\
C^{0,0,k}_{6k,A_{k,3}}(x) &=& 2x^3+x^4, \\
C^{0,0,k}_{7k,A_{k,3}}(x) &=& x^3+3x^4+x^5, \\
C^{0,0,k}_{8k,A_{k,3}}(x) &=& 3x^4+4x^5+x^6, \\
C^{0,0,k}_{9k,A_{k,3}}(x) &=& x^4+6x^5+5x^6+x^7, \ \mbox{and} \\
C^{0,0,k}_{10k,A_{4k,3}}(x) &=& 4x^5+10x^6+6x^7+x^8.
\end{eqnarray*}

\fig{1.20}{Aclusters}{$A_{3,3}$-clusters  and $A_{4,3}$-clusters.}

Next we consider generalized $A_{k,3},\mathscr R$-clusters. There are two types 
depending on whether $k$ is even or odd. 
That is, when $k =2j+1$ is odd, then it is easy to see that for each 
$A_{2j+1,3}$-cluster $C$ with more than 2 columns, the smallest element 
in the cluster occupies the bottom left-hand position and the largest 
element in the cluster occupies the top right-hand position. In 
a generalized $A_{2j+1,3}, \mathscr R$-cluster with blocks $B_1B_2 \ldots B_r$, 
we must have that the top right-hand element of $B_i$ is less than 
the bottom left-most element of $B_{i+1}$ for $i =1, \ldots, r-1$. 
If we draw the Hasse diagram of such a generalized cluster, it is 
easy to see that these conditions force a total order on elements 
of the generalized  $A_{2j+1}, \mathscr R$-cluster. See for example, 
Figure \ref{fig:Aoddclusters} where we have pictured two different 
 $A_{3,3}, \mathscr R$-clusters.

\fig{1.20}{Aoddclusters}{Generalized $A_{3,3},\mathscr R$-clusters.}

This means that we can develop simple recursions for the 
generalized cluster polynomials $GC^{0,0,2j+1}_{(2j+1)n,A_{2j+1,3},\mathscr R}(x)$. 
That is, it easy to check that 
\begin{eqnarray*}
GC^{0,0,2j+1}_{(2j+1),A_{2j+1,3},\mathscr R}(x) &=& 1 \ \mbox{and} \\ 
GC^{0,0,2j+1}_{(2j+1)2,A_{2j+1,3},\mathscr R}(x) &=& -1.
\end{eqnarray*}
Then for $n \geq 3$, we can classify the generalized 
$A_{2j+1,3}, \mathscr R$-clusters with $n$ columns 
$B_1 \ldots B_s$ by the size of 
the first block $B_1$.  It could be that $s=1$ so that $B_1$ is 
a $A_{2j+1,3}$-cluster. Such generalized clusters contribute 
$C^{0,0,2j+1}_{(2j+1)n,A_{2j+1,3}}(x)$ to $GC^{0,0,2j+1}_{(2j+1)n,A_{2j+1,3},\mathscr R}(x)$. 
Otherwise $B_1$ has $r$ columns where $1 \leq r \leq n-1$. In that situation, 
our remarks imply that $B_1$ consists of the elements $1, \ldots, (2j+1)r$ 
and if we subtract $(2j+1)r$ from the elements of $B_2 \ldots B_s$, we will 
end up with a generalized $A_{2j+1,3}, \mathscr R$-clusters with $n-r$ columns. Thus 
if $B_1$ has $r$ columns, then such generalized clusters contribute 
$-C^{0,0,2j+1}_{(2j+1)r,A_{2j+1,3}}(x) GC^{0,0,2j+1}_{(2j+1)(n-r),A_{2j+1,3},\mathscr R}(x)$  to 
$GC^{0,0,2j+1}_{(2j+1)n,A_{2j+1,3},\mathscr R}(x)$. It follows that for $n \geq 3$,
\begin{equation}\label{G2j+1A:rec}
GC^{0,0,2j+1}_{(2j+1)n,A_{2j+1,3},\mathscr R}(x) = C^{0,0,2j+1}_{(2j+1)n,A_{2j+1,3}}(x) - 
\sum_{r=1}^{n-1} C^{0,0,2j+1}_{(2j+1)r,A_{2j+1,3}}(x) GC^{0,0,2j+1}_{(2j+1)(n-r),A_{2j+1,3},\mathscr R}(x).
\end{equation}

For example, we have used this recursion to compute some initial values 
of $GC^{0,0,3}_{3n,A_{3,3},\mathscr R}(x)$.

\begin{center}
\begin{tabular}{|l|l|}
\hline
$n$ & $GC^{0,0,3}_{3n,A_{3,3},\mathscr R}(x)$ \\
\hline
1& 1 \\
\hline 
2&$-1$\\
\hline 
3&$1+x$\\
\hline 
4&$-1-2x+x^2$\\
\hline 
5&$1+3x-x^2+x^3$\\
\hline 
6&$-1-4x+x^4$\\
\hline 
7&$1+5x+2x^2-2x^3+x^4+x^5$\\
\hline 
8&$-1-6x-5x^2+4x^3-3x^4+2x^5+x^6$\\
\hline
\end{tabular}
\end{center}

Then one can use these initial 
values of $GC^{0,0,3}_{3n,A_{3,3},\mathscr R}(x)$ 
and Theorem \ref{thm:mainGamma} 
to compute the following initial 
values of the polynomials $\sum_{\sg \in \mathcal{E}^{0,0,3}_{3n}} 
x^{A_{3,3}\text{-}\mathrm{mch}(\sg)}$.

\begin{center}
\begin{tabular}{|l|l|}
\hline
$n$ & $\sum_{\sg \in \mathcal{E}^{0,0,3}_{3n}} 
x^{A_{3,3}\text{-}\mathrm{mch}}(\sg)$\\
\hline
1& 1 \\
\hline 
2& 19\\
\hline 
3&$ 1512+x$\\
\hline 
4&$315086+436 x+x^2$\\
\hline 
5&$ 135797476+286658 x+906 x^2+x^3$\\
\hline 
6&$ 104962186084+297928120 x+1118810 x^2+1628 x^3+x^4$\\
\hline 
7&$ 132231677979632+471533554572 x+2006137956 x^2+3724536 x^3+2656 x^4+x^5$\\
\hline 
\end{tabular}
\end{center}

When $k =2j$ is even, then it is easy to see that for each 
$A_{2j,3}$-cluster $C$ with $n \geq 2$ columns, the smallest element 
in the cluster occupies the bottom left-most position  
and the element in the top right-most position equals $(2j-1)n+1$. In 
a generalized $A_{2j,3}, \mathscr R$-cluster $G$ with 
blocks $B_1B_2 \ldots B_r$, 
we must have that top right-most element of $B_i$ is less than 
the bottom left-most element of $B_{i+1}$ for $i =1, \ldots, r-1$. There are 
three cases to consider. First the generalized 
$A_{2j+1,3}, \mathscr R$-clusters which consist of a single block $B_1$ 
contribute $C^{0,0,2j}_{(2j)n,A_{2j,3}}(x)$ to $GC^{0,0,2j}_{(2j)n,A_{2j,3},\mathscr R}(x)$.
There are two additional cases if $G$ has more than one block 
which are pictured in Figure \ref{fig:Aevencluster}. 
It is easy to see 
that if $B_1$ is size one, which is pictured at the top of  
Figure \ref{fig:Aevencluster}, then the elements in $B_1$ must be the smallest 
$2j$ elements in $G$. Then $B_2 \ldots B_r$ is order isomorphic 
to a  $A_{2j+1,3}, \mathscr R$-cluster with $n-1$ columns. 
Hence such generalized  
$A_{2j+1,3}, \mathscr R$-clusters contribute $-GC^{0,0,2j}_{(2j)(n-1),A_{2j,3},\mathscr R}(x)$ to 
 $GC^{0,0,2j}_{(2j)n,A_{2j,3},\mathscr R}(x)$.   
However if $B_1$ has $2 \leq s \leq n-1$ columns, which is the situation 
pictured at the bottom of  
Figure \ref{fig:Aevencluster}, then it is easy to see that elements 
in the first $2j-1$ rows of $B_1$ plus right-most element of the top row of $B_1$, 
must be the elements $1, \ldots, s(2j-1) +1$. Then we are free to 
choose the remaining elements in the top row of $B_1$ in 
$\binom{2jn-((2j-1)s+1)}{s-1}$ ways and $B_2 \ldots B_r$ will be  order isomorphic 
to a  $A_{2j+1,3}, \mathscr R$-cluster with $n-s$ columns. 
Hence such generalized  
$A_{2j+1,3}, \mathscr R$-clusters contribute 
$-\binom{2jn-((2j-1)s+1)}{s-1}C^{0,0,2j}_{2js}(x)
GC^{0,0,2j}_{(2j)(n-s),A_{2j,3},\mathscr R}(x)$ to 
 $GC^{0,0,2j}_{(2j)n,A_{2j,3},\mathscr R}(x)$. 

\fig{1.20}{Aevencluster}{Generalized $A_{4,3},\mathscr R$-clusters.}

It is easy to check that for $ j \geq 1$, 
 \begin{eqnarray*}
GC^{0,0,2j}_{(2j)n,A_{2j,3},\mathscr R}(x) &=& 1 \ \mbox{and} \\ 
GC^{0,0,2j}_{(2j)2,A_{2j,3},\mathscr R}(x) &=& -1.
\end{eqnarray*}
Then it follows from our arguments above that for $n \geq 3$,
\begin{eqnarray}\label{G2jA:rec}
GC^{0,0,2j}_{(2j)n,A_{2j,3},\mathscr R}(x) &=& 
C^{0,0,2j}_{(2j)n,A_{2j,3}}(x) - 
GC^{0,0,2j}_{(2j+1)(n-1),A_{2j+1,3},\mathscr R}(x) - \nonumber \\
&&\sum_{s=2}^{n-1} \binom{2jn-((2j-1)s+1)}{s-1}
C^{0,0,2j}_{(2j)s,A_{2j,3}}(x) GC^{0,0,2j}_{(2j)(n-s),A_{2j,3},\mathscr R}(x).
\end{eqnarray}

For example, we have used this recursion to compute some initial values 
of $GC^{0,0,2}_{2n,A_{2,3},\mathscr R}(x)$.

\begin{center}
\begin{tabular}{|l|l|}
\hline
$n$ & $GC^{0,0,2}_{2n,A_{2,3},\mathscr R}(x)$ \\
\hline
1& 1 \\
\hline 
2&$-1$\\
\hline 
3&$1+x$\\
\hline 
4&$-1-7x+x^2$\\
\hline 
5&$1+22x-10x^2+x^3$\\
\hline 
6&$-1-50x+2x^2-14x^3+x^4$\\
\hline 
7&$1+95x+299x^2-86x^3-19x^4+x^5$\\
\hline 
8&$-1-161x-1796x^2+1705x^3-377x^4-25x^5+x^6$\\
\hline
\end{tabular}
\end{center}

One can then use these values of $GC^{0,0,2}_{2n,A_{2,3},\mathscr R}(x)$ 
and Theorem \ref{thm:mainGamma} 
to compute the following initial 
values of the polynomials $\sum_{\sg \in \mathcal{E}^{0,0,3}_{3n}} 
x^{A_{2,3}\text{-}\mathrm{mch}(\sg)}$.

\begin{center}
\begin{tabular}{|l|l|}
\hline
$n$ & $\sum_{\sg \in \mathcal{E}^{0,0,3}_{3n}} 
x^{A_{3,3}\text{-}\mathrm{mch}}(\sg)$\\
\hline
1& 1 \\
\hline 
2& 5\\
\hline 
3&$ 60+x$\\
\hline 
4&$1337+47x+x^2$\\
\hline 
5&$ 47848+2595x+77x^2+x^3$\\
\hline 
6&$2511532+184040x+7178x^2+114x^3+x^4$\\
\hline 
7&$ 181751841+16779902x+810333x^2+18746x^3+158x^4+x^5$\\
\hline 
\end{tabular}
\end{center}

\section{Generalized Clusters for fillings 
of $D^{0,j,k}_{kn+j}$, $D^{i,0,k}_{i+kn}$, and $D^{i,j,k}_{ikn+j}$.}

In this section, we shall extend the generalized cluster 
method to deal with various types of fillings of 
$D^{0,j,k}_{kn+j}$, $D^{i,0,k}_{i+kn}$, and $D^{i,j,k}_{ikn+j}$ 
in the case where $k \geq 2$ and $i$ and $j$ are positive integers 
which are not equal to $k$.

\subsection{Generalized Clusters for fillings 
of $D^{0,j,k}_{kn+j}$}

We will start by considering fillings of $D^{0,j,k}_{kn+j}$. 
Fix a set of patterns 
$\Gamma \subseteq \mathcal{P}^{0,0,k}_{rk}$ where $r \geq 2$.
We let $\mathcal{MP}^{0,j,k}_{kn+j,\Gamma}$ denote the set of elements that 
arise by starting with an element $F$ of $\mathcal{P}^{0,j,k}_{kn+j}$ and 
marking some of the $\Gamma$-matches in $F$ by placing on $x$ on the column 
which starts the $\Gamma$-match. Given an element 
$F \in \mathcal{MP}^{0,j,k}_{kn+j,\Gamma}$, we let $m_{\Gamma}(F)$ denote 
the number of marked $\Gamma$-matches in $F$. 

To find an extension of Theorem 
\ref{thm:mainGamma} for these types of arrays, we need 
to define a special type of generalized cluster which we 
call a generalized end cluster. That is, suppose that we are given a binary relation $\mathscr R$ on columns of integers.  

\begin{definition}
We say that $Q \in \mathcal{MP}^{0,j,k}_{kn+j,\Gamma}$ is a 
{\bf generalized $\Gamma,\mathscr R$-end-cluster} if 
we can write $Q=B_1B_2\cdots B_m$ where $B_i$ are blocks 
of consecutive columns 
in $Q$ such that  
\begin{enumerate}
\item $B_m$ is a column of height $j$, 	
\item for $i < m$, 
either $B_i$ is a single column or $B_i$ consists of $r$-columns 
where $r \geq 2$, $\red(B_i)$ is a $\Gamma$-cluster in $\mathcal{MP}_{kr,\Gamma}$, 
and any pair of consecutive columns in $B_i$ are in $\mathscr R$ and
	\item for $1\leq i\leq m-1$, the pair $(\last(B_i),\first(B_{i+1})$ is 
not in $\mathscr R$.  
\end{enumerate}
\end{definition}
Let $\mathcal{GEC}^{0,j,k}_{kn+j,\Gamma,\mathscr R}$ denote the set of all generalized 
$\Gamma,\mathscr R$-end-clusters which have $n$ columns of height $k$ followed 
by a column of height $j$. 
Given $Q =B_1 B_2 \ldots B_m \in \mathcal{GEC}^{0,j,k}_{kn+j,\Gamma,\mathscr R}$, we 
define the weight of $B_i$, $w_{\Gamma,\mathscr R}(B_i)$, to be 1 if $B_i$ is a single column and 
$x^{m_{\Gamma}(\red(B_i))}$ if $B_i$ is order isomorphic to a $\Gamma$-cluster. Then we 
define the weight of $Q$, $w_{\Gamma,\mathscr R}(Q)$, to be 
$(-1)^{m-1}\prod_{i=1}^m w_{\Gamma,\mathscr R}(B_i)$. We 
let 
\begin{equation}\label{GEPRC}
GEC^{0,j,k}_{kn+j,\Gamma,\mathscr R}(x) = 
\sum_{Q \in \mathcal{GEC}^{0,j,k}_{kn+j,\Gamma,\mathscr R}} w_{\Gamma,\mathscr R}(Q).
\end{equation}
Let $\mathcal{P}^{0,j,k}_{kn+j,\mathscr R}$ denote the set of all elements 
$F \in \mathcal{P}^{0,j,k}_{kn+j}$ such that the relation $\mathscr R$ holds 
for any pair of consecutive columns in $F$. 
We let $\mathcal{MP}^{0,j,k}_{kn+j,\Gamma,\mathscr R}$ denote the set of elements that 
arise by starting with an element $F$ of $\mathcal{P}^{0,j,k}_{kn+j,\mathscr R}$ and 
marking some of the $\Gamma$-matches in $F$ by placing on $x$ on the column 
which starts the $\Gamma$-match.

Then we have the following theorem. 

\begin{theorem}\label{thm:jmainGamma}
Let $\mathscr R$ be a binary relation on pairs of columns $(C,D)$  
which are filled 
with integers which are increasing from bottom to top. 
 Let $\Gamma \subseteq \mathcal{P}^{0,0,k}_{tk}$ where $t \geq 2$. Then 
\begin{equation}\label{eq:endprcluster}
\sum_{n \geq 0} \frac{t^{kn+j}}{(kn+j)!} 
\sum_{F \in \mathcal{P}^{0,j,k}_{kn+j,\mathscr R}} x^{\Gmch(F)} =  
\frac{\sum_{n \geq 0} 
\frac{t^{kn+j}}{(kn+j)!} GEC^{0,j,k}_{kn+j,\Gamma,\mathscr R}(x-1)}{1-\sum_{n \geq 1} 
\frac{t^{kn}}{(kn)!} GC^{0,0,k}_{kn,\Gamma,\mathscr R}(x-1)}.
\end{equation}
\end{theorem}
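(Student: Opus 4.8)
The plan is to mimic the proof of Theorem \ref{thm:mainGamma}, replacing $x$ by $x+1$ throughout and reducing the identity to a combinatorial decomposition of marked fillings. First I would substitute $x \to x+1$ in \eqref{eq:endprcluster}; exactly as in the proof of Theorem \ref{thm:mainGamma}, the left-hand side becomes $\sum_{n \geq 0}\frac{t^{kn+j}}{(kn+j)!}\sum_{F \in \mathcal{MP}^{0,j,k}_{kn+j,\Gamma,\mathscr R}} x^{m_\Gamma(F)}$, so it suffices to prove
\begin{equation*}
\sum_{n \geq 0} \frac{t^{kn+j}}{(kn+j)!}
\sum_{F \in \mathcal{MP}^{0,j,k}_{kn+j,\Gamma,\mathscr R}} x^{m_\Gamma(F)} =
\frac{\sum_{n \geq 0}
\frac{t^{kn+j}}{(kn+j)!} GEC^{0,j,k}_{kn+j,\Gamma,\mathscr R}(x)}{1-\sum_{n \geq 1}
\frac{t^{kn}}{(kn)!} GC^{0,0,k}_{kn,\Gamma,\mathscr R}(x)}.
\end{equation*}
Expanding $\frac{1}{1-\sum_{n\geq 1}\frac{t^{kn}}{(kn)!}GC^{0,0,k}_{kn,\Gamma,\mathscr R}(x)}$ as a geometric series and multiplying by the numerator, the coefficient of $\frac{t^{ks+j}}{(ks+j)!}$ on the right becomes a sum over $m \geq 0$, over compositions $a_1 + \cdots + a_m + b = s$ with $a_i \geq 1$, $b \geq 0$, of $\binom{ks+j}{ka_1,\ldots,ka_m,kb+j}$ times $\left(\prod_{i=1}^m GC^{0,0,k}_{ka_i,\Gamma,\mathscr R}(x)\right) GEC^{0,j,k}_{kb+j,\Gamma,\mathscr R}(x)$. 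So the target reduces to showing that $\sum_{F \in \mathcal{MP}^{0,s,k}_{ks+j,\Gamma,\mathscr R}} x^{m_\Gamma(F)}$ equals this sum.

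Next I would interpret the right-hand side combinatorially, exactly following the pattern of the proof of Theorem \ref{thm:mainGamma}: the multinomial coefficient chooses an ordered set partition $S_1,\ldots,S_m,T$ of $\{1,\ldots,ks+j\}$ with $|S_i| = ka_i$ and $|T| = kb+j$; the factors $GC^{0,0,k}_{ka_i,\Gamma,\mathscr R}(x)$ choose generalized $\Gamma,\mathscr R$-clusters $G_i$ to be filled with $S_i$, and the factor $GEC^{0,j,k}_{kb+j,\Gamma,\mathscr R}(x)$ chooses a generalized $\Gamma,\mathscr R$-end-cluster $H$ to be filled with $T$; concatenating the fillings $E_1 \cdots E_m H$ produces an element of a set $\mathcal{HGEC}_{ks+j,\Gamma,\mathscr R}$ of marked fillings whose last $j$-column-terminated piece reduces to a generalized end-cluster and whose other pieces reduce to ordinary generalized clusters. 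I would then define a sign-reversing involution $\theta$ on $\mathcal{HGEC}_{ks+j,\Gamma,\mathscr R}$ in complete analogy with the involution in the proof of Theorem \ref{thm:mainGamma}: scan $E_1, E_2, \ldots, H$ from left to right for the first piece that either has more than one block in its block decomposition (split off the first block) or is a single block that fails the $\mathscr R$-relation with the first column of the next piece (merge the two pieces). The fixed points are precisely the fillings in which every maximal $\Gamma$-subcluster sits between columns related by $\mathscr R$ and the terminal height-$j$ column is attached with $\mathscr R$ holding across the last junction — i.e. the elements of $\mathcal{MP}^{0,s,k}_{ks+j,\Gamma,\mathscr R}$, classified by their maximal $\Gamma$-subclusters, which gives the identity.

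The one genuinely new wrinkle compared with Theorem \ref{thm:mainGamma} is the asymmetric role of the terminal height-$j$ column, so I expect the main obstacle to be bookkeeping at the right end: I must check that $\theta$ is well defined when the "first bad piece" is the end-cluster $H$ itself (splitting off its first block leaves a strictly shorter end-cluster, and merging the previous single-block piece into $H$ again yields an end-cluster because $B_m$ of height $j$ is untouched), and that the sign $(-1)^{m-1}$ convention in the definition of $w_{\Gamma,\mathscr R}$ on generalized end-clusters meshes with the signs on the ordinary generalized clusters so that split/merge pairs cancel. Once that endpoint case is handled, the argument is formally identical to that of Theorem \ref{thm:mainGamma}, and the fixed-point analysis (every column is either the last column of some piece, hence $\mathscr R$-related to its successor by construction of $\theta$'s fixed points, or interior to a generalized cluster, hence $\mathscr R$-related to its successor by the definition of generalized clusters and generalized end-clusters) shows the fixed points are exactly $\mathcal{MP}^{0,s,k}_{ks+j,\Gamma,\mathscr R}$.
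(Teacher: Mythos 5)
Your proposal is correct and follows essentially the same route as the paper's proof: substitute $x\mapsto x+1$, interpret the coefficient of $\frac{t^{ks+j}}{(ks+j)!}$ as ordered set partitions filled by generalized $\Gamma,\mathscr R$-clusters followed by one generalized end-cluster, and cancel via the same split/merge sign-reversing involution, with the endpoint cases (splitting the first block off the end-cluster, or merging a single-block piece into it) handled exactly as in the paper. The only blemish is a notational slip ($\mathcal{MP}^{0,s,k}_{ks+j,\Gamma,\mathscr R}$ should read $\mathcal{MP}^{0,j,k}_{ks+j,\Gamma,\mathscr R}$), which does not affect the argument.
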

\begin{proof}
Replace $x$ by $x+1$ in (\ref{eq:endprcluster}).  Then 
it easy to see that 
\begin{equation}\label{eq:endprcluster2} 
\sum_{n \geq 0} \frac{t^{kn+j}}{(kn+j)!} 
\sum_{F \in \mathcal{P}^{0,j,k}_{kn+j,\mathscr R}} (x+1)^{\Gmch(F)} = 
\sum_{n \geq 0} \frac{t^{kn+j}}{(kn+j)!} 
\sum_{F \in \mathcal{MP}^{0,j,k}_{kn+j,\Gamma,\mathscr R}} x^{m_\Gamma(F)}.
\end{equation}
Thus we must show that 
\begin{equation}\label{eq:endprcluster3}
 \sum_{n \geq 0} \frac{t^{kn+j}}{(kn+j)!} 
\sum_{F \in \mathcal{MP}^{0,0,k}_{kn,\Gamma,\mathscr R}} x^{m_\Gamma(F)} =
\frac{\sum_{n \geq 0} 
\frac{t^{kn+j}}{(kn+j)!} GEC^{0,j,k}_{kn+j,\Gamma,\mathscr R}(x)}{1-\sum_{n \geq 1} 
\frac{t^{kn}}{(kn)!} GC^{0,0,k}_{kn,\Gamma,\mathscr R}(x)}.
\end{equation}
Now 
\begin{equation}\label{eq:endprcluster33}
\frac{1}{1-\sum_{n \geq 1} \frac{t^{kn}}{(kn)!} GC^{0,0,k}_{kn,\Gamma,\mathscr R}(x)} = 
1+ \sum_{m \geq 1} \left(\sum_{n \geq 1} 
\frac{t^{kn}}{(kn)!} GC^{0,0,k}_{kn,\Gamma,\mathscr R}(x)\right)^m.
\end{equation} 
Taking the coefficient of $\frac{t^{ks+j}}{(ks+j)!}$ on both sides of 
(\ref{eq:endprcluster3}) where $s \geq 0$, we see that we must show that 
\begin{eqnarray}\label{eq:endprcluster4}
\sum_{F \in \mathcal{MP}^{0,j,k}_{ks+j,\mathscr R}} x^{m_\Gamma(F)} &=& 
\left(\left(\sum_{m=1}^\infty \left(\sum_{n \geq 1} 
\frac{t^{kn}}{(kn)!} GC^{0,0,k}_{kn,\Gamma,\mathscr R}(x)\right)^m\right) \times \right.
\nonumber \\
&& \ \ \ \ \ \ \ \left. \left(\sum_{n \geq 0} 
\frac{t^{kn+j}}{(kn+j)!} GEC^{0,j,k}_{kn+j,\Gamma,\mathscr R}(x)\right)\right)|_{\frac{t^{ks+j}}{(ks+j)!}} 
\nonumber\\
&=& \sum_{a+b =s, a,b \geq 0} \binom{ks+j}{ka,kb+j} \left(\sum_{m=1}^\infty \left(\sum_{n \geq 1} 
\frac{t^{kn}}{(kn)!} GC^{0,0,k}_{kn,\Gamma,\mathscr R}(x)\right)^m\right)|_{\frac{t^{ka}}{(ka)!}}  \times \nonumber \\
&& \ \ \ \ \ \ \ \left(\sum_{n \geq 0} 
\frac{t^{kn+j}}{(kn+j)!} GEC^{0,j,k}_{kn+j,\Gamma,\mathscr R}(x)\right)|_{\frac{t^{kb+j}}{(kb+j)!}} \nonumber \\
&=& \sum_{a + b =s, a,b \geq 0} \binom{ks+j}{ka,kb+j} \times \nonumber \\
&&\left( \sum_{m =1}^a \sum_{\overset{a_1+ a_2 + \cdots +a_m=a}{a_i \geq 1}} 
\binom{ka}{ka_1,\ldots,ka_m} \prod_{i=1}^m GC^{0,0,k}_{ka_i,\Gamma,r}(x)\right) GEC^{0,j,k}_{kb+j,\Gamma,\mathscr R}(x) \nonumber \\
&=& \sum_{a + b =s, a,b \geq 0} 
\sum_{m =1}^a \sum_{\overset{a_1+ a_2 + \cdots +a_m=a}{a_i \geq 1}} 
\binom{ks+j}{ka_1,\ldots,ka_m,kb+j} \times \nonumber \\
&&\ \ \ \ \ \ \ \ \ \ \ \ \ \ \ GEC^{0,j,k}_{kb+j,\Gamma,\mathscr R}(x)
\prod_{j=1}^m GC^{0,0,k}_{ka_j,\Gamma,r}(x). 
\end{eqnarray}
The right-hand side of (\ref{eq:endprcluster4}) is now easy to interpret. 
First we pick non-negative integers $a$ and $b$ such that 
$a+b=s$. Then we pick an $m$ such that $1 \leq m \leq a$. Next we 
pick $a_1, \ldots, a_m \geq 1$ such that $a_1+a_2+ \cdots +a_m = a$. 
Next the binomial coefficient $\binom{ks+j}{ka_1,\ldots,ka_m,kb+j}$ allows 
us to pick a sequence of sets $S_1, \ldots, S_m,S_{m+1}$ which partition 
$\{1, \ldots, ks+j\}$ such that $|S_i| =ka_i$ for $i=1, \ldots ,m$ and 
$|S_{m+1}| = kb+j$. The factor $GEC^{0,j,k}_{kb+j,\Gamma,\mathscr R}(x)$ allows 
us to pick a generalized $\Gamma,\mathscr R$-end-cluster $G_{m+1}$ of size $kb+j$ 
with weight $w_{\Gamma,\mathscr R}(G_{m+1})$. Note 
that in the cases where $b=0$, our definitions imply that  
$G_{m+1}$ is just a column of height $j$ filled with the numbers 
$1, \ldots, j$ 
which is increasing, reading from bottom to top. 
Finally the product $\prod_{j=1}^m GC^{0,0,k}_{ka_j,\Gamma,\mathscr R}(x)$ allows 
us to pick generalized $\Gamma,\mathscr R$-clusters $G_i \in \mathcal{GC}^{0,0,k}_{ka_i,\Gamma,\mathscr R}$ for 
$i=1,\ldots, m$ with weight $\prod_{i=1}^m w_{\Gamma,\mathscr R}(G_i)$. Note 
that in the cases where $a_i =1$, our definitions imply that  
$G_i$ is just a column of height $k$ filled with the numbers $1, \ldots, k$ 
which is increasing, reading from bottom to top.

\fig{1.20}{EPRCluster2}{Construction for the right-hand side of (\ref{eq:endprcluster4}).}

For example, suppose that $k=2$ and $j=1$ and $\Gamma = \{P\}$ where 
$P = \begin{array}{|c|c|c|}
\hline 
6 & 5 & 4 \\
\hline
1 & 2 & 3 \\
\hline
\end{array}$.
Suppose that $\mathscr R$ is the 
relation such that for any two increasing columns of 
integers $C$ and $D$, $(C,D) \in \mathscr R$ if and only if 
the top element of $C$ is greater than the bottom element of $D$. 
Then in Figure \ref{fig:EPRCluster2}, we have pictured 
$S_1, S_2, S_3, S_4, S_5,S_6$ which partition $\{1, \ldots, 39\}$ and 
corresponding generalized $\Gamma,\mathscr R$-clusters 
$G_1, \ldots, G_5$ and a general $\Gamma,\mathscr R$-end-cluster $G_6$. 
For each $i$, 
we have indicated the separation between the blocks of $G_i$ by dark black 
lines.  Then for each $i =1, 
\ldots, m+1$, we create a cluster $E_i$ which results by replacing 
each  $j$ in $G_i$ by the $j^{th}$ smallest element of $S_i$.  If we 
concatenate $E_1 \ldots E_6$ together, then we will obtain 
an element of $Q \in \mathcal{M}_{ks+j,\Gamma}^{0,j,k}$. The weight 
of $Q$ equals $\prod_{j=1}^6 w_{\Gamma,\mathscr R}(G_i)$ where 
\begin{eqnarray*}
w_{\Gamma,\mathscr R}(G_1) &=& (-1)^2 x, \\
w_{\Gamma,\mathscr R}(G_2) &=& 1, \\
w_{\Gamma,\mathscr R}(G_3) &=& x, \\
w_{\Gamma,\mathscr R}(G_4) &=& (-1)^1 x^2, \\ 
w_{\Gamma,\mathscr R}(G_5) &=& 1, \ \mbox{and} \\
w_{\Gamma,\mathscr R}(G_6) &=& x.   \\
\end{eqnarray*}
In Figure \ref{fig:EPRCluster2}, we have indicated the boundaries between the $_i$s by 
light lines. 

We let $\mathcal{HGEC}_{ks+j,\Gamma,\mathscr R}$ denote the set of all elements that can be constructed 
in this way. Thus $Q =E_1 \ldots E_mE_{m+1}$ is an element of 
$\mathcal{HGEC}_{ks+j,\Gamma,\mathscr R}$ if and only if for each $i =1, \ldots, m$, 
$\red(E_i)$ is a generalized $\Gamma,\mathscr R$-cluster and 
$\red(E_{m+1})$ is a generalized $\Gamma,\mathscr R$-end-cluster. Next we define a sign 
reversing involution $\theta:\mathcal{HGEC}_{ks+j,\Gamma,\mathscr R} \rightarrow \mathcal{HGEC}_{ks+j,\Gamma,\mathscr R}$. 
Given $Q =E_1 \ldots E_m E_{m+1} \in \mathcal{HGEC}_{ks+j,\Gamma,\mathscr R}$, look for the first $i$ such that either 
\begin{enumerate}
\item the block structure of $\red(E_i) = B^{(i)}_1 \ldots B^{(i)}_{k_i}$ consists 
of more than one block or 
\item $E_i$ consists of a single block $B^{(i)}_1$ and $(\last(B^{(i)}_1),\first(E_{i+1}))$ is 
not in $\mathscr R$.
\end{enumerate}
In case (1),  if $i \leq m$, we let $\theta(E_1 \ldots E_{m+1})$ 
be the result of  replacing $E_i$ by two generalized $\Gamma,\mathscr R$-clusters, $E_i^*$ and 
$E_i^{**}$, where $E_i*$ consists just of $B^{(i)}_1$ and $E_i^{**}$ consists 
of $B^{(i)}_2 \ldots B^{(i)}_{k_i}$. If $i = m+1$, we let $\theta(E_1 \ldots E_{m+1})$ 
be the result of  replacing $E_{m+1}$ by a generalized $\Gamma,\mathscr R$-cluster $E_{m+1}^*$ followed by  
a generalized $\Gamma,\mathscr R$-end-cluster 
$E_{m+1}^{**}$ where $E_{m+1}*$ consists just of $B^{(m+1)}_1$ and $E_{m+1}^{**}$ consists 
of $B^{(m+1)}_2 \ldots B^{(m+1)}_{k_{m+1}}$.  Note that in either case, 
it is easy to check that 
$w_{\Gamma,\mathscr R}(E_i) = -w_{\Gamma,\mathscr R}(E_i^*)w_{\Gamma,\mathscr R}(E_i)^{**}$. 
In case (2), if $i < m$, we let $\theta(E_1 \ldots E_{m+1})$ 
be the result of  replacing  $E_i$ and $E_{i+1}$ by the single generalized 
$\Gamma,\mathscr R$-cluster $E= B^{(i)}_1 B^{(i+1)}_1 \ldots B^{(i+1)}_{k_{i+1}}$. 
If $i =m$, we let $\theta(E_1 \ldots E_{m+1})$ 
be the result of  replacing  $E_m$ and $E_{m+1}$ by the single generalized 
$\Gamma,\mathscr R$-end-cluster $E= B^{(m)}_1 B^{(m+1)}_1 \ldots B^{(m+1)}_{k_{m+1}}$. 
In either 
case,  it is easy to check that $w_{\Gamma,\mathscr R}(E_i) w_{\Gamma,\mathscr R}(E_{i+1}) = -w_{\Gamma,\mathscr R}(E)$.
If neither case (1) or case (2) applies, then we let $\theta(E_1 \ldots E_{m+1}) =
E_1 \ldots E_{m+1}$.  For example, suppose that $\mathscr R$ is the binary 
relation where for any two increasing columns of integers $C$ and $D$, $(C,D) \in \mathscr R$ if and only if 
the top element of $C$ is greater than the bottom element of $D$ and $\Gamma =\{P\}$ where 
$P = \begin{array}{|c|c|c|}
\hline 
6 & 5 & 4 \\
\hline
1 & 2 & 3 \\
\hline
\end{array}$.
Then  if $Q=E_1 \ldots E_6$ is the 
generalized $\Gamma,\mathscr R$-cluster pictured in Figure \ref{fig:EPRCluster2}, 
then we are in case (1) with $i=1$ since $E_1$ consists of more than one block. 
Thus $\theta(Q)$ results by breaking that generalized $\Gamma,\mathscr R$-cluster into 
two clusters $E_1^*$ of size 1 and $E^{**}_1$ of size 4. $\theta(Q)$ is 
pictured in Figure \ref{fig:EPRCluster3}.

\fig{1.20}{EPRCluster3}{The involution $\theta$.}

It is easy to see that $\theta$ is an involution. That is, if 
$Q=E_1 \ldots E_{m+1}$ is in case (1) using $E_i$, then $\theta(Q)$ will be 
in case (2) using $E_i^*$ and $E_i^{**}$.  Similarly 
if $Q=E_1 \ldots E_{m+1}$ is in case (1) using $E_i$ and $E_{i+1}$, then $\theta(Q)$ will be 
in case (2) using $E =E_iE_{i+1}$. It follows that if 
$\theta (E_1 \ldots E_{m+1}) \neq E_1 \ldots E_{m+1}$, then 
$w_{\Gamma,\mathscr R}(E_1 \ldots E_{m+1}) = - w_{\Gamma,\mathscr R}(\theta(E_1 \ldots E_{m+1}))$ so 
that the right-hand side of (\ref{eq:prcluster4}) equals 
$$\sum_{Q = E_1 \ldots E_{m+1} \in \mathcal{HGEC}_{ks+j,\Gamma,\mathscr R}, \theta(Q) =Q}
\prod_{i=1}^{m+1} w_{\Gamma,\mathscr R}(E_i).$$
Thus we must examine the fixed points of $\theta$.  

If $Q = E_1 \ldots E_{m+1} \in \mathcal{HGEC}_{ks+j,\Gamma,\mathscr R}$ and $\theta(Q) =Q$, 
then it must be the case that for each $i \leq m$, 
$E_i$ consists of single column of weight 1 or 
it reduces to generalized 
$\Gamma,\mathscr R$-cluster $\overline{E}_i$ consisting of a single block $B^{(i)}_1$
whose weight is the weight of $\red(B^{(i)}_1)$ as
a $\Gamma$-cluster. Moreover, it must be the case that for all $i=1, \ldots m-1$, 
$(\last(E_i),\first(E_{i+1})$ is in $\mathscr R$. Similarly, $E_{m+1}$ must consists of 
a single column of height $j$ and $(\last(E_m),E_{m+1})$ must be in $\mathscr R$. 
But this means for all $j =1, \ldots, s-1$, 
$(Q[j],Q[j+1])$ is in $\mathscr R$.  That is, either $Q[j]$ equals $\last(E_i)$ for some $i$ 
or column $j$ is contained in one of the $\Gamma$-clusters $E_i$ in which 
case $(Q[j],Q[j+1])$ is in $\mathscr R$ by our definition of generalized $\Gamma,\mathscr R$-clusters. Thus any 
fixed point $Q$ of $\theta$ is an element $\mathcal{MP}^{0,j,k}_{ks+j,\Gamma,\mathscr R}$. Then just like 
our proof Theorem \ref{thm:Gammacluster}, it follows that  $E_1, \ldots, E_m$ are just 
the maximal $\Gamma$-subclusters of an element in $\mathcal{P}^{0,j,k}_{kn,\mathscr R}$. 
Vice versa, if $T =F_1 \ldots F_rF_{r+1}$ is an element of $\mathcal{P}^{0,0,k}_{ks+j,\mathscr R}$ 
where $F_1, \ldots, F_r$ are the maximal $\Gamma$-subclusters of $T$ and $F_{r+1}$ is 
the last column of height $j$, then 
$T = F_1 \ldots F_r$ is a fixed point of $\theta$. Thus we have proved 
that the right-hand side of (\ref{eq:endprcluster4}) equals 
$$\sum_{F \in \mathcal{MP}^{0,j,k}_{ks+j,\Gamma,\mathscr R}} x^{m_\Gamma(F)}$$
which is what we wanted to prove. 
\end{proof}

\subsection{Generalized Clusters for fillings 
of $D^{i,0,k}_{i+kn}$.}

In this subsection, we shall extend the generalized cluster 
method to deal with various types of fillings of 
$D^{i,0,k}_{i+kn}$. Fix a set of patterns 
$\Gamma \subseteq \mathcal{P}^{0,0,k}_{tk}$ where $t \geq 2$.
We let $\mathcal{MP}^{i,0,k}_{i+kn,\Gamma}$ denote the set of elements that 
arise by starting with an element $F$ of $\mathcal{P}^{i,0,k}_{i+kn}$ and 
marking some of the $\Gamma$-matches in $F$ by placing on $x$ on the column 
which starts the $\Gamma$-match. Given an element 
$F \in \mathcal{MP}^{i,0,k}_{i+kn,\Gamma}$, we let $m_{\Gamma}(F)$ denote 
the number of marked $\Gamma$-matches in $F$. 

To find an extension of Theorem 
\ref{thm:mainGamma} for these types of arrays, we need 
to define a special type of generalized cluster which we 
call a generalized start cluster. That is, suppose that we are given a 
binary relation $\mathscr R$ 
on columns of integers. 

\begin{definition}
We say that $Q \in \mathcal{MP}^{i,0,k}_{i+kn,\Gamma}$ is a 
{\bf generalized $\Gamma,\mathscr R$-start-cluster} if 
we can write $Q=B_1B_2\cdots B_m$ where $B_i$ are blocks 
of consecutive columns 
in $Q$ such that  
\begin{enumerate}
\item $B_1$ is a single column of height $i$, 	
\item for $2\leq a \leq m$, 
either $B_a$ is a single column or $B_a$ consists of $r$-columns 
where $r \geq 2$, $\red(B_a)$ is a $\Gamma$-cluster in $\mathcal{MP}_{kr,\Gamma}$, 
and any pair of consecutive columns in $B_i$ are in $\mathscr R$ and
	\item for $1\leq i\leq m-1$, the pair $(\last(B_i),\first(B_{i+1})$ is 
not in $\mathscr R$.  
\end{enumerate}
\end{definition}
Let $\mathcal{GSC}^{i,0,k}_{i+kn,\Gamma,\mathscr R}$ denote the set of all generalized 
$\Gamma,\mathscr R$-start-clusters which start with a column of height $i$ and 
which is followed by $n$ columns of height $k$. 
Given $Q =B_1 B_2 \ldots B_m \in \mathcal{GEC}^{i,0,k}_{i+kn,\Gamma,\mathscr R}$, we 
define the weight of $B_i$, $w_{\Gamma,\mathscr R}(B_i)$, to be 1 if $B_i$ is a single column and 
$x^{m_{\Gamma}(\red(B_i))}$ if $B_i$ is order isomorphic to a $\Gamma$-cluster. Then we 
define the weight of $Q$, $w_{\Gamma,\mathscr R}(Q)$, to be 
$(-1)^{m-1}\prod_{i=1}^m w_{\Gamma,\mathscr R}(B_i)$. We 
let 
\begin{equation}\label{GSPRC}
GSC^{i,0,k}_{i+kn,\Gamma,\mathscr R}(x) = 
\sum_{Q \in \mathcal{GSC}^{i,0,,k}_{i+kn,\Gamma,\mathscr R}} w_{\Gamma,\mathscr R}(Q). 
\end{equation}
Let $\mathcal{P}^{i,0,k}_{i+kn,\mathscr R}$ denote the set of all elements 
$F \in \mathcal{P}^{i,0,k}_{i+kn}$ such that the relation $\mathscr R$ holds 
for any pair of consecutive columns in $F$. 
We let $\mathcal{MP}^{i,0,k}_{i+kn,\Gamma,\mathscr R}$ denote the set of elements that 
arise by starting with an element $F$ of $\mathcal{P}^{i,0,k}_{i+kn,\mathscr R}$ and 
marking some of the $\Gamma$-matches in $F$ by placing on $x$ on the column 
which starts the $\Gamma$-match.

Then we have the following theorem. 

\begin{theorem}\label{thm:imainGamma}
Let $\mathscr R$ be a binary relation on pairs of columns $(C,D)$  
which are filled 
with integers which are increasing from bottom to top. 
 Let $\Gamma \subseteq \mathcal{P}^{0,0,k}_{tk}$ where $t \geq 2$. Then 
\begin{equation}\label{eq:stprcluster}
\sum_{n \geq 0} \frac{t^{i+kn}}{(i+kn)!} 
\sum_{F \in \mathcal{P}^{i,0,k}_{i+kn,\mathscr R}} x^{\Gmch(F)} =  
\frac{\sum_{n \geq 0} 
\frac{t^{i+kn}}{(i+kn)!} GSC^{i,0,k}_{i+kn,\Gamma,\mathscr R}(x-1)}{1-\sum_{n \geq 1} 
\frac{t^{kn}}{(kn)!} GC^{0,0,k}_{kn,\Gamma,\mathscr R}(x-1)}.
\end{equation}
\end{theorem}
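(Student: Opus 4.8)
The plan is to mirror the proof of Theorem~\ref{thm:jmainGamma}, the only structural change being that the distinguished column of height $i$ now sits at the \emph{left} end of each array rather than a column of height $j$ at the right end. First I would replace $x$ by $x+1$ in \eqref{eq:stprcluster}; arguing exactly as for \eqref{eq:endprcluster2}--\eqref{eq:endprcluster3}, the left-hand side becomes $\sum_{n\geq 0}\frac{t^{i+kn}}{(i+kn)!}\sum_{F\in\mathcal{MP}^{i,0,k}_{i+kn,\Gamma,\mathscr R}}x^{m_\Gamma(F)}$, so it suffices to prove
\[
\sum_{n\geq 0}\frac{t^{i+kn}}{(i+kn)!}\sum_{F\in\mathcal{MP}^{i,0,k}_{i+kn,\Gamma,\mathscr R}}x^{m_\Gamma(F)}=\frac{\sum_{n\geq 0}\frac{t^{i+kn}}{(i+kn)!}GSC^{i,0,k}_{i+kn,\Gamma,\mathscr R}(x)}{1-\sum_{n\geq 1}\frac{t^{kn}}{(kn)!}GC^{0,0,k}_{kn,\Gamma,\mathscr R}(x)}.
\]

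Next I would expand the right-hand side as $\big(\sum_{n\geq 0}\frac{t^{i+kn}}{(i+kn)!}GSC^{i,0,k}_{i+kn,\Gamma,\mathscr R}(x)\big)\cdot\big(1+\sum_{m\geq 1}(\sum_{n\geq 1}\frac{t^{kn}}{(kn)!}GC^{0,0,k}_{kn,\Gamma,\mathscr R}(x))^m\big)$ and extract the coefficient of $\frac{t^{i+ks}}{(i+ks)!}$. Exactly as in the chain of equalities producing \eqref{eq:endprcluster4}, this coefficient equals
\[
\sum_{\substack{a+b=s\\ a,b\geq 0}}\ \sum_{m\geq 0}\ \sum_{\substack{a_1+\cdots+a_m=a\\ a_\ell\geq 1}}\binom{i+ks}{ka_1,\dots,ka_m,i+kb}\,GSC^{i,0,k}_{i+kb,\Gamma,\mathscr R}(x)\prod_{\ell=1}^{m}GC^{0,0,k}_{ka_\ell,\Gamma,\mathscr R}(x),
\]
which I would read combinatorially: partition $\{1,\dots,i+ks\}$ into sets $S_1,\dots,S_m$ of sizes $ka_1,\dots,ka_m$ together with a set $S_0$ of size $i+kb$; pick generalized $\Gamma,\mathscr R$-clusters $G_\ell\in\mathcal{GC}^{0,0,k}_{ka_\ell,\Gamma,\mathscr R}$ and a generalized $\Gamma,\mathscr R$-start-cluster $G_0$ of size $i+kb$; relabel $G_\ell$ (resp.\ $G_0$) by replacing its $p$-th smallest entry by the $p$-th smallest element of $S_\ell$ (resp.\ $S_0$) to obtain $E_\ell$ (resp.\ $E_0$); and form $Q=E_0E_1\cdots E_m$ with weight $\prod_{i=0}^{m}w_{\Gamma,\mathscr R}(E_i)$. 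When $b=0$ the factor $GSC^{i,0,k}_{i,\Gamma,\mathscr R}(x)$ forces $E_0$ to be a single increasing column of height $i$. Let $\mathcal{HGSC}_{i+ks,\Gamma,\mathscr R}$ denote the set of all $Q=E_0E_1\cdots E_m$ so obtained, i.e.\ those for which $\red(E_0)$ is a generalized $\Gamma,\mathscr R$-start-cluster and each $\red(E_i)$ ($i\geq 1$) is a generalized $\Gamma,\mathscr R$-cluster.

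Then I would define a sign-reversing involution $\theta$ on $\mathcal{HGSC}_{i+ks,\Gamma,\mathscr R}$ by scanning $E_0,E_1,\dots,E_m$ from left to right for the first index $i$ at which either (1) the block structure $\red(E_i)=B^{(i)}_1\cdots B^{(i)}_{k_i}$ has $k_i\geq 2$, or (2) $E_i$ is a single block and $(\last(E_i),\first(E_{i+1}))\notin\mathscr R$. In case (1), $\theta$ replaces $E_i$ by the two consecutive clusters $E_i^{*}$ (consisting of $B^{(i)}_1$) and $E_i^{**}$ (consisting of $B^{(i)}_2\cdots B^{(i)}_{k_i}$); when $i=0$ this means $E_0^{*}$ is the lone column of height $i$, itself a trivial generalized start-cluster, and $E_0^{**}$ is an ordinary generalized $\Gamma,\mathscr R$-cluster. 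In case (2), $\theta$ merges $E_i$ and $E_{i+1}$ into the single block-sequence $B^{(i)}_1B^{(i+1)}_1\cdots B^{(i+1)}_{k_{i+1}}$, which reduces to a generalized $\Gamma,\mathscr R$-start-cluster when $i=0$ and to an ordinary generalized $\Gamma,\mathscr R$-cluster otherwise, because the hypothesis $(\last(E_i),\first(E_{i+1}))\notin\mathscr R$ supplies the required block-boundary condition. If neither case applies we set $\theta(Q)=Q$. As in the proof of Theorem~\ref{thm:jmainGamma}, $\theta$ interchanges case (1) and case (2), hence is an involution, and a bookkeeping of the $(-1)$-exponents shows $w_{\Gamma,\mathscr R}(\theta(Q))=-w_{\Gamma,\mathscr R}(Q)$ whenever $\theta(Q)\neq Q$; thus the coefficient above reduces to the weighted count of fixed points of $\theta$.

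Finally I would identify those fixed points. A fixed point $Q=E_0E_1\cdots E_m$ has $E_0$ equal to the lone height-$i$ column, each $E_i$ ($i\geq 1$) equal to either a single increasing height-$k$ column of weight $1$ or a single-block $\Gamma$-cluster, and $(\last(E_i),\first(E_{i+1}))\in\mathscr R$ for every $i$; this last condition, together with the defining property of a generalized $\Gamma,\mathscr R$-cluster, forces $(Q[p],Q[p+1])\in\mathscr R$ for every pair of consecutive columns, so $Q\in\mathcal{MP}^{i,0,k}_{i+ks,\Gamma,\mathscr R}$, and, just as in the proof of Theorem~\ref{thm:Gammacluster}, $E_1,\dots,E_m$ are precisely the maximal $\Gamma$-subclusters of $Q$ while $E_0$ is its leading column. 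Conversely any element of $\mathcal{MP}^{i,0,k}_{i+ks,\Gamma,\mathscr R}$, decomposed into its leading height-$i$ column followed by its maximal $\Gamma$-subclusters, is a fixed point of weight $x^{m_\Gamma(F)}$, so the coefficient equals $\sum_{F\in\mathcal{MP}^{i,0,k}_{i+ks,\Gamma,\mathscr R}}x^{m_\Gamma(F)}$, as desired. The only step that requires genuine care beyond transcribing the earlier argument is the behavior of $\theta$ at $i=0$: one must check that splitting the leading column off $E_0$ leaves a legitimate generalized start-cluster, and dually that merging the leading column with the block following it again produces one --- the latter being exactly where the boundary condition $(\last(E_0),\first(E_1))\notin\mathscr R$ is used, in complete analogy with the ordinary cluster merges.
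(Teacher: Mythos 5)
Your proposal is correct and follows exactly the route the paper intends: the paper omits this proof, stating only that it is "very similar to the proof of Theorem \ref{thm:jmainGamma}," and your argument is precisely that analogue, with the start-cluster factor placed on the left, the same multinomial expansion, the same sign-reversing involution $\theta$, and the correct identification of fixed points with elements of $\mathcal{MP}^{i,0,k}_{i+ks,\Gamma,\mathscr R}$ decomposed into the leading height-$i$ column followed by maximal $\Gamma$-subclusters. The one point you flag as needing care --- that splitting off or merging the leading column preserves the generalized start-cluster structure --- is indeed the only place the argument differs from the end-cluster case, and you handle it correctly.
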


The proof of Theorem \ref{thm:imainGamma} is very similar to the proof of 
Theorem \ref{thm:jmainGamma} so that we shall not give the details.

\subsection{Generalized Clusters for fillings of $D^{i,j,k}_{i+kn+j}$.}

In this subsection, we shall combine the results of the 
previous two subsections 
to extend the generalized cluster 
method to deal with various types of fillings of 
$D^{i,j,k}_{i+kn+j}$. Fix a set of patterns 
$\Gamma \subseteq \mathcal{P}^{0,0,k}_{tk}$ where $t \geq 2$.
We let $\mathcal{MP}^{i,j,k}_{i+kn+j,\Gamma}$ denote the set of elements that 
arise by starting with an element $F$ of $\mathcal{P}^{i,j,k}_{i+kn+j}$ and 
marking some of the $\Gamma$-matches in $F$ by placing on $x$ on the column 
which starts the $\Gamma$-match. Given an element 
$F \in \mathcal{MP}^{i,j,k}_{i+kn+j,\Gamma}$, we let $m_{\Gamma}(F)$ denote 
the number of marked $\Gamma$-matches in $F$. Clearly, neither the first or the last column of $F$ is contained in a $\Gamma$-match.

To find an extension of Theorem 
\ref{thm:mainGamma} for fillings of $D_{kn+j}^{0,j,k}$, we defined generalized end clusters and  for fillings of $D_{kn+i}^{i,0,k}$, we defined generalized start clusters. However, they are not enough to extend Theorem 
\ref{thm:mainGamma} for fillings of $D_{i+kn+j}^{i,j,k}$. Next
we define a special type of generalized cluster which we 
call a generalized start-end cluster. That is, suppose that we are given a binary relation $\mathscr R$ on columns of integers.

\begin{definition}
	We say that $Q \in \mathcal{MP}^{i,j,k}_{i+kn+j,\Gamma}$ is a 
	{\bf generalized $\Gamma,\mathscr R$-start-end-cluster} if 
	we can write $Q=B_1B_2\cdots B_m$ where $B_i$ are blocks 
	of consecutive columns 
	in $Q$ such that  
	\begin{enumerate}
		\item $m\geq 2$,
		\item $B_1$ is a column of height $i$, 
		\item $B_m$ is a column of height $j$, 	
		\item for $1<i < m$, 
		either $B_i$ is a single column or $B_i$ consists of $r$-columns 
		where $r \geq 2$, $\red(B_i)$ is a $\Gamma$-cluster in $\mathcal{MP}_{kr,\Gamma}$, 
		and any pair of consecutive columns in $B_i$ are in $\mathscr R$ and
		\item for $1\leq i\leq m-1$, the pair $(\last(B_i),\first(B_{i+1}))$ is 
		not in $\mathscr R$.  
	\end{enumerate}
\end{definition}

Let $\mathcal{GSEC}^{i,j,k}_{i+kn+j,\Gamma,\mathscr R}$ denote the set of all generalized $\Gamma,\mathscr R$-start-end-clusters which have $n$ columns of height $k$ between a column of height $i$ and a column of height $j$. 
Given $Q =B_1 B_2 \ldots B_m \in \mathcal{GSEC}^{i,j,k}_{i+kn+j,\Gamma,\mathscr R}$, we 
define the weight of $B_i$, $\omega_{\Gamma,\mathscr R}(B_i)$, to be 1 if $B_i$ is a single column and 
$x^{m_{\Gamma}(\red(B_i))}$ if $B_i$ is order isomorphic to a $\Gamma$-cluster. Then we 
define the weight of $Q$, $\omega_{\Gamma,\mathscr R}(Q)$, to be 
$(-1)^{m-1}\prod_{i=1}^m \omega_{\Gamma,\mathscr R}(B_i)$. We 
let 
\begin{equation}
GSEC^{0,j,k}_{kn+j,\Gamma,\mathscr R}(x) = 
\sum_{Q \in \mathcal{GSEC}^{i,j,k}_{i+kn+j,\Gamma,\mathscr R}} \omega_{\Gamma,\mathscr R}(Q).
\end{equation}
Let $\mathcal{P}^{i,j,k}_{i+kn+j,\mathscr R}$ denote the set of all elements 
$F \in \mathcal{P}^{i,j,k}_{i+kn+j}$ such that the relation $\mathscr R$ holds 
for any pair of consecutive columns in $F$. 
We let $\mathcal{MP}^{i,j,k}_{i+kn+j,\Gamma,\mathscr R}$ denote the set of elements that 
arise by starting with an element $F$ of $\mathcal{P}^{i,j,k}_{i+kn+j,\mathscr R}$ and 
marking some of the $\Gamma$-matches in $F$ by placing an $x$ on the column 
which starts the $\Gamma$-match.

Then we have the following theorem. 

\begin{theorem}\label{thm:ijmainGamma}
	Let $\mathscr R$ be a binary relation on pairs of columns $(C,D)$  
	which are filled 
	with integers which are increasing from bottom to top. 
	Let $\Gamma \subseteq \mathcal{P}^{0,0,k}_{tk}$ where $t \geq 2$. Then 
	\begin{multline}\label{eqn:seprcluster}
	\sum_{n \geq 0} \frac{t^{i+kn+j}}{(i+kn+j)!} 
	\sum_{F \in \mathcal{P}^{i,j,k}_{i+kn+j,\mathscr R}} x^{\Gmch(F)} =  \\
	\frac{\left(\sum_{n \geq 0} 
		\frac{t^{i+kn}}{(i+kn)!} GSC^{i,0,k}_{i+kn,\Gamma,\mathscr R}(x-1)\right)\left(\sum_{n \geq 0} 
		\frac{t^{kn+j}}{(kn+j)!} GEC^{0,j,k}_{kn+j,\Gamma,\mathscr R}(x-1)\right)}{1-\sum_{n \geq 1} 
		\frac{t^{kn}}{(kn)!} GC^{0,0,k}_{kn,\Gamma,\mathscr R}(x-1)}\\
	+\sum_{n \geq 0} 
	\frac{t^{i+kn+j}}{(i+kn+j)!} GSEC^{i,j,k}_{i+kn+j,\Gamma,\mathscr R}(x-1).
	\end{multline}
\end{theorem}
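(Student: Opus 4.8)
The plan is to follow the same substitute-and-interpret strategy used in the proofs of Theorems~\ref{thm:mainGamma} and~\ref{thm:jmainGamma}. First I would replace $x$ by $x+1$ throughout \eqref{eqn:seprcluster}, so that the left-hand side becomes $\sum_{n\geq 0}\frac{t^{i+kn+j}}{(i+kn+j)!}\sum_{F\in\mathcal{MP}^{i,j,k}_{i+kn+j,\Gamma,\mathscr R}}x^{m_\Gamma(F)}$, exactly as in \eqref{eq:endprcluster2}. Thus it suffices to show that this generating function equals
\begin{multline*}
\frac{\left(\sum_{n\geq 0}\frac{t^{i+kn}}{(i+kn)!}GSC^{i,0,k}_{i+kn,\Gamma,\mathscr R}(x)\right)\left(\sum_{n\geq 0}\frac{t^{kn+j}}{(kn+j)!}GEC^{0,j,k}_{kn+j,\Gamma,\mathscr R}(x)\right)}{1-\sum_{n\geq 1}\frac{t^{kn}}{(kn)!}GC^{0,0,k}_{kn,\Gamma,\mathscr R}(x)}\\
+\sum_{n\geq 0}\frac{t^{i+kn+j}}{(i+kn+j)!}GSEC^{i,j,k}_{i+kn+j,\Gamma,\mathscr R}(x).
\end{multline*}
The key structural observation is a dichotomy for an element $F\in\mathcal{MP}^{i,j,k}_{i+kn+j,\Gamma,\mathscr R}$ (in fact for any filling of $D^{i,j,k}_{i+kn+j}$ with marked $\Gamma$-matches): either there is no marked $\Gamma$-match that spans columns touching both the leading column of height $i$ and the trailing column of height $j$ in a single chain---so that the leading structure and the trailing structure can be separated---or the whole filling collapses into a single generalized start-end cluster. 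This is why the answer is a \emph{sum} of two terms rather than a single quotient.

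Concretely, I would take the coefficient of $\frac{t^{i+ks+j}}{(i+ks+j)!}$ on both sides, turning the identity into a finite sum over compositions, just as in \eqref{eq:endprcluster4}. For the first (quotient) term, the combinatorial interpretation is as before: choose nonnegative integers $a+b+c=s$, partition $\{1,\dots,i+ks+j\}$ into sets $S_0$ (of size $i+ka$) receiving a generalized $\Gamma,\mathscr R$-start-cluster, sets $S_1,\dots,S_m$ (of sizes $ka_1,\dots,ka_m$ summing to $kb$) receiving generalized $\Gamma,\mathscr R$-clusters, and a set $S_{m+1}$ (of size $kc+j$) receiving a generalized $\Gamma,\mathscr R$-end-cluster; relabel and concatenate to get an element of a ``hybrid'' set $\mathcal{HGSEC}_{i+ks+j,\Gamma,\mathscr R}$ of sequences $E_0 E_1\cdots E_m E_{m+1}$ with $\red(E_0)$ a start-cluster, each $\red(E_\ell)$ a cluster, and $\red(E_{m+1})$ an end-cluster. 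The second term contributes the ``degenerate'' sequences consisting of a single generalized start-end cluster (the case $m=0$ where the leading and trailing pieces are already fused). I would then define a sign-reversing involution $\theta$ on $\mathcal{HGSEC}_{i+ks+j,\Gamma,\mathscr R}\sqcup\mathcal{GSEC}^{i,j,k}_{i+ks+j,\Gamma,\mathscr R}$ by the same rule as in the proof of Theorem~\ref{thm:jmainGamma}: scan left to right for the first $E_\ell$ whose reduced block structure has more than one block (split it, using the appropriate one of start/ordinary/end/start-end cluster for the two pieces depending on whether $\ell=0$, $0<\ell\leq m$, or $\ell=m+1$), or the first place where $(\last(B_1^{(\ell)}),\first(E_{\ell+1}))\notin\mathscr R$ (merge $E_\ell$ and $E_{\ell+1}$). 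The weight-sign bookkeeping is identical to the two proofs already given, so $w_{\Gamma,\mathscr R}$ changes sign under $\theta$ whenever $\theta$ moves a point, and the surviving sum is over fixed points.

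The fixed points are precisely the $T=F_0 F_1\cdots F_r F_{r+1}\in\mathcal{P}^{i,j,k}_{i+ks+j,\mathscr R}$ (with marks) where $F_0$ is the leading column of height $i$, $F_{r+1}$ is the trailing column of height $j$, and $F_1,\dots,F_r$ are the maximal $\Gamma$-subclusters: the "no split" condition forces each interior $F_\ell$ to be a single block, and the "no merge" condition forces $(\last(F_\ell),\first(F_{\ell+1}))\in\mathscr R$ for every consecutive pair, which (together with the $\mathscr R$-condition internal to each cluster block, built into the definition of generalized $\Gamma,\mathscr R$-cluster) is exactly the statement that $T\in\mathcal{MP}^{i,j,k}_{i+ks+j,\Gamma,\mathscr R}$. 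The only genuinely new bookkeeping point---and the step I expect to be the main obstacle---is handling the degenerate stratum correctly: when $r=0$, i.e.\ when a single generalized start-end cluster already spans from the height-$i$ column to the height-$j$ column, such an object is \emph{not} produced by the quotient factor (whose end-clusters and start-clusters never fuse across the middle), which is exactly why the extra additive term $\sum_{n\geq 0}\frac{t^{i+kn+j}}{(i+kn+j)!}GSEC^{i,j,k}_{i+kn+j,\Gamma,\mathscr R}(x)$ is present; I would need to verify that $\theta$ maps this stratum into itself (a start-end cluster with $>1$ interior structure splits into a start-cluster followed by an end-cluster, landing back in the hybrid set, and conversely) so that the two strata combine cleanly, and that no double counting occurs at the boundary $b=0$ where the ordinary-cluster product is empty and a start-cluster abuts an end-cluster directly. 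Once that is checked, the right-hand side reduces to $\sum_{F\in\mathcal{MP}^{i,j,k}_{i+ks+j,\Gamma,\mathscr R}}x^{m_\Gamma(F)}$, completing the proof.
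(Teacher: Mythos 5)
Your proposal is correct and follows essentially the same route as the paper's own proof: substitute $x\mapsto x+1$, interpret the coefficient of $\frac{t^{i+ks+j}}{(i+ks+j)!}$ as a sum over a hybrid set of start-cluster/cluster/end-cluster sequences together with a separate stratum of single generalized start-end-clusters, and run the usual split/merge sign-reversing involution, with the start-end-cluster stratum pairing against hybrid elements in which the height-$i$ column and the following end-cluster fail to satisfy $\mathscr R$. You also correctly identify the one genuinely new point---that the $GSEC$ term exists precisely so that this fuse/unfuse step has somewhere to land---which is exactly the issue the paper singles out before defining its Cases 1 and 2.
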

\begin{proof}
 Replace $x$ by $x+1$ in (\ref{eqn:seprcluster}).  Then  
it easy to see that 
\begin{equation}\label{eqn:seprcluster2} 
\sum_{n \geq 0} \frac{t^{i+kn+j}}{(i+kn+j)!} 
\sum_{F \in \mathcal{P}^{i,j,k}_{i+kn+j,\Gamma,\mathscr R}} (x+1)^{\Gmch(F)} = 
\sum_{n \geq 0} \frac{t^{i+kn+j}}{(i+kn+j)!} 
\sum_{F \in \mathcal{MP}^{i,j,k}_{i+kn+j,\Gamma,\mathscr R}} x^{m_\Gamma(F)}.
\end{equation}

Thus we must show that 
\begin{multline}\label{eqn:seprcluster3}
\sum_{n \geq 0} \frac{t^{kn+j}}{(kn+j)!} 
\sum_{F \in \mathcal{MP}^{0,0,k}_{kn,\Gamma, \mathscr R}} x^{m_\Gamma(F)} =  \\
\frac{\left(\sum_{n \geq 0} 
	\frac{t^{i+kn}}{(i+kn)!} GSC^{i,0,k}_{i+kn,\Gamma,\mathscr R}(x)\right)\left(\sum_{n \geq 0} 
	\frac{t^{kn+j}}{(kn+j)!} GEC^{0,j,k}_{kn+j,\Gamma,\mathscr R}(x)\right)}{1-\sum_{n \geq 1} 
	\frac{t^{kn}}{(kn)!} GC^{0,0,k}_{kn,\Gamma,\mathscr R}(x)}\\
+\sum_{n \geq 0} 
\frac{t^{i+kn+j}}{(i+kn+j)!} GSEC^{i,j,k}_{i+kn+j,\Gamma,\mathscr R}(x).
\end{multline}

Now we rewrite the right-hand side of (\ref{eqn:seprcluster3}) as
\begin{multline}\label{eqn:seprcluster33}
\left(\left(\sum_{n \geq 0} 
	\frac{t^{kn+j}}{(kn+j)!} GSC^{i,0,k}_{i+kn,\Gamma,\mathscr R}(x)\right)\left(\sum_{n \geq 0} 
	\frac{t^{kn+j}}{(kn+j)!} GEC^{0,j,k}_{kn+j,\Gamma,\mathscr R}(x)\right)\right.\\
\left.\times \sum_{m\geq 0}\left(\sum_{n \geq 1} 
	\frac{t^{kn}}{(kn)!} GC^{0,0,k}_{kn,\Gamma,\mathscr R}(x)\right)^m\right)\\
+\sum_{n \geq 0} 
\frac{t^{i+kn+j}}{(i+kn+j)!} GSEC^{i,j,k}_{i+kn+j,\Gamma,\mathscr R}(x).
\end{multline}
Taking the coefficient of $\frac{t^{ks+j}}{(ks+j)!}$ on both sides of 
(\ref{eqn:seprcluster33}) where $s \geq 0$, we see that we must show that
\begin{eqnarray}\label{eqn:seprcluster4}
&&\sum_{F \in \mathcal{MP}^{i,j,k}_{i+ks+j,\Gamma, \mathscr R}} x^{m_\Gamma(F)} \nonumber\\
&=& 
\left(\left(\left(\sum_{n \geq 0} 
\frac{t^{i+kn}}{(i+kn)!} GSC^{i,0,k}_{i+kn,\Gamma,\Gamma, \mathscr R}(x)\right)\left(\sum_{n \geq 0} 
\frac{t^{kn+j}}{(kn+j)!} GEC^{0,j,k}_{kn+j,\Gamma,\Gamma, \mathscr R}(x)\right)\right.\right.\nonumber\\
&&\left.\times \sum_{m\geq 0}\left(\sum_{n \geq 1} 
\frac{t^{kn}}{(kn)!} GC^{0,0,k}_{kn,\Gamma,\mathscr R}(x)\right)^m\right)\nonumber\\
&&\left.\left.+\sum_{n \geq 0} 
\frac{t^{i+kn+j}}{(i+kn+j)!} GSEC^{i,j,k}_{i+kn+j,\Gamma,\mathscr R}(x)
\right)\right|_{\frac{t^{i+ks+j}}{(i+ks+j)!}} 
\nonumber\\
&=& \left(\sum_{a + b +c=s, a,b,c \geq 0} 
\sum_{m =1}^b \sum_{\overset{b_1+ b_1 + \cdots +b_m=b}{b_i \geq 1}} 
\binom{i+ks+j}{i+ka,kb_1,\ldots,kb_m,kc+j} \times\right. \nonumber \\
&& \ \ \ \ \ \ \ \left. GSC^{i,0,k}_{i+ka,\Gamma, \mathscr R}(x)~ GEC^{0,j,k}_{kc+j,\Gamma, \mathscr R}(x)~
\prod_{j=1}^m GC^{0,0,k}_{kb_j,\Gamma, \mathscr R}(x)\right)+GSEC^{i,j,k}_{i+ks+j,\Gamma, \mathscr R}(x). \nonumber \\
&& ~
\end{eqnarray}
Now we interpret the right-hand side of (\ref{eqn:seprcluster4}). 
Our situation is a bit different from our previous proofs 
in that (\ref{eqn:seprcluster4}) allows two ways to construct an array in $\mathcal{MP}_{i+ks+j,\Gamma, \mathscr R}^{i,j,k}$.

One way is via $\mathcal{GSC}$, $\mathcal{GEC}$ and $\mathcal{GC}$. First we pick non-negative integers $a$, $b$ and $c$ such that 
$a+b+c=s$. Then we pick an $m$ such that $1 \leq m \leq b$. Next we 
pick $b_1, \ldots, b_m \geq 1$ such that $b_1+b_2+ \cdots +b_m = b$. 
Next the multinomial coefficient $\binom{i+ks+j}{i+ka,kb_1,\ldots,kb_m,kc+j}$ allows 
us to pick a sequence of 
sets $S_1, \ldots, S_m,S_{m+1},S_{m+2}$ which partition 
$\{1,2, \ldots, i+ks+j\}$ such that $|S_1|=i+ka$, $|S_{h+1}| =kb_h$ for $h=1, \ldots ,m$ and 
$|S_{m+2}| = kc+j$. 
The factor $GSC^{i,0,k}_{i+ka,\Gamma, \mathscr R}(x)$ allows 
us to pick a generalized $\Gamma, \mathscr R$-start-cluster $G_{1}$ of size $i+ka$ 
with weight $\omega_{\Gamma, \mathscr R}(G_{1})$. Note 
that in the cases where $a=0$, our definitions imply that  
$G_{1}$ is just a column of height $i$ filled with the numbers 
$1, \ldots, i$ 
which is increasing, reading from bottom to top. The factor $GEC^{0,j,k}_{kc+j,\Gamma, \mathscr R}(x)$ allows 
us to pick a generalized $\Gamma, \mathscr R$-end-cluster $G_{m+2}$ of size $kc+j$ 
with weight $\omega_{\Gamma,\mathscr R}(G_{m+2})$. Note 
that in the cases where $c=0$, our definitions imply that  
$G_{m+2}$ is just a column of height $j$ filled with the numbers 
$1, \ldots, j$ 
which is increasing, reading from bottom to top. Finally, the product $\prod_{j=1}^m GC^{0,0,k}_{kb_h,\Gamma, \mathscr R}(x)$ allows 
us to pick generalized $\Gamma, \mathscr R$-clusters $G_h \in \mathcal{GC}^{0,0,k}_{kb_h,\Gamma, \mathscr R}$ for 
$h=2,\ldots, m+1$ with weight $\prod_{h=1}^m \omega_{\Gamma,\mathscr R}(G_h)$. Note 
that in the cases where $b_h =1$, our definitions imply that  
$G_h$ is just a column of height $k$ filled with the numbers $1, \ldots, k$ 
which is increasing, reading from bottom to top. 

\fig{1.2}{SEPRCluster2}{Construction for the right-hand side of (\ref{eqn:seprcluster4}) via $\mathcal{GSC}$, $\mathcal{GEC}$ and $\mathcal{GC}$.}

Another way is via $\mathcal{GSEC}$. The term $GSEC^{i,j,k}_{i+ks+j,\Gamma, \mathscr R}(x)$ allows us to have a generalized $\Gamma, \mathscr R$-start-end-cluster $G_*$ of size $i+ks+j$. Note that in the case where $s=0$, our definitions implies that $G_*$ has two blocks and the first block is a single column of height $i$ and the second block is a single column of height $j$. 

\fig{1.2}{SEPRCluster22}{Construction for the right-hand side of 
(\ref{eqn:seprcluster4}) via $\mathcal{GSEC}$.}

For example, suppose that $i=3$, $k=2$ and $j=1$ and $\Gamma = \{P\}$ where 
$P = \begin{array}{|c|c|c|}
\hline 
6 & 5 & 4 \\
\hline
1 & 2 & 3 \\
\hline
\end{array}$.
Suppose that $\mathscr R$ is relation where for any two increasing 
columns $C$ and $D$ of integers, $(C,D) \in \mathscr R$ if and only if the top element of $C$ is greater than the bottom element of $D$. 
Then in Figure \ref{fig:SEPRCluster2}, we have pictured 
$S_1, S_2, S_3, S_4, S_5,S_6$ which partition $\{1, \ldots, 42\}$ and 
corresponding a generalized $\Gamma, \mathscr R$- start-cluster $G_1$, generalized $\Gamma, \mathscr R$-clusters 
$G_2, \ldots, G_5$ and a generalized $\Gamma, \mathscr R$-end-cluster $G_6$. This is a construction for the right-hand side of (\ref{eqn:seprcluster4}) via $\mathcal{GSC}$, $\mathcal{GEC}$ and $\mathcal{GC}$. 
For each $h$, 
we have indicated the separation between the blocks of $G_i$ by dark black 
lines.  Then for each $h =1, 
\ldots, m+1$, we let $E_i$ be the result of replacing 
each $j$ in $G_i$ by the $j^{th}$ smallest element of $S_i$.  If we 
concatenate $E_1 \ldots E_6$ together, then we will obtain 
an element of $Q \in \mathcal{MP}_{i+ks+j,\Gamma}^{i,j,k}$. The weight 
of $Q$ equals $\prod_{h=1}^6 \omega_{\Gamma,\mathscr R}(G_h)$ where 
\begin{eqnarray*}
	\omega_{\Gamma,\mathscr R}(G_1) &=& (-1)^{2-1} x, \\
	\omega_{\Gamma,\mathscr R}(G_2) &=& (-1)^{3-1}x, \\
	\omega_{\Gamma,\mathscr R}(G_3) &=& (-1)^{1-1}, \\
	\omega_{\Gamma,\mathscr R}(G_4) &=& (-1)^{1-1} x, \\ 
	\omega_{\Gamma,\mathscr R}(G_5) &=& (-1)^{2-1}x^2, \ \mbox{and} \\
	\omega_{\Gamma,\mathscr R}(G_6) &=& (-1)^{3-1}.   \\
\end{eqnarray*}
In Figure \ref{fig:SEPRCluster2}, we have indicated the boundaries between the $E_h$'s by 
light lines. 

In Figure \ref{fig:SEPRCluster22}, we have pictured 
$S_*=\{1,2, \ldots, 18\}$ and 
corresponding a generalized $\Gamma, \mathscr R$-start-end-cluster $G_*$. This is a construction for the right-hand side of (\ref{eqn:seprcluster4}) via $\mathcal{GSEC}$.  
Clearly, $G_* \in \mathcal{MP}_{i+ks+j,\Gamma}^{i,j,k}$. The weight 
of $G_*$ is 
\begin{eqnarray*}
\omega_{\Gamma,\mathscr R}(G_*)=(-1)^{6-1}x^2.
\end{eqnarray*}

We let $\mathcal{HGC}_{i+ks+j,\Gamma,\mathscr R}$ denote the set of all elements that can be constructed via $\mathcal{GSC}$, $\mathcal{GEC}$ and $\mathcal{GC}$, and let $\mathcal{HGSEC}_{i+ks+j,\Gamma,\mathscr R}$ denote the set of all elements that can be constructed via $\mathcal{GSEC}$. We let $$
\mathcal H_{i+ks+j,\Gamma,\mathscr R}:=\mathcal{HGC}_{i+ks+j,\Gamma,\mathscr R}~\cup~ \mathcal{HGSEC}_{i+ks+j,\Gamma,\mathscr R}.
$$

Then $Q =E_1 \ldots E_{m+1}E_{m+2}$ is an element of 
$\mathcal{HGC}_{i+ks+j,\Gamma,\mathscr R}$ if and only if $m\geq 0$, $\red(E_{1})$ is a generalized $\Gamma,\mathscr R$-start-cluster, for each $h =1, \ldots, m$, 
$\red(E_{h+1})$ is a generalized $\Gamma,\mathscr R$-cluster and $\red(E_{m+2})$ is a generalized $\Gamma,\mathscr R$-end-cluster. On the other hand, 
$Q =E_1$ is an element of 
$\mathcal{HGSEC}_{i+ks+j}, \Gamma, \mathscr R$ if and only if $\red(E_{1})$ is a generalized 
$\Gamma,\mathscr R$-start-end-cluster. 

Next we define a sign reversing involution $\theta:\mathcal{H}_{i+ks+j} \rightarrow \mathcal{HGEC}_{i+ks+j}$. We would like to use the same 
involutions as in our previous proofs. However, there is 
one potential problem.  That is, suppose that 
$Q = E_1 E_2$ where $E_1$ consists of a single column of height 
$i$, $E_2 =B_1 \ldots B_{m+1}$ is a generalized 
 $\Gamma, \mathscr R$-end-cluster, and $(E_1,\first(B_1)$ is 
not in $\mathscr R$.  Then if we followed our previous involutions, 
we would like to replace $E_1E_2$ by a single cluster $E$.  If we 
did not have generalized $\Gamma,\mathscr R$-start-end-clusters, 
then we could not define $\theta(E_1E_2) =E$.  This is the 
reason, we added a term corresponding to generalized 
$\Gamma,\mathscr R$-start-end-clusters. Formally, 
$\theta$ is defined as follows. Let $Q\in\mathcal{HGC}_{i+ks+j,\Gamma, \mathscr R}$.  \\
\ \\
{\bf Case 1.} $Q =B_1 B_2 \ldots B_m B_{m+1}$ is a generalized 
 $\Gamma, \mathscr R$-start-end-cluster, then we let 
$\theta(Q) = E_1 E_2$ where $E_1 =B_1$ is a column of height $i$ and 
$E_2 =B_2 \ldots B_mB_{m+1}$ is a generalized 
$\Gamma, \mathscr R$-end-cluster. Note by our definitions, 
$(E_1,\first(B_2))$ is not in $\mathscr R$. \\
\ \\
{\bf Case 2.} $Q =E_1E_2$ where $E_1 =B_1$ is a column of height $i$,  
$E_2 =B_2 \ldots B_mB_{m+1}$ is a generalized 
$\Gamma, \mathscr R$-end-cluster, and 
$(E_1,\first(B_2))$ is not in $\mathscr R$, 
then we let $\theta(E_1E_2) =E$ where 
$E= B_1 B_2 \ldots B_m B_{m+1}$ is a generalized 
 $\Gamma, \mathscr R$-start-end-cluster.\\
\ \\
Otherwise assume we are not in Case 1 or Case 2 
and that $Q =E_1 \ldots E_{m+1} E_{m+2}$ where 
$E_j =B^{(j)}_1 \ldots B^{(j)}_{k_j}$ for $j =1, \ldots, m+2$. Then 
we look for the first $h$ such that either \\
\ \\ 
{\bf Case 3} the block structure of  $\red(E_h) = B^{(h)}_1 B^{(h)}_2\ldots B^{(h)}_{k_h}$ consists of more than one block or \\
\ \\
{\bf Case 4} $E_h$ consists of a single block $B^{(h)}_1$ and $(\last(B^{(h)}_{1}),\first(B^{(h+1)}_1))$ is not in $\mathscr R$.

If $Q$ is in Case 3, then we let $\theta(Q)$ be the result  
of replacing $E_h$ by $E_h^* =B_1^{(h)}$ and 
$E^{**} = B_2^{(h)} \ldots B^{(h)}_{k_h}$ in $Q$.  
If $Q$ is in Case 4, we let 
$\theta(Q)$ be the result of replacing $E^{(h)}E^{(h+1)}$ by 
$E =B_1^{(h)}B_1^{(h+1)} \ldots B^{(h+1)}_{k_{h+1}}$ in $Q$. 

If neither Case (1), Case (2),  Case (3), or Case (4) applies, then we let $\theta(Q) = Q$. It is straightforward to check that $\theta$ is an involution 
and that if $\theta(Q) \neq Q$, then $w_{\Gamma,\mathscr R}(Q) = - 
w_{\Gamma,\mathscr R}(\theta(Q))$. The fixed points of $\theta$ 
are those $Q \in \mathcal{HGC}_{i+ks+j,\Gamma,\mathscr R}$ of 
the form $E_1 \ldots E_{m+1}$ where each $E_i$ consists of a single 
block and 
$E_1$ consists a single column of height $i$, $E_{m+1}$ consists of 
a single column of height $j$, and for each $2 \leq r \leq m$, 
$E_r$ is either a single column of height $k$ or $E_r$ reduces 
to a $\Gamma$ cluster such any two consecutive columns in 
$E_r$ satisfy $\mathscr R$. In addition, it must be 
the case that $(\last(E_i),\first(E_{i+1}) \in \mathscr R$ for 
$i =1, \ldots, m$.  As in our previous proofs, 
it follows that any two consecutive columns in $Q$ must be in 
$\mathscr R$ so that $Q \in 
\mathcal{MP}_{i+ks+j,\Gamma,\mathscr R}^{i,j,k}$.  In addition, 
$E_1, \ldots, E_{m+1}$ are just the maximal $\Gamma$-subclusters in $R$. 
Hence the theorem follows as in the previous proofs.  
\end{proof}

\subsection{Examples}

In this subsection, we will illustrate our generalized cluster method with 
two examples. First we shall compute the generating 
function for the number of matches of 162534 in down-up permutations 
and then compute the number of 124356 matches in 3-generalized Euler 
permutations. 

\subsubsection{Up-down pattern 162534 in down-up permutations}\label{sec:ud_in_du}

We say a permutation $\sigma=\sigma_1\sigma_2\cdots\sigma_n\in\mathcal S_n$ is an up-down permutation if $\sigma_1<\sigma_2>\sigma_3<\sigma_4>\sigma_5<\cdots$. More precisely, $\sigma\in\mathcal S_n$ is up-down if and only if 
$$
Des(\sigma)=\left\{2k:~ \forall \text{ integer }k, 0<k<\frac{n}{2} \right\}.
$$
We let $\mathcal{UD}_n$ denote the set of all the up-down permutations in $\mathcal S_n$. We say a permutation $\sigma=\sigma_1\sigma_2\cdots\sigma_n\in\mathcal S_n$ is an down-up permutation if $\sigma_1>\sigma_2<\sigma_3>\sigma_4<\sigma_5>\cdots$. Similarly, a permutation $\sigma\in\mathcal S_n$ is down-up if and only if 
$$
Des(\sigma)=\left\{2k+1:~ \forall \text{ integer }k, 0<k<\frac{n+1}{2} \right\}.
$$
We let $\mathcal{DU}_n$ denote the set of all the up-down permutations in $\mathcal S_n$.

We note that 
down-up permutations can be represented by arrays. Consider a binary relation $\mathscr R$ such that holds for a pair of columns $(C,D)$ of 
height $\leq 2$ if and only if the top element of column $C$ is greater than the bottom element of column $D$. Then 
$\{w(F): F \in \mathcal{P}_{2n+2,\mathscr R}^{1,1,2}\} = \mathcal{DU}_{2n+2}$ and $\{w(F): F \in \mathcal{P}_{2n+2,\mathscr R}^{1,0,2}\} = \mathcal{DU}_{2n+1}$.% For any $F\in\mathcal{P}_{i+kn+j}^{i,j,k}$, let $w(F)$
%be the sequence obtained by reading the columns from bottom to top and then fro%m left to right. Then it is easy to see that if $F$ is a filling in $\mathcal{P%}_{2n+1,\mathscr R}^{1,0,2}$ ($\mathcal{P}_{2n+2,\mathscr R}^{1,1,2}$), then $w%(F)$ is an element in $\mathcal{DU}_{2n+1}$ ($\mathcal{DU}_{2n+2}$). For exampl%e, an $F\in\mathcal{P}_{10,\mathscr R}^{1,1,2}$ and $w(F)$ is given in Figure \%ref{fig:downup_wF}.

%\fig{1.2}{downup_wF}{$F\in\mathcal{P}_{10,\mathscr R}^{1,1,2}$ and $w(F)\in\mathcal{DU}_{10}$.}

Let $\tau=1~6~2~5~3~4\in\mathcal{UD}_{6}$. Then the generating function 
$$
\sum_{n\geq 1}\frac{t^n}{n!}\sum_{\sigma\in\mathcal{DU}_{n}}x^{\tau\text{-mch}(\sigma)}= A_P(x,t) +B_P(x,t)
$$
where 
\begin{eqnarray}
&&A_{P}(x,t):=\sum_{n\geq 0}\frac{t^{2n+1}}{(2n+1)!}\sum_{F\in\mathcal{P}^{1,0,2}_{2n+1,\mathscr R}}x^{P\text{-mch}(F)}, \label{eqn:du_gf_odd}\\
&&B_{P}(x,t):=\sum_{n\geq 0}\frac{t^{2n+2}}{(2n+2)!}\sum_{F\in\mathcal{P}^{1,1,2}_{2n+2,\mathscr R}}x^{P\text{-mch}(F)},\label{eqn:du_gf_even}
\end{eqnarray}
and $P= \begin{array}{|c|c|c|}
\hline 
6 & 5 & 4 \\
\hline
1 & 2 & 3 \\
\hline
\end{array}\in\mathcal{P}_{0,0,2j}^{0,0,2}$. 
Note that (\ref{eqn:du_gf_odd}) and (\ref{eqn:du_gf_even}) are equivalent to generating functions for the distribution of $\tau$-matches  in down-up permutations of even length and odd length, respectively.

By Theorem \ref{thm:imainGamma},
\begin{equation}
A_{P}(x,t)=\frac{\sum_{n \geq 0} 
	\frac{t^{2n+1}}{(2n+1)!} GSC^{1,0,2}_{2n+1,P,\mathscr R}(x-1)}{1-\sum_{n \geq 1} 
	\frac{t^{2n}}{(2n)!} GC^{0,0,2}_{2n,P,\mathscr R}(x-1)},
\end{equation}
and, by Theorem \ref{thm:ijmainGamma},
\begin{multline}
B_{P}(x,t)=	\\
\frac{\left(\sum_{n \geq 0} 
		\frac{t^{2n+1}}{(2n+1)!} GSC^{1,0,2}_{2n+1,P,\mathscr R}(x-1)\right)\left(\sum_{n \geq 0} 
		\frac{t^{2n+1}}{(2n+1)!} GEC^{0,1,2}_{2n+1,P,\mathscr R}(x-1)\right)}{1-\sum_{n \geq 1} 
		\frac{t^{2n}}{(2n)!} GC^{0,0,2}_{2n,P,\mathscr R}(x-1)}\\
	+\sum_{n \geq 0} 
	\frac{t^{2n+2}}{(2n+2)!} GSEC^{1,1,2}_{2+2,P,\mathscr R}(x-1).
\end{multline}

Thus we need to compute $GC^{0,0,2}_{2n,P,\mathscr R}(x)$, 
$GSC^{0,0,2}_{2n,P,\mathscr R}(x)$, 
$GEC^{0,0,2}_{2n,P,\mathscr R}(x)$, and $GSEC^{0,0,2}_{2n,P,\mathscr R}(x)$.

We shall start by discussing structures of $P$-clusters. Note that 
$P$ is one of the patterns that we discussed in Section 2. In this 
case, the Hasse diagram of a $P$-clusters is of the form 
pictured in Figure \ref{fig:pcluster}.

\fig{1.40}{pcluster}{The Hasse diagram of a $P$-cluster.}

If we let $\mathcal{C}_{2n,P}$ denote the set of $P$-clusters consisting of $n$ columns, then we proved that 
\begin{eqnarray}
&&C_2(x) =1\\	
&&C_4(x) =0\\
&&C_6(x)=x\label{eqn:du_C6}\\
	&&C_8(x)=x^2\label{eqn:du_C8}\\
	&&C_{2n}(x)=x\left(C_{2n-4}(x)+C_{2n-2}(x)\right),~\text{ for }n\geq 5.\label{eqn:du_C2n}
\end{eqnarray}

In section 2, we also gave a recursion to compute  
$GC_{2n,P,\mathscr R}^{0,0,2}(x)$.  However, in this section, we 
shall give an alternative way to compute $GC_{2n,P,\mathscr R}^{0,0,2}(x)$.
Suppose $Q\in\mathcal{GC}_{2n,P,\mathscr R}^{0,0,2}$ has $m$ blocks, i.e., $Q=B_1B_2\ldots B_m$. For an array $F$, we let $\col(F)$ denote the number of columns in $\col(F)$. Each $B_i$ is either order isomorphic to a $P$-cluster or a single column.  We let $\mathcal{GC}_{\col = (b_1,b_2,\ldots,b_m)}$ to denote the set of generalized clusters such that $\col(B_i)=b_i$. Given $\col(B_i)$ for each $1\leq i\leq m$, the set of words of generalized clusters 
$\mathcal{GC}_{\col = (b_1,b_2,\ldots,b_m)}$ is equal to the set 
of linear extensions of the poset whose Hasse diagram 
corresponds to $(b_1, b_2, \ldots, b_m)$. That is, we are 
assuming that for any block $(\last(B_i),\first(B_{i+1})$ 
is not in $\mathscr R$ so that 
there must be an arrow directed from the top of the last column of $B_i$ to the 
bottom of the first column of $B_{i+1}$. We let $\Gamma(b_1,b_2,\ldots,b_m)$ denote the Hasse diagram corresponding to $\mathcal{GC}_{\col = (b_1,b_2,\ldots,b_m)}$. For example, $\Gamma (3,1,1,5,1)$ is pictured in Figure \ref{fig:downup_gc_ex}.

\fig{1.40}{downup_gc_ex}{$\Gamma (3,1,1,5,1)$.}

It is clear that
\begin{equation}\label{eqn:du_gc_compute}
	\sum_{Q\in\mathcal{GC}_{\col = (b_1,b_2,\ldots,b_m)}}\omega_{P,\mathscr R}(Q)=(-1)^{m-1}\text{LE}(\Gamma(b_1,b_2,\ldots,b_m))\prod_{i=1}^m C_{2b_i}(x),
\end{equation}
where $\text{LE}(\Gamma(b_1,b_2,\ldots,b_m))$ is the number of linear extensions of $\Gamma(b_1,b_2,\ldots,b_m)$ and by convention, we let $C_{2}(x)=1$. Therefore, to compute (\ref{eqn:du_gc_compute}), we only need to count linear extensions of $\Gamma(b_1,b_2,\ldots,b_m)$. Continue using $\Gamma (3,1,1,5,1)$ as example, the Hasse diagram in  Figure \ref{fig:downup_gc_ex} is actually a tree-like diagram, as drawn in Figure \ref{fig:downup_gc_ex_str}. Then 
we can easily compute that
$$
\text{LE}(\Gamma (3,1,1,5,1))=\binom{6}{4}\binom{18}{2}.
$$
That is, it is easy to see that the first four elements in the bottom 
row must be labeled with $1, \ldots, 4$, reading from left to right, since 
there is a directed path from 
these elements to any other elements in the poset. 
Once we remove these four  elements, the Hasse diagram becomes disconnected 
so that $\binom{18}{2}$ ways to pick the labels $a < b$ 
for the two elements above 
the element labeled with 4 and  only one way to assign $a$ and $b$ 
to those two elements. Once 
we have picked those two elements $a$ and $b$, the next 10 elements must be the smallest 
elements of $\{1,\ldots, 22\} -\{1,2,3,4,,a,b\}$. Once we remove 
those 10 elements, the Hasse diagram again becomes disconnected so that 
there are $\binom{6}{4}$ ways to pick the labels of the vertical segment and only one 
way to order them.

\fig{1.40}{downup_gc_ex_str}{$\Gamma (3,1,1,5,1)$.}

In general, it is not difficult to see that 
$$
\text{LE}(\Gamma (b_1,b_2,\ldots,b_m))=\prod_{i=1}^{m-1}\binom{b_{m-i}-1+2\sum\limits_{j=m-i+1}^nb_{i}}{b_{m-i}-1}
$$
and then hence
\begin{multline}
\sum_{Q\in\mathcal{GC}_{\col = (b_1,b_2,\ldots,b_m)}}\omega_{P,\mathscr R}(Q)=\\(-1)^{m-1}C_{2b_m}(x)\prod_{i=1}^{m-1}\binom{b_{m-i}-1+2\sum\limits_{j=m-i+1}^nb_{i}}{b_{m-i}-1} C_{2b_i}(x).
\end{multline}

Since a generalized $P$-cluster can not have a block of 2 columns,
\begin{multline*}
GC_{2n,P,\mathscr R}^{0,0,2}(x)=
\sum_{m=1}^n~\sum_{\overset{b_1+ \cdots +b_m=n}{b_i = 1\text{ or }b_i\geq 3}}~ \sum_{Q\in\mathcal{GC}_{\col = (b_1,\ldots,b_m)}}\omega_{P,\mathscr R}(Q)=\\
\sum_{m=1}^n~\sum_{\overset{b_1+ \cdots +b_m=n}{b_i = 1\text{ or }b_i\geq 3}}~ (-1)^{m-1}C_{2b_m}(x)\prod_{i=1}^{m-1}\binom{b_{m-i}-1+2\sum\limits_{j=m-i+1}^mb_{i}}{b_{m-i}-1} C_{2b_i}(x).
\end{multline*}

Using a computer program, we computed that 
\begin{eqnarray}\label{eqn:du_gc_list}
GC_{2,P,\mathscr R}^{0,0,2}(x)&=&1\nonumber\\
GC_{4,P,\mathscr R}^{0,0,2}(x)&=&-1\nonumber\\
GC_{6,P,\mathscr R}^{0,0,2}(x)&=&1+x\nonumber\\
GC_{8,P,\mathscr R}^{0,0,2}(x)&=&-1-7x+x^2\nonumber\\
GC_{10,P,\mathscr R}^{0,0,2}(x)&=&1+22x-10x^2+x^3\nonumber\\
GC_{12,P,\mathscr R}^{0,0,2}(x)&=&-1-50x+   2x^2 -14x^3+   x^4\nonumber\\
GC_{14,P,\mathscr R}^{0,0,2}(x)&=&1+ 95x  +299x^2  -86x^3  -19x^4    +x^5\nonumber\\
GC_{16,P,\mathscr R}^{0,0,2}(x)&=&-1-161x -1796x^2+  1705x^3  -377x^4   -25x^5 +  x^6\nonumber\\
&\cdots&
\end{eqnarray}

Next we compute $GSC_{2n+1,P,\mathscr R}^{1,0,2}(x)$. Suppose $Q\in\mathcal{GSC}_{2n+1,P,\mathscr R}^{1,0,2}$ has $m$ blocks, i.e., $Q=B_1B_2\ldots B_m$. $B_1$ has to be a block consisting of one element, and for $2\leq i\leq m$, each $B_i$ is either order isomorphic to a $P$-cluster or a single column.  We let $\mathcal{GSC}_{\col = (1,b_2,\ldots,b_m)}$ to denote the set of generalized start-clusters such that $\col(B_i)=b_i$. Given $\col(B_i)$ for each $2\leq i\leq m$, 
the set of words of generalized start-clusters 
$\mathcal{GSC}_{\col = (b_1,b_2,\ldots,b_m)}$ is equal to the set 
of linear extensions of the poset whose 
Hasse diagram corresponds is equal to $\Gamma_S(1, b_2, \ldots, b_m)$ 
where $\Gamma_S(1, b_2, \ldots, b_m)$ is the Hasse diagram 
obtained from $\Gamma(b_2, \ldots, b_m)$ by adding 
an arrow from the element in $B_1$ to the left-most element in 
the bottom row of $\Gamma(b_2, \ldots, b_m)$.
 For example, $\Gamma_S (1,3,1,1,5,1)$ is pictured in Figure \ref{fig:downup_gsc_ex}.

\fig{1.40}{downup_gsc_ex}{$\Gamma_S (1,3,1,1,5,1)$.}

It is clear that
\begin{equation}\label{eqn:du_gsc_compute}
\sum_{Q\in\mathcal{GSC}_{\col = (1,b_2,\ldots,b_m)}}\omega_{P,\mathscr R}(Q)=(-1)^{m-1}\text{LE}(\Gamma_S(1,b_2,\ldots,b_m))\prod_{i=1}^m C_{2b_i}(x).
\end{equation}
By comparing Figures \ref{fig:downup_gc_ex} and \ref{fig:downup_gsc_ex}, 
it is easy to see that 
$$
\text{LE}(\Gamma_S(1,b_2,\ldots,b_m))=\text{LE}(\Gamma(b_2,\ldots,b_m)).
$$
Therefore, for $n\geq 1$, 
\begin{eqnarray*}
GSC_{2n+1,P,\mathscr R}^{1,0,2}(x)&=&
\sum_{m=1}^{n+1}~\sum_{\overset{1+b_2+ \cdots +b_m=n+1}{b_i = 1\text{ or }b_i\geq 3}}~ \sum_{Q\in\mathcal{GSC}_{\col = (1,b_2,\ldots,b_m)}}\omega_{P,\mathscr R}(Q)\\
&=&-GC_{2n,P,\mathscr R}^{0,0,2}(x),
\end{eqnarray*}
and $GSC_{1,P,\mathscr R}^{1,0,2}(x)=1$. Then by (\ref{eqn:du_gc_list}), we have
\begin{eqnarray}\label{eqn:du_gsc_list}
GSC_{1,P,\mathscr R}^{1,0,2}(x)&=&1\nonumber\\
GSC_{3,P,\mathscr R}^{1,0,2}(x)&=&-1\nonumber\\
GSC_{5,P,\mathscr R}^{1,0,2}(x)&=&1\nonumber\\
GSC_{7,P,\mathscr R}^{1,0,2}(x)&=&-1-x\nonumber\\
GSC_{9,P,\mathscr R}^{1,0,2}(x)&=&1+7x-x^2\nonumber\\
GSC_{11,P,\mathscr R}^{1,0,2}(x)&=&-1-22x+10x^2-x^3\nonumber\\
GSC_{13,P,\mathscr R}^{1,0,2}(x)&=&1+50x-   2x^2+14x^3-x^4\nonumber\\
GSC_{15,P,\mathscr R}^{1,0,2}(x)&=&-1-95x -299x^2 +86x^3+19x^4-x^5\nonumber\\
GSC_{17,P,\mathscr R}^{1,0,2}(x)&=&1+161x +1796x^2-1705x^3+377x^4+25x^5-x^6\nonumber\\
&\cdots&
\end{eqnarray}

Using these initial values of $GC_{2n,P,\mathscr R}^{0,0,2}(x-1)$ and $GSC_{2n+1,P,\mathscr R}^{1,0,2}(x-1)$ in (\ref{eqn:du_gf_odd}), we computed 
the following initial terms of $A_P(x,t)$: 
\begin{eqnarray*}
A_P(x,t)&=&t+\frac{2}{3!}t^3+\frac{16}{5!}t^5+\frac{266 + 6 x}{7!} t^7+\frac{7623 + 303 x + 9 x^2}{9!} t^9\\
&&+\frac{333475 + 19557 x + 695 x^2 + 10 x^3}{11!}t^{11}+\cdots.
\end{eqnarray*}

Next we compute $GEC_{2n+1,P,\mathscr R}^{0,1,2}(x)$. Suppose $Q\in\mathcal{GEC}_{2n+1,P,\mathscr R}^{0,1,2}$ has $m$ blocks, i.e., $Q=B_1B_2\ldots B_m$. $B_m$ has to be a block consisting of one element, and for $1\leq i\leq m-1$, each $B_i$ is either order isomorphic to a $P$-cluster or a single column.  We let $\mathcal{GEC}_{\col = (b_1,\ldots,b_{m-1},1)}$ to denote the set of generalized end-clusters such that $\col(B_i)=b_i$. Given $\col(B_i)$ for each $1\leq i\leq m-1$, we can represent the set of words of such  generalized end-clusters uniquely by as the set of linear extensions of the poset whose Hasse diagram is 
$\Gamma_E(b_1,\ldots,b_{m-1},1)$ where 
$\Gamma_E(b_1,\ldots,b_{m-1},1)$ is the Hasse diagram obtained 
from $\Gamma(b_1,\ldots,b_{m-1})$ by adding an arrow from the right-most 
element in the top row of $\Gamma(b_1,\ldots,b_{m-1})$ to the element 
in $B_m$. For example, $\Gamma_E (3,1,1,5,1,1)$ is pictured in Figure \ref{fig:downup_gec_ex}.

\fig{1.40}{downup_gec_ex}{ $\Gamma_E (3,1,1,5,1,1)$.}

In this case, one can show that 

$$
\text{LE}(\Gamma (b_1,b_2,\ldots,1))=\prod_{i=1}^{m-1}\binom{b_{m-i}+2\sum_{j=m-i+1}^{m-1}b_{i}}{b_{m-i}-1}.
$$
Hence 
\begin{multline*}
GEC_{2n+1,P,\mathscr R}^{0,1,2}(x)=
\sum_{m=1}^n~\sum_{\overset{2b_1+ \cdots +2b_{m-1}+1=2n+1}{b_i = 1\text{ or }b_i\geq 3}}~ \sum_{Q\in\mathcal{GEC}_{\col = (b_1,\ldots,b_{m-1},1)}}\omega_{P,\mathscr R}(Q)=\\
\sum_{m=1}^n~\sum_{\overset{2b_1+ \cdots +2b_{m-1}+1=2n+1}{b_i = 1\text{ or }b_i\geq 3}}~ (-1)^{m-1}C_{2b_m}(x)\prod_{i=1}^{m-1}\binom{b_{m-i}+2\sum\limits_{j=m-i+1}^{m-1}b_{i}}{b_{m-i}-1} C_{2b_i}(x).
\end{multline*}
Using a computer program, we computed that 
\begin{eqnarray}\label{eqn:du_gec_list}
GEC_{1,P,\mathscr R}^{0,1,2}(x)&=&1\nonumber\\
GEC_{3,P,\mathscr R}^{0,1,2}(x)&=&-1\nonumber\\
GEC_{5,P,\mathscr R}^{0,1,2}(x)&=&1\nonumber\\
GEC_{7,P,\mathscr R}^{0,1,2}(x)&=&-1-3x\nonumber\\
GEC_{9,P,\mathscr R}^{0,1,2}(x)&=&1+13x-4x^2\nonumber\\
GEC_{11,P,\mathscr R}^{0,1,2}(x)&=&-1-34x+19x^2-5x^3\nonumber\\
GEC_{13,P,\mathscr R}^{0,1,2}(x)&=&1+70x68x^2+28x^3-6x^4\nonumber\\
GEC_{15,P,\mathscr R}^{0,1,2}(x)&=&
-1 -125x -789x^2+531x^3 +  41x^4-7x^5 \nonumber\\
GEC_{17,P,\mathscr R}^{0,1,2}(x)&=& 1+203x+3551x^2-3973x^3+  2195x^4 +  59x^5    -8x^6\nonumber\\
&\cdots&
\end{eqnarray}

Finally we shall compute  $GSEC_{2n+2,P,\mathscr R}^{1,1,2}(x)$. Suppose $Q\in\mathcal{GSEC}_{2n+2,P,\mathscr R}^{1,1,2}$ has $m$ blocks, i.e., $Q=B_1B_2\ldots B_m$. Because of the definition of generalized start-end clusters, $m\geq 2$. $B_1$ as well as $B_m$ has to be a block consisting of one element, and for $2\leq i\leq m-1$, each $B_i$ is either order isomorphic to a $P$-cluster or a single column.  We let $\mathcal{GEC}_{\col = (1,b_2,\ldots,b_{m-1},1)}$ to denote the set of generalized start-end-clusters such that $\col(B_i)=b_i$. Given $\col(B_i)$ for each $2\leq i\leq m-1$, we can represent the set of words 
of such generalized start-end-clusters as the linear extensions of 
the poset whose Hasse diagram $\Gamma_{SE}(1,b_2,\ldots,b_{m-1},1)$ is obtained from the Hasse diagram $\Gamma(b_2,\ldots,b_{m-1})$ by adding an 
arrow connecting the point corresponding to $B_1$ to the left-most 
element in the bottom row of $\Gamma(b_2,\ldots,b_{m-1})$ and 
add an arrow from the right-most element in the top row of $\Gamma(b_2,\ldots,b_{m-1})$ to a point corresponding to $B_m$. 
For example, $\Gamma_{SE} (1,3,1,1,5,1,1)$ is pictured in Figure \ref{fig:downup_gsec_ex}.

\fig{1.40}{downup_gsec_ex}{ $\Gamma_{SE} (1,3,1,1,5,1,1)$.}

Comparing Figure \ref{fig:downup_gec_ex} and \ref{fig:downup_gsec_ex}, it is easy to see that
$$
\text{LE}(\Gamma_{SE}(1,b_2,\ldots,b_{m-1},1))=\text{LE}(\Gamma_E(b_2,\ldots,b_{m-1},1)),
$$
and then hence
$$
GSEC_{2n+2,P,\mathscr R}^{1,1,2}(x)=-GEC_{2n+1,P,\mathscr R}^{0,1,2}(x).
$$
Based on (\ref{eqn:du_gec_list}),
\begin{eqnarray}\label{eqn:du_gsec_list}
GSEC_{2,P,\mathscr R}^{1,1,2}(x)&=&-1\nonumber\\
GSEC_{4,P,\mathscr R}^{1,1,2}(x)&=&1\nonumber\\
GSEC_{6,P,\mathscr R}^{1,1,2}(x)&=&-1\nonumber\\
GSEC_{8,P,\mathscr R}^{1,1,2}(x)&=&1+3x\nonumber\\
GSEC_{10,P,\mathscr R}^{1,1,2}(x)&=& -1-13x+4x^2\nonumber\\
GSEC_{12,P,\mathscr R}^{1,1,2}(x)&=& 1+34x-19x^2+5x^3\nonumber\\
GSEC_{14,P,\mathscr R}^{1,1,2}(x)&=& -1-70x-68x^2-28x^3+6x^4\nonumber\\
GSEC_{16,P,\mathscr R}^{1,1,2}(x)&=&1 +125x +789x^2-531x^3 -  41x^4+7x^5 \nonumber\\
GSEC_{18,P,\mathscr R}^{1,1,2}(x)&=& -1-203x-3551x^2+3973x^3-  2195x^4 -  59x^5    +8x^6\nonumber\\
&\cdots&
\end{eqnarray}

Then substituting the initial values of $GC_{2n,P,\mathscr R}^{0,0,2}(x-1)$, $GSC_{2n+1,P,\mathscr R}^{1,0,2}(x-1)$, $GEC_{2n+1,P,\mathscr R}^{0,1,2}(x-1)$ and $GSEC_{2n+2,P,\mathscr R}^{1,1,2}(x-1)$ into (\ref{eqn:du_gf_odd}) by expressions obtained in (\ref{eqn:du_gc_list}), (\ref{eqn:du_gsc_list}),  (\ref{eqn:du_gec_list}) and  (\ref{eqn:du_gsec_list}), we have computed that 
\begin{eqnarray*}
	B_P(x,t)&=&\frac{1}{2!}t^2+\frac{5}{4!}t^4+\frac{61}{6!} t^6+\frac{1358 + 27 x}{8!} t^8+\frac{48806 + 1611 x + 86 x^2}{10!}t^{10}\\
	&&+\frac{2561283 + 133803 x + 6734 x^2 + 65 x^3}{12!}t^{12}+\cdots.
\end{eqnarray*}

Adding $A_P(x,t)$ and $B_P(x,t)$, we obtain the following 
initial terms of the generating 
function for the distribution of the number of $123654$-matches 
 in down-up permutations. The initial terms of this generating functions are 
\begin{eqnarray*}
	&& t+\frac{1}{2!}t^2+\frac{2}{3!}t^3+\frac{5}{4!}t^4+\frac{16}{5!}t^5+\frac{61}{6!} t^6+\frac{266 + 6 x}{7!} t^7+ \frac{1358+27 x}{8!} t^8\\
	&&+\frac{7623 + 303 x + 9 x^2}{9!} t^9+\frac{48806 + 1611 x + 86 x^2}{10!}t^{10}+\cdots .
\end{eqnarray*} 

\subsubsection{Pattern 124356 in 3-generalized Euler permutations}

In this section, we shall study the generating 
function of $\tau$-matches where $\tau = 124356$ in generalized 
Euler permutations $\mathcal{E}_{3n+j}^{0,j,3}$ for 
$j = 0, 1, 2$. We have pictured $\tau$ as a $3 \times 2$ array 
$\Gamma_\tau$ at 
the top-left of Figure \ref{fig:P124356} and its corresponding Hasse diagram 
at the top-right of Figure \ref{fig:P124356}.  We picked $\tau$ because 
it has two key properties. The first key property is that 
there is unique $\Gamma_\tau$-cluster for 
in $\mathcal{CM}_{3n,\Gamma_\tau}^{0,0,3}$ for every $n \geq 1$. 
That is, a $\Gamma_\tau$ cluster with $n$ columns must be a linear 
extension of the Hasse diagram which is obtained from the Hasse diagram $D$ 
of $\Gamma_\tau$ by adding the arrows of the Hasse diagram for 
$\Gamma_\tau$ in any pair of consecutive columns. 
This process is pictured at the bottom left of Figure \ref{fig:P124356}
in the case where $n =6$. 
We claim that vertical arrows in each column other than the first 
and last column are redundant. That is, for each vertical arrow 
in columns $2, \ldots , n=1$, there is another directed path to its end points  that does not use vertical arrows. If we eliminate such vertical 
arrows as pictured at the bottom-right of Figure \ref{fig:P124356}, it then 
easy to see that remaining arrows linear order the points with 
the smallest element at the left-most cell in the bottom row 
of the diagram and the largest element in the top row of the diagram. Moreover, in the cluster, we must mark column except the last one with an $x$. 
It follows that $C_{3n,\Gamma_\tau}^{0,0,3}(x) = x^{n-1}$ for 
all $n \geq 2$. 

This fact is actually a special case of a more general theorem 
proved by Harmse and Remmel \cite{HR}. That is, Harmse and Remmel 
completely classified those elements 
$P \in \mathcal{P}_{2k}^{0,0,k}$ such that there is 
a unique $P$-cluster with $n$ columns for every $n$ greater than 
or equal to 2 as those $P$ such that in every pair of consecutive 
rows $j$ and $j+1$, either left column contains a pair of consecutive elements 
or the right column contains a pair of consecutive elements. For example, 
in $\Gamma_\tau$, 1 and 2 are consecutive elements that appear in 
the left column of rows 1 and 2 and 5 and 6 are consecutive elements 
that appear in the left column of rows 2 and 3.

\fig{1.20}{P124356}{$\Gamma_\tau$ and $\Gamma_\tau$-clusters.}

Let $\mathscr R$ be the relation that holds between any pair of increasing 
columns of integers $(C,D)$ if and only if the top element of 
$C$ is greater than the bottom element of $D$. Then for all $n \geq 0$, 
\begin{eqnarray*}
\mathcal{E}_{3n}^{0,0,3} &=&  \mathcal{P}_{3n, \mathscr R}^{0,0,3}, \\
\mathcal{E}_{3n+1}^{0,0,3} &=&  \mathcal{P}_{3n+1, \mathscr R}^{0,1,3}, \ \mbox{and}\\
\mathcal{E}_{3n+2}^{0,0,3} &=&  \mathcal{P}_{3n+2, \mathscr R}^{0,2,3}.
\end{eqnarray*}

It follows from Theorems \ref{thm:mainGamma} and \ref{thm:jmainGamma} that 
\begin{eqnarray*}
&&1+ \sum_{j=0}^2 \sum_{n \geq 1} \frac{t^{3n+j}}{(3n+j)!} 
\sum_{\sg \in \mathcal{E}^{0,j,3}_{3n+j}} x^{\tmch(\sg)} =  \\
&&\frac{1+\sum_{n\geq 0}\left(\frac{t^{3n+1}}{(3n+1)!}GEC^{0,0,3}_{3n+1,\Gamma_\tau,\mathscr R}(x-1)+\frac{t^{3n+2}}{(3n+2)!}GEC^{0,0,3}_{3n+2,\Gamma_\tau,\mathscr R}(x-1)\right)}{1-\sum\limits_{n\geq 1}\frac{t^{3n}}{(3n)!}GC^{0,0,3}_{3n,\Gamma_\tau,\mathscr R}(x-1)}.
\end{eqnarray*}

The second key property of $\Gamma_\tau$ is that in generalized 
clusters and generalized end-clusters $(B_1, \ldots, B_m)$, 
the order of the elements is completely determined. That is, 
the arrows between consecutive blocks $B_i,B_{i+1}$ are directed 
from the largest element in block $B_i$ to the smallest 
element in block $B_{i+1}$. This is illustrated in Figure 
\ref{fig:P124356A} where we have pictured 
a generalized $\Gamma_\tau$-cluster at the top and a 
generalized $\Gamma_\tau$-end-cluster at the bottom.

\fig{1.20}{P124356A}{The structure of generalized 
$\Gamma_\tau, \mathscr R$-clusters.}

Next we shall compute $GC^{0,0,3}_{3n,\Gamma_\tau,\mathscr R}(x)$, 
$GEC^{0,0,3}_{3n+1,\Gamma_\tau,\mathscr R}(x)$, and 
$GEC^{0,0,3}_{3n+2,\Gamma_\tau,\mathscr R}(x)$. We will start 
with computing $GC^{0,0,3}_{3n,\Gamma_\tau,\mathscr R}(x)$. 
It is easy check 
that  $GC^{0,0,3}_{3,\Gamma_\tau,\mathscr R}(x) =1$ and 
$GC^{0,0,3}_{6,\Gamma_\tau,\mathscr R}(x) = x-1$. We claim 
that for $n \geq 2$, 
$$GC^{0,0,3}_{3n,\Gamma_\tau,\mathscr R}(x) = (x-1)GC^{0,0,3}_{3n,\Gamma_\tau,\mathscr R}(x).
$$
That is, if $C = (B_1, \ldots, B_m)$ is a generalized 
$\Gamma_\tau,\mathscr R$-cluster, then either 
$B_1$ consists of a single column in which case 
$$w_{\Gamma_\tau,\mathscr R}(C) = 
- w_{\Gamma_\tau,\mathscr R}((B_2, \ldots, B_m)), $$ 
or $B_1$ has $k \geq 2$ columns and weight $x^{k-1}$ in which 
case we can replace $B_1$ by the unique cluster 
$B_1^\prime$ with $k-1$ columns and  
$$w_{\Gamma_\tau,\mathscr R}(C) = 
x w_{\Gamma_\tau,\mathscr R}((B_1^\prime, B_2, \ldots, B_m)). $$

We let
$$
GC(x,t):=\sum_{n\geq 1}\frac{t^{3n}}{(3n)!}
	GC^{0,0,3}_{3n,\Gamma_\tau,\mathscr R}(x).
$$
Based on the recursion above, we have
$$
\frac{\partial^3 GC(x,t)}{\partial t^3}-1=(x-1)~
GC(x,t),
$$
with boundary conditions
$$
GC(0,t)=0,~~\left.\frac{\partial GC_(x,t)}{\partial t}\right|_{t=0}=0,~~\left.\frac{\partial^2 GC(x,t)}{\partial t^2}\right|_{t=0}=0.
$$
We solved this differential equation using Mathematica and obtained 
that 
\begin{equation*}
	GC(x,t)=-\frac{e^{t \sqrt[3]{x-1}}+e^{-\frac{1}{2} i \left(\sqrt{3}-i\right) t \sqrt[3]{x-1}}+e^{\frac{1}{2} i \left(\sqrt{3}+i\right) t \sqrt[3]{x-1}}-3}{3-3 x}.
\end{equation*}

Next we compute $GEC^{0,0,3}_{3n+1,\Gamma_\tau,\mathscr R}(x)$. 
One can check that $GEC^{0,0,3}_{1,\Gamma_\tau,\mathscr R}(x)=1$ and 
$GEC^{0,0,3}_{4,\Gamma_\tau,\mathscr R}(x)=-1$. Then we can 
argue as we did in the computation of 
$GEC^{0,0,3}_{3n+1,\Gamma_\tau,\mathscr R}(x)$ that for $n \geq 2$, 
$$ GEC^{0,0,3}_{3n+1,\Gamma_\tau,\mathscr R}(x)=(x-1)GEC^{0,0,3}_{3n-2,\Gamma_\tau,\mathscr R}(x).
$$
We let
$$
GE_1(x,t):=\sum_{n\geq 1}\frac{t^{3n+1}}{(3n+1)!}
GEC^{0,0,3}_{3n+1,\Gamma_\tau,\mathscr R}(x)
$$
Based on the recursion above, we have
$$
\frac{\partial^3 GE_1(x,t)}{\partial t^3}+t=(x-1)~GE_1(x,t),
$$
with boundary conditions
$$
GE_1(x,0)=0,~~\left.\frac{\partial GE_1(x,t)}{\partial t}\right|_{t=0}=0,~~\left.\frac{\partial^2 GE_1(x,t)}{\partial t^2}\right|_{t=0}=0.
$$
Again, we used Mathematica to compute that 
\begin{equation*}
	GE_1(x,t)=\frac{e^{-\sqrt[3]{-1} t z} \left(3 t z e^{\sqrt[3]{-1} t z}+\left(1+(-1)^{2/3}\right) e^{i \sqrt{3} t z}-e^{\left(1+\sqrt[3]{-1}\right) t z}-(-1)^{2/3}\right)}{3 z^4},
\end{equation*}
where $z=\sqrt[3]{x-1}$.

The computation of $GEC^{0,0,3}_{3n+2,\Gamma_\tau,\mathscr R}(x)$ 
is similar to the computation 
of $GEC^{0,0,3}_{3n+1,\Gamma_\tau,\mathscr R}(x)$.
That is, we have  
\begin{eqnarray*}
	GEC^{0,0,3}_{2,\Gamma_\tau,\mathscr R}(x)&=&1, \\
GEC^{0,0,3}_{5,\Gamma_\tau,\mathscr R}(x)&=&-1, \ \mbox{and}\\
	GEC^{0,0,3}_{3n+2,\Gamma_\tau,\mathscr R}(x)&=&(x-1)
GEC^{0,0,3}_{3n-1,\Gamma_\tau,\mathscr R}(x) \ \mbox{for} \ n \geq 2.
\end{eqnarray*}
We let
$$
GE_2(x,t):=\sum\limits_{n\geq 1}\frac{t^{3n+2}}{(3n+2)!}
GEC^{0,0,3}_{3n+2,\Gamma_\tau,\mathscr R}(x).
$$
Based on the recursion above, we have the following differential equation
$$
\frac{\partial^3 GE_2(x,t)}{\partial t^3}+\frac{t^2}{2!}=(x-1)~GE_2(x,t),
$$
with boundary conditions
$$
GE_2(x,0)=0,~~\left.\frac{\partial GE_2(x,t)}{\partial t}\right|_{t=0}=0,~~\left.\frac{\partial^2 GE_2(x,t)}{\partial t^2}\right|_{t=0}=0.
$$
Then we used Mathematica to compute that 
$$
GE_2(x,t)=\frac{e^{-\sqrt[3]{-1} t z} \left(3 t^2 (x-1)^{2/3} e^{\sqrt[3]{-1} t z}-2 (-1)^{2/3} e^{i \sqrt{3} t z}-2 e^{\left(1+\sqrt[3]{-1}\right) t z}+2 (-1)^{2/3}+2\right)}{6 z^5},
$$
where $z=\sqrt[3]{x-1}$.

Finally we have
\begin{multline*}
	1+ \sum{j=0}^2 \sum_{n \geq 1} \frac{t^{3n+j}}{(3n+j)!} 
\sum_{\sg \in \mathcal{E}^{0,j,3}_{3n+j}} x^{\tmch(\sg)} = 
	\frac{1+\left(GE_1(x-1,t)+t\right)+\left(GE_2(x-1,t)+\frac{t^2}{2}\right)}{1-GC(x-1,t)}\\
	=
	\frac{-e^{-\sqrt[3]{-1} t \sqrt[3]{x-2}}}{2 (x-2)^{2/3} \left(e^{t \sqrt[3]{x-2}}+e^{-\frac{1}{2} i \left(\sqrt{3}-i\right) t \sqrt[3]{x-2}}+e^{\frac{1}{2} i \left(\sqrt{3}+i\right) t \sqrt[3]{x-2}}-3 x+3\right)}\\
	\times \left( 3 (x-2)^{2/3} e^{\sqrt[3]{-1} t \sqrt[3]{x-2}} (t (t+2) (x-1)+2 (x-2))-2 \left(\sqrt[3]{x-2}+1\right) e^{\left(1+\sqrt[3]{-1}\right) t \sqrt[3]{x-2}}\right.\\
	\left.+2 \left((-1)^{2/3} \sqrt[3]{x-2}+\sqrt[3]{x-2}-(-1)^{2/3}\right) e^{i \sqrt{3} t \sqrt[3]{x-2}}-2 (-1)^{2/3} \sqrt[3]{x-2}+2 (-1)^{2/3}+2\right).
\end{multline*}

A few initial terms of expansion of 
$1+ \sum{j=0}^2 \sum_{n \geq 1} \frac{t^{3n+j}}{(3n+j)!} 
\sum_{\sg \in \mathcal{E}^{0,j,3}_{3n+j}} x^{\tmch(\sg)}$ are
\begin{eqnarray*}
	&&1 + t + \frac{t^2}{2!} + \frac{t^3}{3!}+3\frac{t^4}{4!}+9\frac{t^5}{5!}+(18 + x)\frac{t^6}{6!}+ (93 + 6 x)\frac{t^7}{7!}+ (450 + 27 x)\frac{t^8}{8!}\\
	&&+( 1348 + 
	164 x + x^2)\frac{t^9}{9!}+(9936 + 1314 x + 9 x^2)\frac{t^{10}}{10!}+(66150 + 8397 x + 54 x^2)\frac{t^{11}}{11!}\\
	&&+\cdots.
\end{eqnarray*}

The key properties that made this example work is that 
$\Gamma_\tau$ had unique $\Gamma_\tau$-clusters, i.e. Harmse 
and Remmel's characterization applies,  and that the 
bottom-left element of $\Gamma_\tau$ is equal to 1 and the 
top-right element of $\Gamma_\tau$ is the largest element. 
There are many such examples $k$-Euler permutations where 
$k > 3$.  For example, when $k=4$, $\alpha = 12354678$ is such a permutation. 
Thus we can carry out a similar analysis for the distribution 
of $\alpha$-matches in 4-generalized Euler permutations.

\section{Joint Clusters and Generalized Joint Clusters}\label{chap:joint}

In this section, we extend the notions $\Gamma$-clusters and generalized 
$\Gamma$-clusters for a single set of patterns 
$\Gamma$ to joint clusters and generalized joint clusters for a sequence 
of sets of patterns $\Gamma_1, \ldots, \Gamma_s$ such that  
$\Gamma_i \subseteq \mathcal{P}^{0,0,k}_{r_ik}$ where $r_i \geq 2$ 
for $i =1, \ldots, s$. Our goal here is to 
compute generating functions of the form 
$$G^{i,j,k}_{\Gamma_1, \ldots, \Gamma_s}(x_1, 
\ldots x_m,t) = 
1+\sum_{n\geq1}\frac{t^{kn}}{(kn)!}\sum_{F\in\mathcal{P}^{i,j,k}_{kn}}\prod_{i=1}^s x_i^{\Gamma_i\text{-mch}(F)}.
$$
Similarly, if we are given some binary relation $\mathscr R$ on a pairs of columns, then we want to the compute multivariate generating function 
$$ G^{i,j,k}_{\Gamma_1, \ldots, \Gamma_s,\mathscr R}(x_1, 
\ldots x_m,t) =
1+\sum_{n\geq1}\frac{t^{kn}}{(kn)!}\sum_{F\in\mathcal{P}^{0,0,k}_{kn,\mathscr R}}\prod_{i=1}^s x_i^{\Gamma_i\text{-mch}(F)}.
$$
For each  the theorems in the previous sections, we will state 
multivariate analogues of that theorem.  In each case, the proofs 
of the multivariate analogues are essentially the same of the corresponding 
proofs in the previous sections so that we shall not supply the details.

\subsection{Joint clusters}

Let $\Gamma_i \subseteq \mathcal{P}^{0,0,k}_{r_ik}$ where $r_i \geq 2$ 
for $i = 1, \ldots, s$. 
For any $i,j \geq 0$, $k \geq 2$, and $n \geq 1$, we let 
$\mathcal{MP}^{i,j,k}_{kn,\Gamma_1, \ldots, \Gamma_s}$ 
denote the set of all fillings $F \in \mathcal{P}^{i,j,k}_{i+nk+j}$ 
where we have 
marked some of the $\Gamma_i$-matches in $F$ by placing an 
$x_i$ on top of the column that starts a $\Gamma_i$-match in $F$ for 
$i = 1, \ldots, s$. We let $m_{\Gamma_i}(F)$ equal the 
number of columns in $F$ which are marked with $x_i$. 
Note that our definitions allow a given column to be marked 
with some subset of $\{x_1, \ldots, x_s\}$. 
For example, suppose that $\Gamma_1 = \{P\}$ and $\Gamma_2 = \{Q\}$ where 
$Q=\begin{array}{|c|c|}
 \hline 3 & 4 \\
 \hline 1 & 2 \\
 \hline
 \end{array}$  and $P=\begin{array}{|c|c|c|}
 \hline 4&5&6 \\
 \hline 1 & 2 &3\\
 \hline
 \end{array}$.
Then Figure \ref{fig:Gamma12} pictures an element of  
$\mathcal{MP}^{0,0,k}_{12,\Gamma_1,\Gamma_2}$ where there are 
$\Gamma_1$-matches starting at columns 2 and 3 and $\Gamma_2$-matches 
starting at columns 2, 3, and 4. Thus we can have columns which have more than one label since a columns 2 and 3  starts a $\Gamma_1$-match and 
a $\Gamma_2$-match.  For the $F$ pictured in 
Figure \ref{fig:Gamma12}, $m_{\Gamma_1}(F) =2 = m_{\Gamma_2}(F)$. 

\fig{1.20}{Gamma12}{An element of $\mathcal{MP}^{0,0,k}_{kn,\Gamma_1,\Gamma_2}$.}

A {\bf joint $\Gamma_1, \ldots, \Gamma_s$-cluster} is a filling  of $F \in \mathcal{MP}^{0,0,k}_{kn,\Gamma_1, \ldots, \Gamma_s}$ such that 
\begin{enumerate}
 \item every column of $F$ is contained in a marked 
$\Gamma_i$-match of $F$ for some $i=1, \ldots, s$ and  
\item for all $1\leq a <b \leq n$, if columns $a$ and $b$ are marked and 
there is no column $c$ such that $a < c < b$  and column 
$c$ is marked,  
then there is an $i$ such that there is a $\Gamma_i$-match 
starting at column $a$ which is marked with $x_i$ and that $\Gamma_i$-match 
contains column $b$.   
\end{enumerate}
We let $\mathcal{CM}^{0,0,k}_{kn,\Gamma_1, \ldots, \Gamma_s}$ denote the set of all joint $\Gamma_1, \ldots, \Gamma_s$-clusters 
in $\mathcal{MP}^{0,0,k}_{kn,\Gamma_1, \ldots, \Gamma_s}$. 
For each $n \geq 2$, we define the cluster polynomial 
$$C^{0,0,k}_{kn,\Gamma_1, \ldots, \Gamma_s}(x_1, \ldots, x_s) = 
\sum_{F \in \mathcal{CM}^{0,0,k}_{kn,\Gamma_1, \ldots, \Gamma_s}} 
\prod_{i=1}^s x_i^{m_{\Gamma_i}(F)}.$$  
By convention, we let $C^{0,0,k}_{k,\Gamma}(x_1, \ldots, x_s) =1$.

Then we have the following 
multivariate version of Theorem \ref{thm:Gammacluster}.
\begin{theorem}\label{thm:jointcluster}
	 Let $\Gamma_i \subseteq \mathcal{P}^{0,0,k}_{r_ik}$ where $r_i \geq 2$ for $i=1, \ldots, s$. Then 
	 \begin{multline}\label{eqn:thm_joint}
	 1+\sum_{n \geq 1} \frac{t^{kn}}{kn!} 
	 \sum_{F \in \mathcal{P}^{0,0,k}_{kn}} 
\prod_{i=1}^mx_i^{\Gamma_i\text{-}\mathrm{mch}(F)} =  \\
	 \frac{1}{1-\sum_{n \geq 1} 
	 	\frac{t^{kn}}{(kn)!} C^{0,0,k}_{kn,\Gamma_1, \ldots, \Gamma_s}(x_1-1,x_2-1,\cdots,x_s-1)}.
	 \end{multline}
\end{theorem}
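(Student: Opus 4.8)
The plan is to mimic the proof of Theorem~\ref{thm:Gammacluster} essentially verbatim, replacing the single set $\Gamma$ by the sequence $\Gamma_1,\dots,\Gamma_s$ and the single mark $x$ by the colored marks $x_1,\dots,x_s$. First I would replace each $x_i$ by $x_i+1$ in \eqref{eqn:thm_joint}. For a fixed $F\in\mathcal{P}^{0,0,k}_{kn}$, expanding $\prod_{i=1}^{s}(x_i+1)^{\Gamma_i\text{-mch}(F)}$ and collecting the monomial $\prod_i x_i^{a_i}$ corresponds to choosing, for each $i$, a set of $a_i$ of the $\Gamma_i$-matches in $F$ to mark with $x_i$; since different choices give different elements of $\mathcal{MP}^{0,0,k}_{kn,\Gamma_1,\dots,\Gamma_s}$ (a single column is allowed to carry several colors), after the substitution the left-hand side of \eqref{eqn:thm_joint} becomes
$$1+\sum_{n\geq1}\frac{t^{kn}}{(kn)!}\sum_{F\in\mathcal{MP}^{0,0,k}_{kn,\Gamma_1,\dots,\Gamma_s}}\prod_{i=1}^{s}x_i^{m_{\Gamma_i}(F)}.$$
Thus it suffices to prove that this series equals $\bigl(1-\sum_{n\geq1}\frac{t^{kn}}{(kn)!}C^{0,0,k}_{kn,\Gamma_1,\dots,\Gamma_s}(x_1,\dots,x_s)\bigr)^{-1}$.

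For the second step I would expand the right-hand side as the geometric series $1+\sum_{m\geq1}\bigl(\sum_{n\geq1}\frac{t^{kn}}{(kn)!}C^{0,0,k}_{kn,\Gamma_1,\dots,\Gamma_s}(x_1,\dots,x_s)\bigr)^{m}$ and, for $s\geq1$, take the coefficient of $\frac{t^{ks}}{(ks)!}$, which, exactly as in the proof of Theorem~\ref{thm:Gammacluster}, equals
$$\sum_{m=1}^{s}\ \sum_{\substack{a_1+\cdots+a_m=s\\ a_i\geq1}}\binom{ks}{ka_1,\dots,ka_m}\prod_{j=1}^{m}C^{0,0,k}_{ka_j,\Gamma_1,\dots,\Gamma_s}(x_1,\dots,x_s).$$
This is read as a construction: pick $m$ and a composition $(a_1,\dots,a_m)$ of $s$; use the multinomial coefficient to choose an ordered set partition $S_1,\dots,S_m$ of $\{1,\dots,ks\}$ with $|S_i|=ka_i$; for each $i$ choose a joint cluster $C_i\in\mathcal{CM}^{0,0,k}_{ka_i,\Gamma_1,\dots,\Gamma_s}$ (interpreted as a single increasing column of height $k$ when $a_i=1$); relabel $C_i$ by the elements of $S_i$ in increasing order to obtain $D_i$; and concatenate $D_1\cdots D_m$. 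The result is an element $Q$ of $\mathcal{MP}^{0,0,k}_{ks,\Gamma_1,\dots,\Gamma_s}$ of weight $\prod_{i=1}^{m}\prod_{\ell=1}^{s}x_\ell^{m_{\Gamma_\ell}(C_i)}$, so the displayed sum is precisely $\sum_{Q}\prod_\ell x_\ell^{m_{\Gamma_\ell}(Q)}$ over $\mathcal{MP}^{0,0,k}_{ks,\Gamma_1,\dots,\Gamma_s}$, provided this construction is a bijection.

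The last step, which is the real content, is proving that every $Q\in\mathcal{MP}^{0,0,k}_{ks,\Gamma_1,\dots,\Gamma_s}$ arises from unique such data. Following the single-pattern proof, I would define a \emph{maximal joint $\Gamma_1,\dots,\Gamma_s$-subcluster} of $Q$ to be a block $Q[a,a+1,\dots,b]$ of consecutive columns such that $\red(Q[a,\dots,b])$ is a joint $\Gamma_1,\dots,\Gamma_s$-cluster and which is not properly contained in a larger such block, together with the convention that an unmarked column lying in no such block is a trivial maximal joint subcluster by itself. The key claim is that these maximal joint subclusters partition the columns of $Q$ into consecutive blocks $D_1,\dots,D_m$, from which $S_1,\dots,S_m$ and the joint clusters $C_1=\red(D_1),\dots,C_m=\red(D_m)$ are recovered. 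Verifying this claim is where conditions (1) and (2) in the definition of a joint cluster are used: condition (1) guarantees every marked column lies in some marked match, and condition (2) is exactly what forces the relation ``$a$ and $b$ are consecutive marked columns with $b$ covered by a marked match at $a$'' to chain together all marked columns of a cluster, so that maximal runs of marked columns under this relation, fattened by the columns their marked matches cover, give a well-defined block decomposition with no overlaps. Once the decomposition claim is in hand, summing over all $Q$ yields the desired identity; none of the remaining bookkeeping differs from the case $s=1$, and the relation-free setting means no sign-reversing involution is needed here (that is only required in the $\mathscr R$-restricted analogues such as Theorem~\ref{thm:mainGamma}). The main obstacle is therefore the uniqueness-of-decomposition claim; everything else is a routine transcription of the single-pattern argument.
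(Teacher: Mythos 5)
Your proposal is correct and follows exactly the route the paper intends: the paper gives no separate proof of Theorem \ref{thm:jointcluster}, stating only that the argument is the same as that of Theorem \ref{thm:Gammacluster} with marks replaced by colored marks, which is precisely the transcription you carry out. Your added discussion of why condition (2) in the definition of a joint cluster forces the maximal joint subclusters to give a well-defined block decomposition is a useful elaboration of the step the paper leaves implicit, and it is sound.
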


\subsubsection{Example}

For any fixed $k > 0$, we let $P_{k,m}$ be the element of 
$\mathcal{P}^{0,0,k}_{km}$ whose word is the identity permutation. 
Thus the $i^{\mathrm{th}}$ column of $P_{k,m}$ contains the numbers 
$k(i-1)+1, \ldots, ki$, reading from bottom to top. For example, 
$P_{3,4}$ and its corresponding Hasse diagram 
is pictured in Figure \ref{fig:P34}.

\fig{1.20}{P34}{The pattern $P_{3,4}$.}

Our goal in this subsection is to compute the generating function 
\begin{equation}\label{gfmm+1}
1+\sum_{n \geq 1} \frac{t^{kn}}{kn!} 
	 \sum_{F \in \mathcal{P}^{0,0,k}_{kn}} 
x^{P_{k,m}\text{-}\mathrm{mch}(F)}y^{P_{k,m+1}\text{-}\mathrm{mch}(F)}
\end{equation}
for any $m \geq 2$ and $k \geq 1$.

By Theorem \ref{thm:Gammacluster}, we can compute (\ref{gfmm+1}) if 
we can compute the polynomials $G^{0,0,k}_{kn,P_{k,m},P_{k,m+1}}(x,y)$. 
Our definitions ensures that if $C$ is a $P_{k,m},P_{k,(m+1)}$ cluster, 
then the underlying array of $C$ is just $P_{k,t}$ for some $t$. 

It is easy to check that 
\begin{align*}
&C^{0,0,k}_{k,P_{k,m},P_{k,m+1}}(x,y) =1, \\
&C^{0,0,2k}_{k,P_{k,m},P_{k,m+1}}(x,y) = \cdots = C^{0,0,k}_{(m-1)k,P_{k,m},P_{k,m+1}}(x,y) =0, \ \mbox{and} \\
&C^{0,0,k}_{km,P_{k,m},P_{k,m+1}}(x,y) =x.
\end{align*}

For $n \geq m+1$, we can classify the $P_{k,m},P_{k,(m+1)}$ clusters $C$ 
with $n$ columns by 
the label of the first column. We have to cases.\\
\ \\
{\bf Case 1.} $y$ is a label on the first column $C$. 
In this case, we let $j\geq 1$ be the least number such that 
column $j+1$ is also marked. Clearly, we must have that 
$1 \leq j \leq m$ because if columns 
$2, \ldots, m$ are not marked, then column $m+1$ must be marked. 
In this case, we can remove, 
the first $j$ columns of $C$ and we will be left with 
a $P_{k,m},P_{k,(m+1)}$ cluster with $n-j$ columns. Note that 
that the first columns can be marked with $x$ or not. It follows 
that such clusters contribute 
$$(1+x)y\sum_{j=1}^m  C^{0,0,k}_{k(n-j),P_{k,m},P_{k,m+1}}(x,y)$$ to 
$C^{0,0,k}_{kn,P_{k,m},P_{k,m+1}}(x,y)$. \\
\ \\
{\bf Case 2.} $y$ is not a label on the first column $C$. 
In this case, the first column of $C$ is marked with an $x$. Again,
we let $j\geq 1$ be the least number such that 
column $j+1$ is also marked. In this case, we must have that 
$1 \leq j \leq m-1$ because if columns 
$2, \ldots, m-1$ are not marked, then column $m$ must be marked. 
In this case, we can remove, 
the first $j$ columns of $C$ and we will be left with 
a $P_{k,m},P_{k,(m+1)}$ cluster with $n-j$ columns. Note that 
that the first columns can be marked with $x$ or not. It follows 
that such clusters contribute 
$$x\sum_{j=1}^{k-1}  C^{0,0,k}_{k(n-j),P_{k,m},P_{k,m+1}}(x,y)$$ to 
$C^{0,0,k}_{kn,P_{k,m},P_{k,m+1}}(x,y)$. \\
\ \\

It follows that for $n \geq m+1$, 
\begin{eqnarray}\label{mm+1rec}
C^{0,0,k}_{kn,P_{k,m},P_{k,m+1}}(x,y) &=&  (xy+y)
C^{0,0,k}_{k(n-m),P_{k,m},P_{k,m+1}}(x,y)+ \nonumber \\
&&(xy+x+y)\sum_{j=1}^{m-1}  C^{0,0,k}_{k(n-j),P_{k,m},P_{k,m+1}}(x,y).
\end{eqnarray}

Let 
\begin{equation}
C^{0,0,k}_{P_{k,m},P_{k,m+1}}(x,y,t) = 
\sum_{n \geq 1} \frac{t^{kn}}{(kn)!}  
C^{0,0,k}_{kn,P_{k,m},P_{k,m+1}}(x,y).
\end{equation}

Then it follows that 
\begin{eqnarray}\label{eq1.recmm+1}
C^{0,0,k}_{P_{k,m},P_{k,m+1}}(x,y,t)&=& \nonumber
\frac{t^k}{k!} + \frac{xt^{mk}}{(mk)!}+ \\
&& (xy+x_y)\sum_{j=1}^{m-1} \sum_{n \geq 2} 
C^{0,0,k}_{k(n-j),P_{k,m},P_{k,m+1}}(x,y)+ \nonumber\\
&&(xy+y) \sum_{n \geq 2} C^{0,0,k}_{k(n-k),P_{k,m},P_{k,m+1}}(x,y).
\end{eqnarray}
Taking the partial derivative $\frac{\partial^{mk}}{\partial t^{mk}}$ of 
both sides of this equation leads to the following partial differential 
equation:
\begin{eqnarray}\label{eq2.recmm+1}
\frac{\partial^{mk}}{\partial t^{mk}}C^{0,0,k}_{P_{k,m},P_{k,m+1}}(x,y,t)&=& x+(xy+x_y)\sum_{j=1}^{m-1} \frac{\partial^{(m-j)k}}{\partial t^{(m-j)k}}
\left( C^{0,0,k}_{P_{k,m},P_{k,m+1}}(x,y,t)-\frac{t^k}{k!}\right)+ \nonumber\\
&&(xy+y) C^{0,0,k}_{P_{k,m},P_{k,m+1}}(x,y,t).
\end{eqnarray}

In the case where $m=2$ and $k=1$ so that $P_{m,k}=12$ and $P_{k,m+1} = 123$, 
then we obtain the differential equation 
$$\frac{\partial^2}{\partial t^2}C^{0,0,1}_{12,123}(x,y,t)= 
(xy+x+y)\frac{\partial}{\partial t}C^{0,0,1}_{12,123}(x,y,t)+(xy+y)(C^{0,0,1}_{12,123}(x,y,t)-1)$$
with boundary conditions
$$
C^{0,0,1}_{12,123}(x,y,0)=0~~~\text{and}~~~\left.\frac{\partial C^{0,0,1}_{12,123}(x,y,t)}{\partial t}\right|_{t=0}=1.
$$

Solving the differential equation above, we get
\begin{eqnarray*}
	C^{0,0,1}_{12,123}(x,y,t)&=&1-e^{\frac{1}{2} t (x y+x+y)} \cosh \left(\frac{1}{2} t \sqrt{(x y+x+y)^2+4 (x+1) y}\right)\\&&+\frac{(x y+x+y+2) e^{\frac{1}{2} t (x y+x+y)} \sinh \left(\frac{1}{2} t \sqrt{(x y+x+y)^2+4 (x+1) y}\right)}{\sqrt{(x y+x+y)^2+4 (x+1) y}}.
\end{eqnarray*}

Then
\begin{eqnarray*}
1+ \sum_{n \geq 1} \frac{t^n}{n!} \sum_{\sg \in S_n} x^{12\mathrm{-mch}(\sg)}
y^{123\mathrm{-mch}(\sg)} &=&\frac{1}{1-C^{0,0,1}_{12,123}(x-1,y-1,t)}\\
	&=&\frac{2 z(x,y)e^{ \frac{1}{2} t \left(-x y+z(x,y)+1\right)}}{\left(-x y+z(x,y)-1\right) e^{t z(x,y)}+x y+z(x,y)+1},
\end{eqnarray*}
where $z(x,y)=\sqrt{x (y (x y+2)-4)+1}$.

A few initial terms of the expansion of $1+ \sum_{n \geq 1} \frac{t^n}{n!} \sum_{\sg \in S_n} x^{12\mathrm{-mch}(\sg)}
y^{123\mathrm{-mch}(\sg)}$ are
\begin{eqnarray*}
	&&1+t+ (1+x) \frac{t^2}{2!}+(1+4 x+x^2 y) \frac{t^3}{3!}+\ (1+11 x+5 x^2+6 x^2 y+x^3 y^2) \frac{t^4}{4!}\\
	&+&\left(1+26 x+43 x^2+23 x^2 y+18 x^3 y+8 x^3 y^2+x^4 y^3\right)\frac{t^5}{5!}\\
	&+& \left(1+57 x+230 x^2+61 x^3+72 x^2 y+202 x^3 y+39 x^3 y^2+47 x^4 y^2+10 x^4 y^3+x^5 y^4\right) \frac{t^6}{6!}\\
	&+&\cdots.
\end{eqnarray*}

In the case where $k=m=2$, the we obtain the differential equation
$$\frac{\partial^4}{\partial t^4} C^{0,0,2}_{P_{2,2},P_{2,3}}(x,y,t)= 
(xy+x+y)\frac{\partial^2}{\partial t^2}C^{0,0,2}_{P_{2,2},P_{2,3}}(x,y,t)+(xy+y)(C^{0,0,2}_{P_{2,2}.P_{2,3}}(x,y,t)-1)$$
with boundary conditions
\begin{eqnarray*}
&&C^{0,0,2}_{P_{2,2},P_{2,3}}(x,y,0)=0, \\
&&\left. \frac{\partial C^{0,0,2}_{P_{2,2},P_{2,3}}(x,y,t)}{\partial t}\right|_{t=0}=0,\\
&&\left. \frac{\partial^2 C^{0,0,2}_{P_{2,2},P_{2,3}}(x,y,t)}{\partial t^2}\right|_{t=0}=1, \ \mbox{and} \\
&&\left. \frac{\partial^3 C^{0,0,2}_{P_{2,2},P_{2,3}}(x,y,t)}{\partial t^3}\right|_{t=0}=0.
\end{eqnarray*}

We used Mathematica to solve this differential equation and 
obtained that 
\begin{eqnarray*}
C^{0,0,2}_{P_{2,2},P_{2,3}}(x,y,t)&=& \frac{1}{4} 
\left( 4+2\cosh(\frac{t}{\sqrt{2}}B(x,y,t))
\left(\frac{1+(1+x)(1+y)}{A(x,y,t)}-1\right) - \right. \\
&&\left. 2\cosh(\frac{t}{\sqrt{2}}C(x,y,t))
\left(\frac{1+(1+x)(1+y)}{A(x,y,t)}+1\right)\right)
\end{eqnarray*}
where 
\begin{eqnarray*}
A(x,y,t) &=& \sqrt{(x+y+xy)^2+4(1+x)y}, \\
B(x,y,t)&=& \sqrt{xy+x+y+A(x,y,t)}, \ \mbox{and} \\
C(x,y,t)&=& \sqrt{xy+x+y-A(x,y,t)}.
\end{eqnarray*}
Thus 
\begin{equation}
1+ \sum_{n \geq 1} \frac{t^{2n}}{(2n)!} 
\sum_{F \in P^{0,0,2}_{2n}} x^{P_{2,2}\mathrm{-mch}(F)}
y^{P_{2,3}\mathrm{-mch}(F)} = 
\frac{1}{1-C^{0,0,2}_{P_{2,2},P_{2,3}}(x-1,y-1,t)}.
\end{equation}
The first few terms of this series are 
\begin{align*}
&1+\frac{t^2}{2}+\frac{t^4}{4!}(5+x)+\frac{t^6}{6!}(61+28x+x^2y)\\
&+\frac{t^8}{8!}(1385+1011x+69x^2+54x^2y+x^3y^2)+\\
&+\frac{t^{10}}{10!}(50521+50666x+8523x^2+3183x^2y+418x^3y+130x^3y^2+x^4y^3) + \cdots .
\end{align*}

\subsection{Generalized joint clusters}

Next suppose that we are given a binary relation $\mathscr R$ between $k \times 1$ arrays of integers and sequences of 
sets of patterns  where $\Gamma_i \subseteq \mathcal{P}^{0,0,k}_{r_ik}$ 
where $r_i \geq 2$ for , $1\leq i\leq m$.  
Given our definition of joint $\Gamma_1, \ldots, \Gamma_s$-clusters, we can easily modify the 
definition of generalized $\Gamma$-clusters given in 
Definition \ref{def:gc} to 
generalized joint $\Gamma_1, \ldots, \Gamma_s$-clusters. 
\begin{definition}
We say that $Q \in \mathcal{MP}^{0,0,k}_{kn,\Gamma_1, \ldots, \Gamma_s}$ is a 
	\textbf{ generalized joint $\Gamma_1, \ldots, \Gamma_s,\mathscr R$-cluster} if 
	we can write $Q=B_1B_2\cdots B_h$ where $B_i$ are blocks of consecutive columns in $Q$ such that  
	\begin{enumerate}
		\item either $B_i$ is a single column or $B_i$ consists of $r$-columns 
		where $r \geq 2$, $\red(B_i)$ is a joint $\Gamma_1, \ldots, \Gamma_s$-cluster in $\mathcal{MP}^{0,0,k}_{kn,\Gamma_1, \ldots, \Gamma_s}$, and any pair of consecutive columns in $B_i$ are in $\mathscr R$ and
		\item for $1\leq i\leq h-1$, 
the pair $(\last(B_i),\first(B_{i+1}))$ is not in $\mathscr R$.  
	\end{enumerate}
\end{definition}

Let $\mathcal{GC}^{0,0,k}_{kn,\Gamma_1, \ldots, \Gamma_s,\mathscr R}$ denote the set of all generalized joint 
$\Gamma_1, \ldots, \Gamma_s,\mathscr R$-clusters which have $n$ columns of height $k$. For example, 
suppose that $\mathscr R$ is the relation that holds for a pair of columns $(C,D)$ if 
and only if the top element of column $C$ is greater than the bottom element 
of column $D$ and $\Gamma_1 = \{P_1\}$ and $\Gamma_2=\{P_2\}$ where 
$P_1 = \begin{array}{|c|c|c|}
\hline 
6 & 5 & 4 \\
\hline
1 & 2 & 3 \\
\hline
\end{array}$ and $P_2 = \begin{array}{|c|c|c|c|}
\hline 
8 & 7 & 6 & 5 \\
\hline
1 & 2 & 3 & 4\\
\hline
\end{array}$.
Then in Figure \ref{fig:PRjcluster}, we have pictured a generalized joint 
$\Gamma_1,\Gamma_2$-cluster with 
5 blocks $B_1,B_2,B_3,B_4,B_5$. 

\fig{1.20}{PRjcluster}{A generalized $\Gamma_1,\Gamma_2$-cluster.}

Given $Q =B_1 B_2 \ldots B_h \in \mathcal{GC}^{0,0,k}_{kn,\Gamma_1, \ldots, \Gamma_s,\mathscr R}$, we 
define the weight of $B_i$, 
$$
\omega_{\Gamma_1, \ldots, \Gamma_s,\mathscr R}(B_i):=
\begin{cases}
1,~~~~~~~~~~~~~~~~~~~~~~~\text{ if }\col(B_i)=1,\\
\prod_{j=1}^s x_j^{m_{\Gamma_j}(\red(B_i))}~~\text{ if }\col(B_i)\geq 2,
\end{cases}
$$
where $\col(B_i)$ is the number of columns in $B_i$. We 
define the weight of $Q$, $\omega_{\Gamma_1, \ldots, \Gamma_s,\mathscr R}(Q)$, to be $(-1)^{h-1}\prod_{i=1}^h \omega_{\Gamma_1, \ldots, \Gamma_s,\mathscr R}(B_i)$. We then define the generalized 
joint $\Gamma_1, \ldots, \Gamma_s,\mathscr R$-cluster polynomial to be 
\begin{equation}\label{eqn:gjcluster_poly}
GC^{0,0,k}_{kn,\Gamma_1, \ldots, \Gamma_s,\mathscr R}(x_1,\cdots,x_m) := \sum_{Q \in \mathcal{GC}^{0,0,k}_{kn,\Gamma_1, \ldots, \Gamma_s,\mathscr R}} \omega_{\Gamma_1, \ldots, \Gamma_s,\mathscr R}(Q).
\end{equation}

Then we have the following theorem which is the multivariate version of Theorem \ref{thm:mainGamma}.

\begin{theorem}\label{thm:gjointcluster}
	Let $\mathscr R$ be a binary relation on pairs of 
columns $(C,D)$ of height $k$ 
	which are filled 
	with integers which are increasing from bottom to top. 
Let $\Gamma_1, \ldots, \Gamma_s$ be a  sequence of sets of patterns 
such that  $\Gamma_i \subseteq  \mathcal{P}^{0,0,k}_{r_ik}$
where $r_i \geq $ for $1\leq i\leq s$. Then 
	\begin{multline*}
	1+\sum_{n \geq 1} \frac{t^{kn}}{(kn)!} 
	\sum_{F \in \mathcal{P}^{0,0,k}_{kn,\mathscr R}}\prod_{i=1}^s 
x_i^{\Gamma_i\text{-mch}(F)} =\\  
	\frac{1}{1-\sum_{n \geq 1} \frac{t^{kn}}{(kn)!} 
GC^{0,0,k}_{kn,\Gamma_1, \ldots, \Gamma_s,\mathscr R}(x_1-1,\cdots,x_m-1)}.
	\end{multline*}
\end{theorem}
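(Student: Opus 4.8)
The plan is to follow the proof of Theorem~\ref{thm:mainGamma} essentially verbatim, replacing the single-pattern objects by their joint analogues and the single mark $x$ by the multiset of marks from $\{x_1,\ldots,x_s\}$ recorded by the weight $\prod_{i=1}^s x_i^{m_{\Gamma_i}}$. First I would substitute $x_i \mapsto x_i+1$ for each $i$ in the claimed identity. Expanding $\prod_{i=1}^s (x_i+1)^{\Gamma_i\text{-mch}(F)}$ by the binomial theorem in each variable shows, exactly as in \eqref{eq:cluster2}, that the left-hand side becomes $1+\sum_{n\geq 1}\frac{t^{kn}}{(kn)!}\sum_{F\in\mathcal{MP}^{0,0,k}_{kn,\Gamma_1,\ldots,\Gamma_s,\mathscr R}}\prod_{i=1}^s x_i^{m_{\Gamma_i}(F)}$, where $\mathcal{MP}^{0,0,k}_{kn,\Gamma_1,\ldots,\Gamma_s,\mathscr R}$ is the set of $F\in\mathcal{P}^{0,0,k}_{kn,\mathscr R}$ with some subset of each kind of $\Gamma_i$-match marked. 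So it suffices to show that this equals $\big(1-\sum_{n\geq 1}\frac{t^{kn}}{(kn)!}GC^{0,0,k}_{kn,\Gamma_1,\ldots,\Gamma_s,\mathscr R}(x_1,\ldots,x_s)\big)^{-1}$.

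Next I would expand the geometric series on the right and take the coefficient of $\frac{t^{ks}}{(ks)!}$, obtaining the multivariate analogue of \eqref{eq:prcluster4}, namely $\sum_{m=1}^s\sum_{a_1+\cdots+a_m=s,\,a_i\geq 1}\binom{ks}{ka_1,\ldots,ka_m}\prod_{j=1}^m GC^{0,0,k}_{ka_j,\Gamma_1,\ldots,\Gamma_s,\mathscr R}(x_1,\ldots,x_s)$. This sum is read combinatorially just as before: choose $m$, a composition $(a_1,\ldots,a_m)$ of $s$, an ordered set partition $S_1,\ldots,S_m$ of $\{1,\ldots,ks\}$ with $|S_i|=ka_i$, and for each $i$ a generalized joint cluster $G_i\in\mathcal{GC}^{0,0,k}_{ka_i,\Gamma_1,\ldots,\Gamma_s,\mathscr R}$ (a single unmarked column of height $k$ when $a_i=1$); relabelling $G_i$ by the elements of $S_i$ and concatenating produces a marked filling, and the set of all such concatenations $E_1\cdots E_m$, with each $\red(E_i)$ a generalized joint cluster, is the multivariate version of $\mathcal{HGC}_{ks,\Gamma_1,\ldots,\Gamma_s,\mathscr R}$.

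The one step that needs checking is the sign-reversing involution $\theta$, but I expect it to go through unchanged: scanning $E_1\cdots E_m$ from the left, one either splits off the first block of the first multi-block $E_i$, or merges the first single-block $E_i$ with $E_{i+1}$ when $(\last(B^{(i)}_1),\first(E_{i+1}))\notin\mathscr R$. This operation touches only the block structure and $\mathscr R$, never the marks, so the multiset of marks—and hence the weight $\prod_i x_i^{m_{\Gamma_i}}$—is preserved, and the sign identities $\omega(E_i)=-\omega(E_i^*)\omega(E_i^{**})$ and $\omega(E_i)\omega(E_{i+1})=-\omega(E)$ hold for exactly the reasons given in the scalar proof. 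The fixed points of $\theta$ are those $Q$ all of whose $E_i$ are single unmarked columns of height $k$ or single blocks reducing to joint clusters, with every consecutive pair of columns in $\mathscr R$; these are precisely the elements of $\mathcal{MP}^{0,0,k}_{ks,\mathscr R}$ decomposed into maximal joint $\Gamma_1,\ldots,\Gamma_s$-subclusters. The anticipated (mild) obstacle is verifying that this decomposition is well defined now that one column may carry several marks—one must check that condition~(2) in the definition of a joint $\Gamma_1,\ldots,\Gamma_s$-cluster is exactly what keeps every marked $\Gamma_i$-match confined to a single block—after which the signed sum collapses to $\sum_{F\in\mathcal{MP}^{0,0,k}_{ks,\mathscr R}}\prod_i x_i^{m_{\Gamma_i}(F)}$, which is what we want.
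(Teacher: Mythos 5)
Your proposal is correct and is exactly the argument the paper intends: the paper explicitly omits the proof of Theorem~\ref{thm:gjointcluster}, stating that it is essentially the same as the proof of Theorem~\ref{thm:mainGamma}, and your line-by-line adaptation (multivariate substitution $x_i\mapsto x_i+1$, the coefficient extraction, the unchanged sign-reversing involution $\theta$, and the identification of fixed points with maximal joint subclusters) is that same proof. Your observation that condition~(2) of the joint-cluster definition is what keeps the maximal-subcluster decomposition well defined when a column carries several marks is the right point to check and is handled correctly.
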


There are obvious analogues of start, end, and start-end general 
clusters in the joint cluster setting. Throughout the rest of 
this section, we shall assume 
that we are given a binary relation $\mathscr R$ 
on columns of integers and a sequence $\Gamma_1, \ldots, \Gamma_s$ 
of a sets of patterns such that 
$\Gamma_i \subseteq \mathcal{P}^{0,0,k}_{r_ik}$ 
where $r_i \geq 2$ for $i=1, \ldots, s$.  

\begin{definition}
We say that $Q \in \mathcal{MP}^{0,j,k}_{kn+j,\Gamma_1, \ldots, \Gamma_s}$ 
is a 
{\bf generalized joint $\Gamma_1, \ldots, \Gamma_s,\mathscr R$-end-cluster} if 
we can write $Q=B_1B_2\cdots B_m$ where $B_i$ are blocks 
of consecutive columns 
in $Q$ such that  
\begin{enumerate}
\item $B_m$ is a column of height $j$, 	
\item for $i < m$, 
either $B_i$ is a single column or $B_i$ consists of $r$-columns 
where $r \geq 2$, $\red(B_i)$ is a joint $\Gamma_1, \ldots, \Gamma_s$-cluster in $\mathcal{MP}_{kr,\Gamma_1, \ldots, \Gamma_s}$, 
and any pair of consecutive columns in $B_i$ are in $\mathscr R$ and
	\item for $1\leq i\leq m-1$, the pair $(\last(B_i),\first(B_{i+1})$ is 
not in $\mathscr R$.  
\end{enumerate}
\end{definition}
Let $\mathcal{GEC}^{0,j,k}_{kn+j,\Gamma_1, \ldots, \Gamma_s,\mathscr R}$ denote the set of all generalized 
$\Gamma_1, \ldots, \Gamma_s,\mathscr R$-end-clusters which have $n$ columns of height $k$ followed 
by a column of height $j$. 
Given $Q =B_1 B_2 \ldots B_m \in \mathcal{GEC}^{0,j,k}_{kn+j,\Gamma_1, \ldots, \Gamma_s,\mathscr R}$, we 
define the weight of $B_i$, $w_{\Gamma_1, \ldots, \Gamma_s,\mathscr R}(B_i)$, to be 1 if $B_i$ is a single column and 
$\prod_{i=1}^s x_i^{m_{\Gamma_i}(\red(B_i))}$ if $B_i$ is order isomorphic to a $\Gamma_1, \ldots, \Gamma_s$-cluster. We 
define the weight of $Q$, $w_{\Gamma_1, \ldots, \Gamma_s,\mathscr R}(Q)$, 
to be 
$(-1)^{m-1}\prod_{i=1}^s w_{\Gamma_1, \ldots, \Gamma_s,\mathscr R}(B_i)$. We 
then let 
\begin{equation}\label{GEPRC2}
GEC^{0,j,k}_{kn+j,\Gamma_1, \ldots, \Gamma_s,\mathscr R}(x_1,\ldots,x_s) = 
\sum_{Q \in \mathcal{GC}^{0,j,k}_{kn+j,\Gamma_1, \ldots, \Gamma_s,\mathscr R}} w_{\Gamma_1, \ldots, \Gamma_s,\mathscr R}(Q).
\end{equation}

Then we have the following multivariate analogue of Theorem 
\ref{thm:jmainGamma}. 

\begin{theorem}\label{thm:jmainGammajoint}
Let $\mathscr R$ be a binary relation on pairs of columns $(C,D)$  
which are filled 
with integers which are increasing from bottom to top. 
 Let $\Gamma_1, \ldots, \Gamma_s$ be a sequence of a sets of patterns 
such that $\Gamma_i \subseteq \mathcal{P}^{0,0,k}_{r_ik}$ 
where $r_i \geq 2$ for $i=1, \ldots, s$. Then 
\begin{multline}\label{eq:endprclusterjoint}
\sum_{n \geq 0} \frac{t^{kn+j}}{(kn+j)!} 
\sum_{F \in \mathcal{P}^{0,j,k}_{kn+j,\mathscr R}} 
\prod_{i=1}^s x_i^{\Gamma_i\text{-}\mathrm{mch}(F)} =  \\
\frac{\sum_{n \geq 0} 
\frac{t^{kn+j}}{(kn+j)!} GEC^{0,j,k}_{kn+j,\Gamma_1, \ldots, \Gamma_s,\mathscr R}(x_1-1, \ldots,x_s-1)}{1-\sum_{n \geq 1} 
\frac{t^{kn}}{(kn)!} GC^{0,0,k}_{kn,\Gamma_1, \ldots, \Gamma_s,\mathscr R}
(x_1-1, \ldots,x_s-1)}.
\end{multline}
\end{theorem}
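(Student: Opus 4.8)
The plan is to follow the proof of Theorem~\ref{thm:jmainGamma} essentially line by line, replacing single-pattern clusters by joint $\Gamma_1,\ldots,\Gamma_s$-clusters and carrying the vector of marking variables $(x_1,\ldots,x_s)$ in place of the single variable $x$. First I would substitute $x_i\mapsto x_i+1$ for every $i$ in \eqref{eq:endprclusterjoint}. Expanding $\prod_{i=1}^s(x_i+1)^{\Gamma_i\text{-}\mathrm{mch}(F)}$ by the multibinomial theorem, exactly as in \eqref{eq:endprcluster2}, shows that the left-hand side becomes $\sum_{n\geq 0}\frac{t^{kn+j}}{(kn+j)!}\sum_{F\in\mathcal{MP}^{0,j,k}_{kn+j,\Gamma_1,\ldots,\Gamma_s,\mathscr R}}\prod_{i=1}^s x_i^{m_{\Gamma_i}(F)}$, the sum running over all ways of marking some of the $\Gamma_i$-matches of an $F\in\mathcal{P}^{0,j,k}_{kn+j,\mathscr R}$; as in the joint-cluster setting a single column may carry several of the labels $x_1,\ldots,x_s$, and since all patterns in $\Gamma_i$ have the same width there is at most one $\Gamma_i$-match starting at a given column, so $m_{\Gamma_i}(F)$ indeed equals the number of marked $\Gamma_i$-matches. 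Hence it suffices to prove the identity with $GEC$ and $GC$ evaluated at $(x_1,\ldots,x_s)$ instead of $(x_1-1,\ldots,x_s-1)$.

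Next I would expand the denominator $\bigl(1-\sum_{n\geq1}\frac{t^{kn}}{(kn)!}GC^{0,0,k}_{kn,\Gamma_1,\ldots,\Gamma_s,\mathscr R}\bigr)^{-1}$ as a geometric series, multiply by the numerator, and extract the coefficient of $\frac{t^{ks+j}}{(ks+j)!}$ on both sides, precisely as in \eqref{eq:endprcluster4}. The right-hand side becomes a finite sum over a choice of $a+b=s$ with $a,b\geq0$, a length $m$ with $1\leq m\leq a$, a composition $a_1+\cdots+a_m=a$ with $a_i\geq1$, a multinomial coefficient $\binom{ks+j}{ka_1,\ldots,ka_m,kb+j}$ distributing $\{1,\ldots,ks+j\}$ into sets $S_1,\ldots,S_m,S_{m+1}$ with $|S_i|=ka_i$ and $|S_{m+1}|=kb+j$, generalized joint $\Gamma_1,\ldots,\Gamma_s,\mathscr R$-clusters $G_i$ on each $S_i$ with $i\leq m$, and a generalized joint $\Gamma_1,\ldots,\Gamma_s,\mathscr R$-end-cluster $G_{m+1}$ on $S_{m+1}$, all weighted by the product of their $\omega_{\Gamma_1,\ldots,\Gamma_s,\mathscr R}$-weights. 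Relabelling each $G_i$ by the elements of $S_i$ in increasing order and concatenating yields an element $Q=E_1\cdots E_mE_{m+1}$ of a set $\mathcal{HGEC}_{ks+j,\Gamma_1,\ldots,\Gamma_s,\mathscr R}$ of decorated fillings, exactly as in the construction illustrated in Figure~\ref{fig:EPRCluster2}.

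I would then introduce the sign-reversing involution $\theta$ on $\mathcal{HGEC}_{ks+j,\Gamma_1,\ldots,\Gamma_s,\mathscr R}$, defined verbatim as in the proof of Theorem~\ref{thm:jmainGamma}: scanning $E_1,E_2,\ldots$ from the left, one finds the first $E_i$ whose block structure has more than one block (Case 1) or which is a single block $B^{(i)}_1$ with $(\last(B^{(i)}_1),\first(E_{i+1}))\notin\mathscr R$ (Case 2); in Case 1 one splits off the leading block, the tail remaining a generalized joint end-cluster when $i=m+1$; in Case 2 one merges $E_i$ with $E_{i+1}$, the result being a generalized joint end-cluster when $i=m$; otherwise $Q$ is fixed. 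Each non-fixed $Q$ is matched with an object carrying the opposite sign $(-1)^{(\cdot)-1}$, so the right-hand side collapses to the sum over fixed points. A fixed point has every $E_i$ with $i\leq m$ a single column or a single joint $\Gamma_1,\ldots,\Gamma_s$-cluster block, $E_{m+1}$ a single column of height $j$, and $(\last(E_i),\first(E_{i+1}))\in\mathscr R$ for all $i$; since consecutive columns inside a joint cluster block also lie in $\mathscr R$ by definition, every pair of consecutive columns of $Q$ lies in $\mathscr R$, so $Q\in\mathcal{MP}^{0,j,k}_{ks+j,\Gamma_1,\ldots,\Gamma_s,\mathscr R}$, and its $E_i$ are forced to be its maximal joint-$\Gamma$-subclusters. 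Conversely, every such marked filling, decomposed into its maximal joint-$\Gamma$-subclusters followed by its terminal height-$j$ column, is a fixed point of weight $\prod_i x_i^{m_{\Gamma_i}(F)}$; summing over them recovers the left-hand side.

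I expect the only genuine subtlety, beyond pure bookkeeping, to be verifying that the decomposition of an element of $\mathcal{MP}^{0,j,k}_{ks+j,\Gamma_1,\ldots,\Gamma_s,\mathscr R}$ into maximal joint-$\Gamma$-subclusters is well defined and unique. In the joint setting a column may carry several labels and the overlap condition in the definition of a joint $\Gamma_1,\ldots,\Gamma_s$-cluster requires only the \emph{existence} of some index $i$ and a marked $\Gamma_i$-match linking two consecutive marked columns, rather than a single fixed pattern; one must check, exactly as in the proof of Theorem~\ref{thm:Gammacluster}, that two maximal joint-$\Gamma$-subclusters sharing a column would in fact merge, so that these subclusters partition the columns of $F$. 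Once this is established, every remaining verification (the signs, the multinomials, the relabelling bijection, and the fact that $\theta$ is an involution) is word for word identical to the single-pattern argument and need not be rewritten.
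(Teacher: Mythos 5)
Your proposal is correct and is exactly the argument the paper intends: the authors explicitly omit the proofs of the multivariate analogues in Section 4 on the grounds that they are "essentially the same" as the single-pattern proofs, and your line-by-line transport of the proof of Theorem \ref{thm:jmainGamma} — the $x_i\mapsto x_i+1$ substitution, the coefficient extraction, the relabelling construction, and the sign-reversing involution $\theta$ with the same fixed-point analysis — is precisely that adaptation. Your added remark that one must verify the maximal joint-$\Gamma$-subcluster decomposition is well defined (two overlapping joint subclusters merge, since the overlap condition only requires \emph{some} marked $\Gamma_i$-match linking consecutive marked columns) is the one genuine point of care in the joint setting, and you handle it correctly.
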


\begin{definition}
We say that $Q \in \mathcal{MP}^{i,0,k}_{i+kn,\Gamma}$ is a 
{\bf generalized joint $\Gamma_1, \ldots, \Gamma_s,\mathscr R$-start-cluster} 
if 
we can write $Q=B_1B_2\cdots B_m$ where $B_i$ are blocks 
of consecutive columns 
in $Q$ such that  
\begin{enumerate}
\item $B_1$ is a single column of height $i$, 	
\item for $2< a \leq m$, 
either $B_a$ is a single column or $B_a$ consists of $r$-columns 
where $r \geq 2$, $\red(B_a)$ is a joint 
$\Gamma_1, \ldots, \Gamma_s$-cluster in $\mathcal{MP}_{kr,\Gamma_1, \ldots, \Gamma_s}$, 
and any pair of consecutive columns in $B_i$ are in $\mathscr R$ and
	\item for $1\leq i\leq m-1$, the pair $(\last(B_i),\first(B_{i+1})$ is 
not in $\mathscr R$.  
\end{enumerate}
\end{definition}
Let $\mathcal{GSC}^{i,0,k}_{i+kn,\Gamma,\mathscr R}$ denote the set of all generalized 
$\Gamma,\mathscr R$-start-clusters which start with a column of height $i$ and 
which is followed by $n$ columns of height $k$. 
Given $Q =B_1 B_2 \ldots B_m \in \mathcal{GEC}^{i,0,k}_{i+kn,\Gamma,\mathscr R}$, we 
define the weight of $B_i$, $w_{\Gamma_1, \ldots, \Gamma_s,\mathscr R}(B_i)$, to be 1 if $B_i$ is a single column and 
$\prod_{j=1}^s x_i^{m_{\Gamma_j}(\red(B_i))}$ if $B_i$ is order isomorphic to a joint $\Gamma_1, \ldots, \Gamma_s$-cluster. We 
define the weight of $Q$, $w_{\Gamma_1, \ldots, \Gamma_s,\mathscr R}(Q)$, 
to be 
$(-1)^{m-1}\prod_{i=1}^m w_{\Gamma_1, \ldots, \Gamma_s,\mathscr R}(B_i)$. We 
then let 
\begin{equation}\label{GSPRC2}
GSC^{i,0,k}_{i+kn,\Gamma_1, \ldots, \Gamma_s,\mathscr R}(x) = 
\sum_{Q \in \mathcal{GC}^{i,0,,k}_{i+kn,\Gamma_1, \ldots, \Gamma_s,\mathscr R}} w_{\Gamma_1, \ldots, \Gamma_s,\mathscr R}(Q). 
\end{equation}

Let $\mathcal{P}^{i,0,k}_{i+kn,\mathscr R}$ denote the set of all elements 
$F \in \mathcal{P}^{i,0,k}_{i+kn}$ such that the relation $\mathscr R$ holds 
for any pair of consecutive columns in $F$. 
Then we have the following theorem. 

\begin{theorem}\label{thm:imainGammajoint}
Let $\mathscr R$ be a binary relation on pairs of columns $(C,D)$  
which are filled 
with integers which are increasing from bottom to top. 
Let $\Gamma_1, \ldots, \Gamma_s$ be a sequence of a sets of patterns 
such that $\Gamma_i \subseteq \mathcal{P}^{0,0,k}_{r_ik}$ 
where $r_i \geq 2$ for $i=1, \ldots, s$. Then 
\begin{multline}\label{eq:stprclusterjoint}
\sum_{n \geq 0} \frac{t^{i+kn}}{(i+kn)!} 
\sum_{F \in \mathcal{P}^{i,0,k}_{i+kn,\mathscr R}} \prod_{i=1}^s 
x_i^{\Gamma_i\text{-}\mathrm{mch}(F)} =  \\
\frac{\sum_{n \geq 0} 
\frac{t^{i+kn}}{(i+kn)!} GSC^{i,0,k}_{i+kn,\Gamma_1, \ldots, \Gamma_s,\mathscr R}(x_1-1,\ldots,x_s-1)}{1-\sum_{n \geq 1} 
\frac{t^{kn}}{(kn)!} GC^{0,0,k}_{kn,\Gamma_1, \ldots, \Gamma_s,\mathscr R}
(x_1-1,\ldots,x_s-1)}.
\end{multline}
\end{theorem}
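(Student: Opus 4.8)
The plan is to mimic the proof of Theorem \ref{thm:jmainGamma} (the single-pattern end-cluster case) verbatim, making only the cosmetic substitution of the single-pattern data $(\Gamma, m_\Gamma, x)$ by the sequence data $(\Gamma_1,\dots,\Gamma_s, (m_{\Gamma_1},\dots,m_{\Gamma_s}), (x_1,\dots,x_s))$ and swapping the roles of the first and last column so that we use generalized start-clusters rather than generalized end-clusters. Concretely, I would first replace each $x_i$ by $x_i+1$ in \eqref{eq:stprclusterjoint}, so that the left-hand side becomes $\sum_{n\geq 0}\frac{t^{i+kn}}{(i+kn)!}\sum_{F\in\mathcal{MP}^{i,0,k}_{i+kn,\Gamma_1,\dots,\Gamma_s,\mathscr R}}\prod_{i=1}^s x_i^{m_{\Gamma_i}(F)}$ by the standard inclusion-exclusion identity that was used in every earlier proof (the identity $\sum_F \prod_i (x_i+1)^{\Gamma_i\text{-mch}(F)} = \sum_{F \text{ marked}} \prod_i x_i^{m_{\Gamma_i}(F)}$, which holds because marking is an independent binary choice at each matching column for each pattern).

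The core combinatorial step is then the interpretation of the right-hand side. Expanding $\frac{1}{1-\sum_{n\geq 1}\frac{t^{kn}}{(kn)!}GC^{0,0,k}_{kn,\Gamma_1,\dots,\Gamma_s,\mathscr R}(x_1,\dots,x_s)}$ as $1+\sum_{m\geq 1}(\dots)^m$, multiplying by the start-cluster series, and extracting the coefficient of $\frac{t^{i+ks}}{(i+ks)!}$, I obtain a sum over: a choice of split $a+b=s$ with $a,b\geq 0$; a composition $a=b_1+\cdots+b_m$; a multinomial coefficient $\binom{i+ks}{i+ka,kb_1,\dots,kb_m}$ distributing $\{1,\dots,i+ks\}$ into a block $S_0$ of size $i+ka$ and blocks $S_1,\dots,S_m$ of sizes $kb_1,\dots,kb_m$; a generalized start-cluster weighted by $GSC^{i,0,k}_{i+ka,\dots}$ sitting on $S_0$; and generalized joint clusters weighted by $GC^{0,0,k}_{kb_j,\dots}$ sitting on $S_1,\dots,S_m$. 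Concatenating the relabelled pieces produces an element of a set $\mathcal{HGSC}_{i+ks,\Gamma_1,\dots,\Gamma_s,\mathscr R}$ of ``strings of generalized (start- then ordinary) joint clusters''. On this set I define the sign-reversing involution $\theta$ exactly as in the proof of Theorem \ref{thm:jmainGamma}: scan left to right for the first block-piece $E_i$ whose reduction has $\geq 2$ blocks (split it into its first block and the rest, the latter still being a start-cluster if $i=1$), or the first adjacency $(\last(B^{(i)}_1),\first(E_{i+1}))\notin\mathscr R$ with $E_i$ a single block (merge $E_i,E_{i+1}$). The fixed points are precisely the strings where each piece is a single block and every consecutive pair of columns in the whole string lies in $\mathscr R$, i.e. elements of $\mathcal{MP}^{i,0,k}_{i+ks,\Gamma_1,\dots,\Gamma_s,\mathscr R}$ decomposed into their maximal joint $\Gamma_1,\dots,\Gamma_s$-subclusters, which matches the left-hand side.

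The only genuinely new bookkeeping relative to the earlier proofs is the multivariate weight: a column may now carry several distinct marks $x_i$ simultaneously, and the joint-cluster condition in Definition (joint cluster) allows two consecutive marked columns to be ``linked'' by a match in any one of the $\Gamma_i$. I would point out that none of this interferes with $\theta$, since $\theta$ only manipulates block boundaries and never touches the marks inside a block, so $\theta$ preserves $\prod_i x_i^{m_{\Gamma_i}}$ up to the sign $(-1)$ coming from the change in block count; this is the same cancellation as before. I expect the main obstacle — really the only point requiring care — to be verifying that the maximal-joint-subcluster decomposition of a fixed point is well-defined and unique in the presence of overlapping matches from different $\Gamma_i$'s; this was already handled for ordinary fillings in the proof of Theorem \ref{thm:Gammacluster} and for joint clusters implicitly in Theorem \ref{thm:jointcluster}, so I would simply cite that argument rather than repeat it. Since the proof is thus a routine combination of the proof of Theorem \ref{thm:jmainGamma} (with start in place of end) and the multivariate bookkeeping of Theorem \ref{thm:gjointcluster}, as the authors note for the non-joint start-cluster theorem, I would omit the full details and state only the above outline.
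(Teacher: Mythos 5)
Your proposal is correct and follows essentially the same route the paper intends: the authors explicitly state that the multivariate theorems are proved exactly as their single-pattern counterparts (here Theorem \ref{thm:imainGamma}, itself a start-for-end variant of Theorem \ref{thm:jmainGamma}), and your outline reproduces that argument with the appropriate multivariate bookkeeping. Your added remark that $\theta$ never touches the marks inside a block, so the joint weights $\prod_i x_i^{m_{\Gamma_i}}$ are preserved up to sign, is precisely the observation that justifies the paper's claim that no new details are needed.
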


\begin{definition}
	We say that $Q \in \mathcal{MP}^{i,j,k}_{i+kn+j,\Gamma}$ is a 
	{\bf generalized joint $\Gamma_1, \ldots, \Gamma_s,\mathscr R$-start-end-cluster} if 
	we can write $Q=B_1B_2\cdots B_m$ where $B_i$ are blocks 
	of consecutive columns 
	in $Q$ such that  
	\begin{enumerate}
		\item $m\geq 2$,
		\item $B_1$ is a column of height $i$, 
		\item $B_m$ is a column of height $j$, 	
		\item for $1<i < m$, 
		either $B_i$ is a single column or $B_i$ consists of $r$-columns 
		where $r \geq 2$, $\red(B_i)$ is a joint 
$\Gamma_1, \ldots, \Gamma_s$-cluster in $\mathcal{MP}_{kr,\Gamma_1, 
\ldots, \Gamma_s}$, 
		and any pair of consecutive columns in $B_i$ are in $\mathscr R$ and
		\item for $1\leq i\leq m-1$, the pair $(\last(B_i),\first(B_{i+1}))$ is 
		not in $\mathscr R$.  
	\end{enumerate}
\end{definition}

Let $\mathcal{GSEC}^{i,j,k}_{i+kn+j,\Gamma_1, \ldots, \Gamma_s,\mathscr R}$ denote the set of all generalized $\Gamma_1, \ldots, \Gamma_s,\mathscr R$-start-end-clusters which have $n$ columns of height $k$ between a column of height $i$ and a column of height $j$. 
Given $Q =B_1 B_2 \ldots B_m \in \mathcal{GSEC}^{i,j,k}_{i+kn+j,\Gamma_1, \ldots, \Gamma_s,\mathscr R}$, we 
define the weight of $B_i$, $\omega_{\Gamma_1, \ldots, \Gamma_s,\mathscr R}(B_i)$, to be 1 if $B_i$ is a single column and 
$\prod_{j=1}^s x^{m_{\Gamma_j}(\red(B_i))}$ if $B_i$ is order isomorphic to a 
joint $\Gamma_1, \ldots, Gamma_s$-cluster. Then we 
define the weight of $Q$, $\omega_{\Gamma_1, \ldots, \Gamma_s,\mathscr R}(Q)$, to be 
$(-1)^{m-1}\prod_{i=1}^m \omega_{\Gamma_1, \ldots, \Gamma_s,\mathscr R}(B_i)$. We 
let 
\begin{equation}
GSEC^{i,j,k}_{kn+j,\Gamma_1, \ldots, \Gamma_s,\mathscr R}(x) = 
\sum_{Q \in \mathcal{GSEC}^{i,j,k}_{i+kn+j,\Gamma_1, \ldots, \Gamma_s,\mathscr R}} \omega_{\Gamma_1, \ldots, \Gamma_s,\mathscr R}(Q).
\end{equation}

\begin{theorem}\label{thm:ijmainGammajoint}
Let $\mathscr R$ be a binary relation on pairs of columns $(C,D)$  which are filled with integers which are increasing from bottom to top. Let $\Gamma_1, \ldots, \Gamma_s$ be a sequence of a sets of patterns such that $\Gamma_i \subseteq \mathcal{P}^{0,0,k}_{r_ik}$ where $r_i \geq 2$ for $i=1, \ldots, s$. Then 
\begin{multline}\label{eqn:seprclusterjoint}
	\sum_{n \geq 0} \frac{t^{i+kn+j}}{(i+kn+j)!} 
	\sum_{F \in \mathcal{P}^{i,j,k}_{i+kn+j,\mathscr R}} 
\prod_{i=1}^s x_i^{\Gamma_i\text{-}\mathrm{mch}(F)} =  \\
	\frac{A(x_1-1, \ldots, x_s-1)B(x_1-1, \ldots, x_s-1)}{1-\sum_{n \geq 1} 
		\frac{t^{kn}}{(kn)!} GC^{0,0,k}_{kn,\Gamma_1, \ldots, \Gamma_s,\mathscr R}(x_1-1,\ldots,x_s-1)}\\
	+\sum_{n \geq 0} 
	\frac{t^{i+kn+j}}{(i+kn+j)!} GSEC^{i,j,k}_{i+kn+j,\Gamma_1, \ldots, \Gamma_s,\mathscr R}(x_1-1,\ldots,x_s -1),
\end{multline}
where 
$$A(x_1-1, \ldots, x_s-1) = \sum_{n \geq 0} 
		\frac{t^{i+kn}}{(i+kn)!} GSC^{i,0,k}_{i+kn,\Gamma_1, \ldots, \Gamma_s,\mathscr R}(x_1-1,\ldots,x_s-1)$$
and 
$$B(x_1-1, \ldots, x_s-1) = \sum_{n \geq 0} 
		\frac{t^{kn+j}}{(kn+j)!} GEC^{0,j,k}_{kn+j,\Gamma_1, \ldots, \Gamma_s,\mathscr R}(x_1-1, \ldots, x_s-1). $$
\end{theorem}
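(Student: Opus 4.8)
The plan is to imitate the proof of Theorem~\ref{thm:ijmainGamma}, replacing the single statistic $\Gmch$ by the vector $(\Gamma_1\text{-mch},\ldots,\Gamma_s\text{-mch})$ and the single bookkeeping variable by $(x_1,\ldots,x_s)$. First I would substitute $x_i\mapsto x_i+1$ for every $i$ in \eqref{eqn:seprclusterjoint}. Expanding $\prod_{i}(x_i+1)^{\Gamma_i\text{-mch}(F)}$ and summing over $F\in\mathcal{P}^{i,j,k}_{i+kn+j,\mathscr R}$ shows, exactly as in \eqref{eqn:seprcluster2}, that the left-hand side becomes $\sum_{n\geq 0}\frac{t^{i+kn+j}}{(i+kn+j)!}\sum_{F}\prod_{i}x_i^{m_{\Gamma_i}(F)}$, where $F$ ranges over the $\mathscr R$-restricted version of $\mathcal{MP}^{i,j,k}_{i+kn+j,\Gamma_1,\ldots,\Gamma_s}$ and the passage from $\Gmch$-style sums to marked fillings is the usual binomial bookkeeping, now carried out independently over each $i$ (a column that starts a $\Gamma_i$-match may be marked with $x_i$ or not, and the marks for different $i$ are chosen independently). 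So it suffices to prove the analogue of \eqref{eqn:seprcluster3} with $x_i$ in place of $x_i-1$.

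Next I would expand $1/(1-\sum_{n\geq 1}\frac{t^{kn}}{(kn)!}GC^{0,0,k}_{kn,\Gamma_1,\ldots,\Gamma_s,\mathscr R}(x_1,\ldots,x_s))$ as a geometric series and extract the coefficient of $\frac{t^{i+ks+j}}{(i+ks+j)!}$, obtaining, just as in \eqref{eqn:seprcluster4}, a sum of two kinds of terms: (a) a choice of $a+b+c=s$, a composition $b_1+\cdots+b_m=b$, a multinomial coefficient $\binom{i+ks+j}{i+ka,kb_1,\ldots,kb_m,kc+j}$ that partitions $\{1,\ldots,i+ks+j\}$ into ordered blocks, together with a generalized joint $\Gamma_1,\ldots,\Gamma_s,\mathscr R$-start-cluster on the first block, generalized joint clusters on the middle blocks, and a generalized joint end-cluster on the last block, each filled by the order-isomorphism to the chosen abstract cluster; and (b) a single generalized joint $\Gamma_1,\ldots,\Gamma_s,\mathscr R$-start-end-cluster of size $i+ks+j$. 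Concatenating the filled blocks in case (a) gives a set $\mathcal{HGC}$, case (b) gives a set $\mathcal{HGSEC}$, and with $\mathcal{H}:=\mathcal{HGC}\cup\mathcal{HGSEC}$ the right-hand side equals $\sum_{Q\in\mathcal{H}}w(Q)$ with the product weight induced by the block weights.

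Then I would define the sign-reversing involution $\theta$ on $\mathcal{H}$ verbatim from the proof of Theorem~\ref{thm:ijmainGamma}: scan the components of $Q$ from the left for the first place where one can either split a component whose reduction decomposes into more than one block, or merge two adjacent components $E,E'$ with $(\last(E),\first(E'))\notin\mathscr R$ — the $\mathcal{HGSEC}$ term being exactly what makes the merge of a height-$i$ column with a following end-cluster legal. One checks that $\theta$ is an involution, that $\theta(Q)\neq Q$ forces $w(\theta(Q))=-w(Q)$ (the $(-1)^{m-1}$ signs differ by one while the products $\prod_{j}x_j^{m_{\Gamma_j}}$ are preserved), and that the fixed points are precisely the elements of the $\mathscr R$-restricted $\mathcal{MP}^{i,j,k}_{i+ks+j,\Gamma_1,\ldots,\Gamma_s}$, chopped at the boundaries of their maximal joint $\Gamma_1,\ldots,\Gamma_s$-subclusters; conversely any such marked filling, so chopped, is a fixed point. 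Summing $w$ over the fixed points gives the claimed identity.

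The main obstacle I anticipate is making the mark-bookkeeping precise in the joint setting: unlike the single-pattern case, a column may carry several of the marks $x_1,\ldots,x_s$, so I must verify that the notion of \emph{maximal joint $\Gamma_1,\ldots,\Gamma_s$-subcluster} is well defined --- essentially that the ``overlapping marked matches'' relation partitions the marked columns into intervals, with unmarked columns appearing as singleton blocks in between --- so that the canonical chopping of a fixed point is unambiguous. Once this is established, the facts that splitting a joint cluster at a block boundary again yields a joint cluster, that merging across a non-$\mathscr R$ boundary does likewise, and that all weights are products over the $s$ variables left unchanged by $\theta$, go through exactly as in the single-$\Gamma$ proofs of Theorems~\ref{thm:mainGamma}, \ref{thm:jmainGamma}, \ref{thm:imainGamma}, and \ref{thm:ijmainGamma}.
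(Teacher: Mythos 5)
Your proposal is correct and follows exactly the route the paper intends: the paper gives no separate proof of this theorem, stating only that the multivariate analogues are proved just like their single-pattern counterparts, and your argument is precisely the proof of Theorem \ref{thm:ijmainGamma} (substitution $x_i\mapsto x_i+1$, coefficient extraction, the two-part interpretation via $\mathcal{HGC}\cup\mathcal{HGSEC}$, the sign-reversing involution $\theta$, and identification of fixed points with $\mathscr R$-restricted marked fillings) carried over to the joint setting. Your added remark on verifying that maximal joint $\Gamma_1,\ldots,\Gamma_s$-subclusters give a well-defined chopping is a worthwhile point that the paper leaves implicit, and it is handled correctly.
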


\subsubsection{Example}

In this section, we shall consider joint 
distribution of $\tau=1423$ and $\pi=162534$ in 
up-down permutations. That is, we shall 
 compute  the generating function
\begin{equation}\label{eqn:ex_joint_ud}
F_{\tau,\pi}(x,y,t):=
1+ \sum_{j=0}^1 
\sum_{n\geq 1}\frac{t^{2n+j}}{(2n+j)!}
\sum_{\sg\in \mathcal{E}^{0,j,2}_{2n+j}}x^{\tau\text{-mch}(\sg)}y^{\pi\text{-mch}(\sg)}t^n.
\end{equation}

As in the previous sections, we can think of up-down permutations as arrays 
as in Figure \ref{fig:ud9_wF}. 

\fig{1.2}{ud9_wF}{$F\in\mathcal{P}_{9,\mathscr R}^{0,1,2}$ and $w(F)\in\mathcal{UD}_{9}$.}

We let $\mathscr R$ be the binary relation such that $\mathscr R$ 
holds for a pair of columns $(C,D)$ if and only if the top element of column $C$ is greater than the bottom element of column $D$. 
Then 
\begin{eqnarray*}
\mathcal{E}^{0,0,2}_{2n} &=& \sum_{F \in \mathcal{P}_{2n,\mathscr R}^{0,0,2}}
w(F) \ \mbox{and} \\
 \mathcal{E}^{0,2,2}_{2n+1} &=& \sum_{F \in \mathcal{P}_{2n+1,\mathscr R}^{0,1,2}} w(F).
\end{eqnarray*} 
In particular, $\tau=1423 = w(P)$ and $\pi=162534 =w(Q)$ where 
$P=\begin{array}{|c|c|}
\hline 4 & 3 \\
\hline 1 & 2 \\
\hline
\end{array}$  and $Q=\begin{array}{|c|c|c|}
\hline 6&5&4 \\
\hline 1 & 2 &3\\
\hline
\end{array}$.

Thus the generating function $F_{\tau,\pi}(x,y,t)$ in (\ref{eqn:ex_joint_ud}) is equivalent to 
\begin{multline}\label{eqn:ex_joint_ud_array}
F_{P,Q}(x,y,t):=\\
\sum_{n\geq 0}\left(\frac{t^{2n}}{(2n)!}\sum_{F\in \mathcal{P}_{2n,\mathscr R}^{0,0,2}}x^{P\text{-mch}(F)}y^{Q\text{-mch}(F)}+\frac{t^{2n+1}}{(2n+1)!}\sum_{F\in \mathcal{P}_{2n+1,\mathscr R}^{0,1,2}}x^{P\text{-mch}(F)}y^{Q\text{-mch}(F)}\right).
\end{multline}

Thus we must consider the following two generating functions
\begin{equation}
	A_{P,Q}(x,y,t):=\sum_{n\geq 0}\frac{t^{2n}}{(2n)!}\sum_{F\in\mathcal{P}_{2n,\mathscr R}^{0,0,2}}x^{P\text{-mch}(F)}y^{Q\text{-mch}(F)},
\end{equation}
and
\begin{equation}
	B_{P,Q}(x,y,t):=\sum_{n\geq 0}\frac{t^{2n+1}}{(2n+1)!}\sum_{F\in\mathcal{P}_{2n+1,\mathscr R}^{0,1,2}}x^{P\text{-mch}(F)}y^{Q\text{-mch}(F)}
\end{equation}
because $F_{P,Q}(x,y,t)=A_{P,Q}(x,y,t)+B_{P,Q}(x,y,t)$.

We shall compute $A_{P,Q}(x,y,t)$ first. By Theorem \ref{thm:gjointcluster}, we have
\begin{equation}\label{eqn:joint_ud_even}
A_{P,Q}(x,y,t)=\frac{1}{1-\sum_{n\geq 1}\frac{t^{2n}}{(2n)!}GC_{2n,P,Q,\mathscr R}^{0,0,2}(x-1,y-1)}.
\end{equation}

We shall start by discussing structures of joint $(P,Q)$-clusters. 
It is easy to see that  
 the Hasse diagram of a joint $(P,Q)$-clusters is of the form 
pictured in Figure \ref{fig:pcluster_cp}. 

\fig{1.40}{pcluster_cp}{The Hasse diagram of a joint $(P,Q)$-cluster.} 

Let $\mathcal{CM}_{2n,P,Q}$ be the set of joint $(P,Q)$-clusters consisting of two rows and $n$ columns. Let $C_{2n,P,Q}(x,y)$ denote the joint cluster polynomial. We can 
use the exact same reasoning that we used to compute 
$C^{0,0,2}_{2n,P_{2,2},P_{2,3}}(x,y)$ to prove that 
\begin{eqnarray*}
C_{2,P,Q}(x,y)&=& 1, \\	
C_{4,P,Q}(x,y)&=& x, \mbox{and} \\
C_{2n,P,Q}(x,y)&=&(x+1)yC_{2n-4,P,Q}(x,y)+(xy+x+y)C_{2n-2,P,Q}(x,y),~~\text{for }n\geq 3.
\end{eqnarray*}

Next we shall compute $GC^{0,0,2}_{2n,P,Q,\mathscr R}(x,y)$. In fact our computation is very similar to the computation of $GC^{0,0,2}_{2n,P}(x)$ in Section \ref{sec:ud_in_du}.

Suppose $J\in\mathcal{GC}_{2n,P,Q,\mathscr R}^{0,0,2}$ has $m$ blocks, i.e., $J=B_1B_2\ldots B_m$. For an array $F$, we let $\col(F)$ to denote the number of columns in $\col(F)$. Each $B_i$ is either order isomorphic to a joint $(P,Q)$-cluster or a single column.  We let $\mathcal{GC}_{\col = (b_1,b_2,\ldots,b_m)}$ to denote the set of generalized clusters such that $\col(B_i)=b_i$. Given $\col(B_i)$ for each $1\leq i\leq m$, we can represent the fillings for the set of such generalized clusters as the set of linear extensions 
of a poset whose Hasse diagram corresponds $(b_1, \ldots, b_m)$. That is, 
for any block $(\last(B_i),\first(B_{i+1})$ is not in $\mathscr R$ so 
that there must be an arrow directed from the top of the last column of $B_i$ to the 
bottom of the first column of $B_{i+1}$. We let $\Gamma(b_1,b_2,\ldots,b_m)$ denote the Hasse diagram corresponding to $\mathcal{GC}_{\col = (b_1,b_2,\ldots,b_m)}$. For example, $\Gamma (3,1,1,5,1)$ is pictured in Figure \ref{fig:updown_gc_ex}.

\fig{1.40}{updown_gc_ex}{$\Gamma (3,1,1,5,1)$.}

These are 
the same type of Hasse diagrams that we saw in Section \ref{sec:ud_in_du}.
Then using the same reasoning that we used in Section \ref{sec:ud_in_du}, 
it follows that 
\begin{equation}\label{eqn:joint_up_gc_compute}
\sum_{J\in\mathcal{GC}_{\col = (b_1,b_2,\ldots,b_m)}}\omega_{P,Q,\mathscr R}(J)=(-1)^{m-1}\text{LE}(\Gamma(b_1,b_2,\ldots,b_m))\prod_{i=1}^m C_{2b_i,P,Q}(x,y),
\end{equation}
where $\text{LE}(\Gamma(b_1,b_2,\ldots,b_m))$ is the number of linear extensions of $\Gamma(b_1,b_2,\ldots,b_m)$ and by convention, we let $C_{2}(x)=1$. Therefore, to compute (\ref{eqn:du_gc_compute}), we only need to count linear extensions of $\Gamma(b_1,b_2,\ldots,b_m)$. Continue using $\Gamma (3,1,1,5,1)$ as example, the Hasse diagram in  Figure \ref{fig:updown_gc_ex} is actually a tree-like diagram, as drawn in Figure \ref{fig:downup_gc_ex_str}. 
Hence  
$$
\text{LE}(\Gamma (3,1,1,5,1))=\binom{6}{4}\binom{18}{2}.
$$
The only difference between the generalized clusters 
considered in Section \ref{sec:ud_in_du} and our current situation 
is that the blocks with more than one column are weighted as 
joint $P,Q$-clusters.

Using a computer program, we computed that 
\begin{eqnarray}\label{eqn:ud_gc_list}
GC_{2,P,Q,\mathscr R}^{0,0,2}(x,y)&=&1\nonumber\\
GC_{4,P,Q,\mathscr R}^{0,0,2}(x,y)&=&x-1\nonumber\\
GC_{6,P,Q,\mathscr R}^{0,0,2}(x,y)&=&\begin{pmatrix}
1\\
x\\
x^2
\end{pmatrix}^T\begin{pmatrix}
1&1\\
-4&2\\
1&1
\end{pmatrix}\begin{pmatrix}
1\\
y
\end{pmatrix}\nonumber\\
GC_{8,P,Q,\mathscr R}^{0,0,2}(x,y)&=&\begin{pmatrix}
1\\
x\\
x^2\\
x^3
\end{pmatrix}^T\begin{pmatrix}
-1& -7& 1\\
9&-12&3\\
-12&-3&3\\
1&2&1
\end{pmatrix}\begin{pmatrix}
1\\
y\\
y^2
\end{pmatrix}
\nonumber\\
GC_{10,P,Q,\mathscr R}^{0,0,2}(x,y)&=&\begin{pmatrix}
1\\
x\\
x^2\\
x^3\\
x^4
\end{pmatrix}^T\begin{pmatrix}
1& 22& -10& 1\\ 
-16& 0& -27& 4\\
 55 & -63& -21& 6\\
  -33& -38& -1& 4\\ 
  1& 3& 3& 1
\end{pmatrix}\begin{pmatrix}
1\\
y\\
y^2\\
y^3
\end{pmatrix}\nonumber\\
GC_{12,P,Q,\mathscr R}^{0,0,2}(x,y)&=&
\begin{pmatrix}
1\\
x\\
x^2\\
x^3\\
x^4\\
x^5
\end{pmatrix}^T\begin{pmatrix}
-1 & -50 & 2 & -14 & 1\\ 
25 & 189 & -111 & -52 & 5\\
-164 & 336 & -339 & -68 & 10\\
288 & -91 & -331 & -32 & 10\\
-88 & -184 & -99 & 2 & 5\\
1 & 4 & 6 & 4 & 1
\end{pmatrix}\begin{pmatrix}
1\\
y\\
y^2\\
y^3\\
y^4
\end{pmatrix}
\nonumber\\
&\cdots&
\end{eqnarray}

Using these initial terms, we can 
use our formula for $GC_{2n,P,Q,\mathscr R}(x,y,t)$ given in  
(\ref{eqn:joint_ud_even}) to compute that  
\begin{eqnarray*}
&&A_{P,Q}(x,y,t)\\
&=&1+\frac{t^2}{2!}+(4+x)\frac{t^4}{4!}+\left(36+24x+x^2y\right)\frac{t^6}{6!}+\left(593+680 x+64 x^2+47 x^2 y+x^3 y^2\right)\frac{t^8}{8!}\\
&&+\left(15676 + 25691 x + 6481 x^2 + 2199 x^2 y + 396 x^3 y + 77 x^3 y^2 + 
x^4 y^3\right)\frac{t^{10}}{10!}+\cdots.
\end{eqnarray*}

Next we shall compute $B_{P,Q}(x,y,t)$. By Theorem \ref{thm:jmainGammajoint},
\begin{equation}\label{eqn:joint_up_odd}
	B_{P,Q}(x,y,t)=
	\frac{\sum_{n \geq 0} 
		\frac{t^{2n+1}}{(2n+1)!} GEC^{0,1,2}_{2n+1,P,Q,\mathscr R}(x-1, y-1)}{1-\sum_{n \geq 1} 
		\frac{t^{2n}}{(2n)!} GC^{0,0,2}_{2n,P,Q,\mathscr R}
		(x-1,y-1)}.
\end{equation}

Again, the computation of joint generalized end-cluster polynomials $GEC_{2n+1,P,Q,\mathscr R}^{0,1,2}(x,y,t)$ is the same as the computation of generalized end-cluster polynomials in Section \ref{sec:ud_in_du}. That is,  
the only difference between the generalized clusters 
considered in Section \ref{sec:ud_in_du} and our current situation 
is that the blocks with more than one column are weighted as 
joint $P,Q$-clusters. 

Using a computer program, we computed that 
\begin{eqnarray}\label{eqn:ud_gec_list}
GEC_{1,P,Q,\mathscr R}^{0,1,2}(x,y)&=&1\nonumber\\
GEC_{3,P,Q,\mathscr R}^{0,1,2}(x,y)&=&-1\nonumber\\
GEC_{5,P,Q,\mathscr R}^{0,1,2}(x,y)&=&-2x+1\nonumber\\
GEC_{7,P,Q,\mathscr R}^{0,1,2}(x,y)&=&\begin{pmatrix}
1\\
x\\
x^2
\end{pmatrix}^T\begin{pmatrix}
-1&-3\\
6&-6\\
-3&-3
\end{pmatrix}\begin{pmatrix}
1\\
y
\end{pmatrix}\nonumber\\
GEC_{9,P,Q,\mathscr R}^{0,1,2}(x,y)&=&\begin{pmatrix}
1\\
x\\
x^2\\
x^3
\end{pmatrix}^T\begin{pmatrix}
1& 13& -4\\
-12&18&-12\\
25&-3&-12\\
-4&-8&-4
\end{pmatrix}\begin{pmatrix}
1\\
y\\
y^2
\end{pmatrix}
\nonumber\\
GEC_{11,P,Q,\mathscr R}^{0,1,2}(x,y)&=&\begin{pmatrix}
1\\
x\\
x^2\\
x^3\\
x^4
\end{pmatrix}^T\begin{pmatrix}
-1 & -34 & 19 & -5\\ 
20 & 46 & 42 & -20\\
-94 & 179 & 12 & -30\\
90 & 84 & -26 & -20\\ 
-5 & -15 & -15 & -5
\end{pmatrix}\begin{pmatrix}
1\\
y\\
y^2\\
y^3
\end{pmatrix}\nonumber\\
GEC_{13,P,Q,\mathscr R}^{0,1,2}(x,y)&=&
\begin{pmatrix}
1\\
x\\
x^2\\
x^3\\
x^4\\
x^5
\end{pmatrix}^T\begin{pmatrix}
1 & 70 & 68 & 28 & -6\\ 
-30 & -432 & 566 & 88 & -30\\
250 & -434 & 1254 & 72 & -60\\
-612 & 684 & 1046 & -32 & -60\\
300 & 592 & 254 & -68 & -30\\
-6 & -24 & -36 & -24 & -6
\end{pmatrix}\begin{pmatrix}
1\\
y\\
y^2\\
y^3\\
y^4
\end{pmatrix}
\nonumber\\
&\cdots&
\end{eqnarray}

Using these initial terms, we can use our formula 
for $GEC^{0,1,2}_{2n+1,P,Q,\mathscr R}(x,y,t)$ given in  (\ref{eqn:joint_up_odd}) to compute that 
\begin{eqnarray*}
	&&B_{P,Q}(x,y,t)=\\
	&&t+2\frac{t^3}{3!}+(13+3x)\frac{t^5}{5!}+\left(165 + 103 x + 4 x^2 y\right)\frac{t^7}{7!}+\left(3507 + 3832 x + 340 x^2 + 252 x^2 y + 5 x^3 y^2\right)\frac{t^9}{9!}\\
	&&+\left(113597 + 178871 x + 43395 x^2 + 15015 x^2 y + 2442 x^3 y + 
	462 x^3 y^2 + 11 x^4 y^3\right)\frac{t^{11}}{11!}+\cdots.
\end{eqnarray*}

Finally, taking sum of $A_{P,Q}(x,y,t)$ and $B_{P,Q}(x,y,t)$, we get the generating function for joint distribution of patterns $1423$ and $162534$ in up-down permutations,
\begin{eqnarray*}
	&&1+t+\frac{t^2}{2!}+2\frac{t^3}{3!}+(4+x)\frac{t^4}{4!}+(13+3x)\frac{t^5}{5!}+\left(36+24x+x^2y\right)\frac{t^6}{6!}+\left(165 + 103 x + 4 x^2 y\right)\frac{t^7}{7!}\\
	&+&\left(593+680 x+64 x^2+47 x^2 y+x^3 y^2\right)\frac{t^8}{8!}+\left(3507 + 3832 x + 340 x^2 + 252 x^2 y + 5 x^3 y^2\right)\frac{t^9}{9!}\\
	&+&\left(15676 + 25691 x + 6481 x^2 + 2199 x^2 y + 396 x^3 y + 77 x^3 y^2 + 
	x^4 y^3\right)\frac{t^{10}}{10!}+\cdots
\end{eqnarray*}

\end{document}